\newtheorem{lem}{Lemma}[section]
\newtheorem{thm}[lem]{Theorem}
\newtheorem{pro}[lem]{Proposition}
\newtheorem{rem}[lem]{Remark}
\newcommand{\bqn}{\begin{equation}}
\newcommand{\eqn}{\end{equation}}
\newcommand{\beqx}{\begin{equation*}}
\newcommand{\eeqx}{\end{equation*}}
\newcommand{\barr}{\begin{array}}
\newcommand{\earr}{\end{array}}
\newcommand{\beqn}{\begin{eqnarray}}
\newcommand{\eeqn}{\end{eqnarray}}
\newcommand{\beqnx}{\begin{eqnarray*}}
\newcommand{\eeqnx}{\end{eqnarray*}}
\newcommand{\bmt}{\begin{multline}}
\newcommand{\emt}{\end{multline}}
\numberwithin{equation}{section}
\newcommand{\D}{\partial}
\newcommand{\al}{\alpha}
\newcommand{\ga}{{\gamma}}
\newcommand{\de}{\delta}
\newcommand{\la}{\lambda}
\newcommand{\ro}{\rho}
\newcommand{\Om}{\Omega}
\newcommand{\tu}{\mbox{$\tilde u$}}
\newcommand{\tv}{\mbox{$\tilde v$}}
\newcommand{\tV}{\mbox{$\tilde V$}}
\newcommand{\tro}{\mbox{$\tilde \ro$}}
\newcommand{\tphi}{\mbox{$\tilde \phi$}}
\newcommand{\er}{\eqref}
\newcommand{\lb}{\label}
\newcommand{\qu}{\quad}
\begin{document}


\title{Stability and existence of stationary solutions to \\
the Euler--Poisson equations \\
in a domain with a curved boundary}

\author{Masahiro Suzuki${}^1$ and Masahiro Takayama${}^2$}

\date{%
\normalsize
${}^1$%
Department of Computer Science and Engineering, 
Nagoya Institute of Technology,
\\
Gokiso-cho, Showa-ku, Nagoya, 466-8555, Japan
\\ [7pt]
${}^2$%
Department of Mathematics, 
Keio University, 
\\
Hiyoshi, Kohoku-ku, Yokohama, 223-8522, Japan
}

\maketitle

\begin{abstract}
The purpose of this paper is to mathematically investigate the formation of 
a plasma sheath near the surface of walls immersed in a plasma, 
and to analyze qualitative information of such a sheath layer.
In the case of planar wall, Bohm proposed a criterion 
on the velocity of the positive ion for the formation of sheath,
and several works gave its mathematical validation.
It is of more interest to analyze the criterion for the nonplanar wall.
In this paper, we study the existence and asymptotic stability of stationary solutions 
for the Euler-Poisson equations in a domain of which boundary is drawn by a graph.
The existence and stability theorems are shown by assuming that
the velocity of the positive ion satisfies the Bohm criterion at infinite distance.
What most interests us in these theorems is that
the criterion together with a suitable necessary condition
guarantees the formation of sheaths 
as long as the shape of walls is drawn by a graph.
\end{abstract}

\begin{description}
\item[{\it Keywords:}]
plasma;
sheath;
Bohm criterion;
initial--boundary value problem;
long-time behavior;
convergence rate.

\item[{\it 2010 Mathematics Subject Classification:}]
82D10; 
35M12; 
35M13; 
35A01; 
35B35. 
\end{description}


\newpage

\section{Introduction}\lb{S1}

The purpose of this paper is to mathematically investigate the formation of 
a plasma boundary layer, called as a \emph{sheath}, 
near the surface of materials immersed in a plasma, 
and to analyze qualitative information of such a layer.
The sheath appears when a material is surrounded
by a plasma and the plasma contacts with its surface.
Because the thermal velocities of electrons are much higher than those of ions, 
more electrons tend to hit the material compared with ions.
This makes the material negatively charged with respect to the surrounding plasma. 
Then the material with a negative potential attracts and accelerates ions toward the surface, 
while repelling electrons away from it. 
Eventually, there appears a non-neutral potential region near the surface, 
where a nontrivial equilibrium of the densities is achieved. 
This non-neutral region is referred as to the sheath. 
This layer shields the plasma from the negatively charged body, 
and the thickness is the same order of the Debye length.
For more details of  physicality of the sheath development, 
we refer the reader to \cite{D.B.1, F.C.1,I.L.1, LL, K.R.1, K.R.2}.

For the formation of sheath, Langmuir in \cite{I.L.1} observed that 
positive ions must enter the sheath region with a sufficiently large kinetic energy. 
Bohm in \cite{D.B.1} proposed the original \emph{Bohm criterion} for the plasma 
containing electrons and only one component of mono-valence ions, 
which states that the ion velocity at the plasma edge must exceed the ion acoustic speed,
in the case of planar wall.
Mathematically, the planar wall cases have been investigated 
by using the Euler--Poisson equations \er{eq1}--\er{eq3} below.
Ambroso, M\'ehats, and Raviart did a pioneering work \cite{AMR} where
the unique existence of monotone stationary solutions was proved
over a bounded interval, provided that the Bohm criterion holds.
Furthermore, Ambroso \cite{A.A.} numerically checked that
solutions of initial--boundary value problems
approach the stationary solutions constructed in \cite{AMR}
as the time variable becomes large.
Suzuki \cite{M.S.1} derived a necessary and sufficient condition, including the Bohm criterion,
for the unique existence of monotone stationary solutions over a half space.
Furthermore, he showed the asymptotic stability of stationary solutions 
by assuming a condition slightly stronger than the criterion.
After that, the stability theorem was shown under the Bohm criterion in \cite{NOS}.
For a multicomponent plasma containing electrons and several components of ions,
similar results to \cite{NOS,M.S.1} were obtained in \cite{M.S.2} 
under the generalized Bohm criterion derived by Riemann in \cite{K.R.2}.
These results validated mathematically the Bohm criterion
and defined the fact that the sheath corresponds to the stationary solution.
Let us also mention the results on the quasi-neutral limit problem
as letting the Debye length in the Euler-Poisson equations tend to zero.
G\'erard-Varet, Han-Kwan, and Rousset in \cite{GHR1,GHR2}
studied the problems over the half space with various boundary conditions.
In particular, the result in \cite{GHR2} clarified the fact that
the thickness of the boundary layer is of order of the Debye length.
We also introduce a couple of results studying the equations over the whole space.
The time-global solvability and quasi-neutral limit problem
were investigated in \cite{GP1} and \cite{CG1}, respectively.
The traveling wave solutions were established in \cite{CDMS2}.

For the planer wall cases, 
the formation of the sheath has been well-understood as above.
We are now interested in the cases that walls are nonplanar.
For this direction, Jung, Kwon, and Suzuki in \cite{JKS1} studied
the existence and quasi-neutral limit of stationary solutions over an annulus.
They focused only on spherical symmetry solutions
and then proposed {\it a Bohm criterion for the annulus},
which essentially differs from the original Bohm criterion.
It is of interest to know how the Bohm criterion depends on the shape of walls.
The main purpose of this paper is 
to analyze the sheath formation for more general domains.
In this situation, the plasma no longer flows unidirectionally,
although the above results studied only unidirectional flows.
We remark that few mathematical studies have been reported on
steady states having multidirectional flows for compressible fluids.

After a suitable nondimensionalization, 
the Euler-Poisson equations is written by
\begin{subequations}\label{eq0}
   \begin{gather}
   \ro_t+\nabla \cdot(\ro \bm{u})=0,
   \label{eq1}\\
   \bm{u}_t +  \left( \bm{u} \cdot \nabla \right) \bm{u}
    + K \nabla (\log \ro) = \nabla \phi,
   \label{eq2}\\
   \Delta \phi=\ro-e^{-\phi},
   \label{eq3}
   \end{gather}
where unknown functions
$\rho$, $\bm{u}=(u_1,u_2,u_3)$, and $-\phi$
represent the density and velocity of the positive ions and
the electrostatic potential, respectively.
Furthermore, $K$ is a positive constant.
The first equation is the conservation of mass, and the second one
is the equation of momentum in which the pressure gradient 
and electrostatic potential gradient as well as the convection effect are taken into account.
The third equation is the Poisson equation,
which governs the relation between the potential and the density of charged particles.
It is obtained by assuming the Boltzmann relation in which the electron density is 
given by $\ro_e=e^{-\phi}$.
We study an initial--boundary value problem of \er{eq0}
in a domain 
\[
\Omega:=\{x=(x_1,x_2,x_3)\in \mathbb R^3 \, | 
\, x_1>M(x_2,x_3)\}
\quad \text{for $M \in \cap_{k=1}^\infty H^k(\mathbb R^2)$}.
\]
The initial and boundary data are prescribed as
 \begin{gather}
  (\rho,\bm{u})(0,x)=(\rho_0,\bm{u}_0)(x), 
  \lb{ini1}
  \\
\lim_{x_1\to\infty}(\ro,u_1,u_2,u_3,\phi)(t,x_1,x_2,x_3)=(1,u_+,0,0,0),
 \label{bc1}
 \\
  \phi(t,M(x_2,x_3),x_2,x_3)=\phi_b \quad \text{for} \
  (x_2,x_3)\in \mathbb R^2,
  \label{bc2}
 \end{gather}
 \end{subequations}
where $u_+<0$ and $\phi_b\in \mathbb R$ are constants.
The unit outer normal vector of the boundary 
$\D\Om=\{x\in \mathbb R^3 \, | \, x_1=M(x_2,x_3)\}$ is represented by 
\begin{equation*}
\bm{n}(x_2,x_3)=(n_1,n_2,n_3)(x_2,x_3)
:=\left(\frac{-1}{\sqrt{1+|\nabla M|^2}},
\frac{\partial_{x_2}M}{\sqrt{1+|\nabla M|^2}},
\frac{\partial_{x_3}M}{\sqrt{1+|\nabla M|^2}} \right)(x_2,x_3).
\end{equation*}

We construct solutions in the region, where the following two conditions hold:
  \begin{gather}
  \inf_{x\in \Om}\ro(x)>0,
  \label{po1}
  \\
  \inf_{x \in \D\Om}
  \frac{\bm{u}(x)\cdot\nabla(M(x_2,x_3)-x_1)}{\sqrt{1+|\nabla M(x_2,x_3)|^2}}-\sqrt{K}>0,
  \lb{super1'}
  \end{gather}
by assuming the same conditions for the initial data $(\rho_0,\bm{u}_0)$:
\[
 \inf_{x\in \Om}\ro_0(x)>0, \quad 
  \inf_{x \in \D\Om}
  \frac{\bm{u}_0(x)\cdot\nabla(M(x_2,x_3)-x_1)}{\sqrt{1+|\nabla M(x_2,x_3)|^2}}-\sqrt{K}>0. 
\]
In particular, the supersonic outflow condition \er{super1'} is necessary for 
the well-posedness of this initial--boundary value problem,
because it guarantees that no boundary condition is suitable for equations \er{eq1} and \er{eq2}.
In this setting, we do not need any compability conditions.
For the end state of velocity $u_+$, 
we assume the Bohm criterion and the supersonic outflow condition:
\begin{gather}
u_+^2>K+1, \quad u_+<0,
\lb{Bohm1}\\
\inf_{x \in \D\Om}
\frac{-u_+}{\sqrt{1+|\nabla M(x_2,x_3)|^2}}-\sqrt{K}>0.
\lb{asp1'}
\end{gather}
We remark that \er{asp1'} is required
if solutions to problem \er{eq0} is established
in a neighborhood of the constant state 
$(\rho,u_1,u_2,u_3,\phi)=(1,u_+,0,0,0)$,
which is a trivial solution for the case $\phi_b=0$.

We study the existence and stability of stationary solutions
over the domain $\Om$ with the curved boundary.
The main difficulty lies on the fact that the stationary problem
is still given by a boundary value problem to a hyperbolic--elliptic system,
although the problem over a half space or an annulus can be reduced to 
a system of ordinary differential equations.
It is also worth to pointing out that the hyperbolic equations of 
the stationary problem over $\Om$ do not have any initial data and boundary conditions.
As this point, our problem differs from standard situations.
In addition, we do not assume the smallness of
the function $M$ representing the boundary $\D \Om$.

Let us discuss more details of the difficulty mentioned above and the strategies to resolve.
For the situation solving hyperbolic equations without initial and boundary data,
one may first think of the application of theorems in \cite{Fr1}, 
which discuss the solvability for the linear case, and then
linearize the Euler--Poisson equations so that
the hyperbolic and elliptic parts are decoupled.
However, the inductive scheme to solve the nonlinear problem does not work well for our situation.
Generally speaking, this scheme works for time-evolution problem 
by taking the time variable small enough.
We cannot find any alternative quantity to the time variable in the steady case.
For the same reason, the contraction mapping principle is also not useful.
Therefore, we must solve the stationary problem with a totally different approach.

Our approach is that we first show the time-global solvability of problem \er{eq0} and 
then construct stationary solutions by using the global solutions.
These procedures are in the reverse order to standard ways
in which a stationary solution is first constructed and then 
the time-global solvability is shown in the neighborhood of the stationary solution
by combining time-local solvability and an a priori estimate.
Let us explain the idea to have time-global solutions for unknown steady states.
For example, one can have a priori estimates of solutions of
some inhomogeneous parabolic equations over bounded domains
even if the long-time behavior of solutions is not anticipated (see \cite{KS1}).
The key of the proof is the dissipative structure which makes
solutions of the corresponding homogeneous equations decay
exponentially fast as time tends to infinity.
On the other hand, the stability theorems in \cite{NOS,M.S.1,M.S.2} imply that 
the solution $(\rho,u_1,u_2,u_3,\phi)$ to problem \er{eq0} with $\phi_b=0$
converges the constant state $(1,u_+,0,0,0)$ 
exponentially fast as time tends to infinity.
For the case $\phi_b\neq 0$, after suitable reformulation, 
all effects coming from $\phi_b\neq 0$ are represented by 
inhomogeneous terms in the equations.
Therefore, the dissipative structure enables us to obtain the a priori estimate
of solutions to our problem.
For the construction of stationary solutions, 
we define a sequence by the time-global solution sifted the time variable $t$
to $t+kT^*$ for any $T^*>0$ and $k \in \mathbb N$, and then show that 
this sequence converges a time-periodic solution with a period $T^*$ as $k$ tends to infinity. 
By using the arbitrary of period, it can be concluded that
this time-periodic solution is independent of $t$.
Before closing this section, we give our notation used throughout this paper.

%
The notation $\langle u,v \rangle$ means the inner product of $u,v \in \mathbb R^4$.
We use $c$ and $C$ to denote generic positive constants.
Let us also denote a generic positive constant depending additionally
on other parameters $\alpha$, $\beta$, $\ldots$
by $C[\alpha,\, \beta,\, \ldots]$. 
For a nonnegative integer $k$, ${\cal {B}}^k (\Sigma)$ stands for
the space of functions whose derivatives up to  
$k$-th order are continuous and bounded over $\Sigma$.
Furthermore, ${\cal {B}}^\infty (\Sigma)$ is defined by
$\cap_{k=1}^\infty {\cal {B}}^k (\Sigma)$.
For $ 1 \leq p \leq \infty$ and a nonnegative integer $k$, 
$L^p(\Omega)$ is the Lebesgue space;
$W^{k,p}(\Omega)$ is the $k$-th order Sobolev space in the $L^p$ sense;
$H^k(\Omega)$ is the $k$-th order Sobolev space in the $L^2$ sense, 
equipped with the norm $\Vert\cdot\Vert_k$.
We note $H^0 = L^2$, 
$\Vert\cdot\Vert:=\Vert\cdot\Vert_0$, and $H^\infty:=\cap_{k=1}^\infty H^k$.
We also define weighted Sobolev spaces $H^k_{\al}(\Om)$ and $H^k_{\al,\la} (\Om)$
for $\al > 0$ and $\la \geq 2$ by
\begin{gather*}
H^k_{\al} (\Om):=\left\{ f \in H^k(\Om) \,\left| \, 
\|f\|_{k,\al}^2=\sum_{j=0}^k\int_{\Om} e^{\al x_1} |\nabla^j f|^2\, dx < \infty \right\}\right.,
\\
H^k_{\al,\la} (\Om):=\left\{ f \in H^k(\Om) \,\left| \, 
\|f\|_{k,\al,\la}^2=\sum_{j=0}^k\int_{\Om} 
w_{\alpha,\la}|\nabla^j f|^2\, dx < \infty \right\}\right.,
\end{gather*}
where
\[
w_{\al,\la}(x_1)
:=\left(1+\min\left\{\al,(1+|M|_{L\infty(\mathbb R^2)})^{-1}\right\} x_1 \right)^\la.
\]
Note that there exist $c$ and $C$ independent of $\alpha$ such that
\begin{equation}\lb{eqiv0}
c\|f\|_{k,\al}\leq\|e^{\al x_1/2}f\|_k\leq C\|f\|_{k,\al}
\quad \text{for $f \in H^k_{\al} (\Om)$ and $\alpha \in (0,1]$}.
\end{equation}
The notation $C^k([0,T];{\mathcal H})$ means 
the space of $k$-times continuously
differentiable functions on the interval $[0,T]$ with values in 
some Hilbert space ${\mathcal H}$.

\section{Main results}\lb{S3/2}
Before mentioning our main results, we introduce a result in \cite{M.S.1}
which showed the unique existence of stationary solutions
over a one-dimensional half space $\mathbb R_+:=\{x_1>0\}$.
Stationary solutions $(\tro,\tu,\tphi)(x_1)$ solve the system
\begin{subequations}\lb{sp0}
\begin{gather}
   (\tro\tu)'=0,
   \lb{sp1}\\
   {\tu}\tu'+K(\log \tro)'={\tphi}',
   \lb{sp2}\\
   \tphi''=\tro-e^{-\tilde{\phi}}
   \lb{sp3}
   \end{gather}
with the conditions
\begin{gather}
  \inf_{x_1\in\mathbb R_{+}}\tro(x_1)>0, \qu
  \lim_{x_1\rightarrow \infty}(\tro,\tu,\tphi)(x_1)=(1,u_+,0), \qu
  \tphi(0)=\phi_b.
  \lb{sp4}
\end{gather}
\end{subequations}
Under the Bohm criterion \er{Bohm1},
the unique existence of stationary solutions $(\tro,\tu,\tphi)$ was established
as in the following lemma.
\begin{lem}[\cite{M.S.1}]\lb{1.1}
Let $u_+$ satisfy \er{Bohm1}. There exist a constant $\delta>0$ such that
if $|\phi_b| < \delta$, then problem \eqref{sp0}
has a unique monotone solution $(\tilde{\rho} ,\tilde{u}, \tilde{\phi}) \in 
{\cal B}^{\infty}(\overline{\mathbb{R}_{+}})$. Moreover, it satisfies
\begin{equation}\label{ses1}
|\partial_{x_1}^j(\tilde{\rho}-1)|
+|\partial_{x_1}^j(\tilde{u}-u_{+})|
+|\partial_{x_1}^j\tilde{\phi}|
 \leq C|\phi_b| e^{-\al x}
\quad \text{for} \quad 
j=0,1,2,\cdots, 
\end{equation}
where $\al<1$ and $C$ are positive constants independent of $\phi_b$.
\end{lem}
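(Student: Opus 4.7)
The plan is to reduce \er{sp0} to a single second-order ODE for $\tphi$ and then analyze it by a phase-plane (energy) argument. First, integrating \er{sp1} and applying the far-field condition in \er{sp4} gives $\tro\tu \equiv u_+$ on $\mathbb R_+$. Substituting $\tro = u_+/\tu$ into \er{sp2} and integrating once yields the Bernoulli-type relation
\[
\frac{\tu^2}{2} + K\log\frac{u_+}{\tu} = \tphi + \frac{u_+^2}{2}.
\]
Since $u_+^2 > K$ by \er{Bohm1}, the map $H(\tu) := \tu^2/2 + K\log(u_+/\tu)$ is strictly monotone in a neighborhood of $\tu = u_+$, so the implicit function theorem provides a smooth $\tu = U(\tphi)$ with $U(0) = u_+$ for $|\tphi|$ small. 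With $R(\tphi) := u_+/U(\tphi)$, equation \er{sp3} reduces to the scalar ODE $\tphi'' = F(\tphi)$, where $F(\tphi) := R(\tphi) - e^{-\tphi}$.

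Next, I would analyze $F$ near $\tphi = 0$. Differentiating the Bernoulli relation gives $U'(0) = u_+/(u_+^2 - K)$ and $R'(0) = -1/(u_+^2 - K)$, hence
\[
F(0) = 0, \qquad F'(0) = \frac{u_+^2 - K - 1}{u_+^2 - K} \in (0,1),
\]
where the positivity of $F'(0)$ is precisely the Bohm criterion \er{Bohm1}. Setting $\al := \sqrt{F'(0)} \in (0,1)$ and $V(\tphi) := \int_0^{\tphi} F(s)\,ds$, Taylor expansion gives $V(\tphi) = \tfrac12 \al^2 \tphi^2 + O(\tphi^3)$, which is strictly positive on a punctured neighborhood of $0$.

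Then I would invoke energy conservation. Multiplying $\tphi'' = F(\tphi)$ by $\tphi'$, integrating, and using $\tphi(\infty) = \tphi'(\infty) = 0$ yields $(\tphi')^2 = 2V(\tphi)$; monotonicity forces $\tphi' = -\sgn(\phi_b)\sqrt{2V(\tphi)}$, and separation of variables gives
\[
x_1 = -\sgn(\phi_b)\int_{\phi_b}^{\tphi(x_1)} \frac{ds}{\sqrt{2V(s)}}.
\]
For $|\phi_b| < \de$ with $\de$ small, $V > 0$ strictly between $\phi_b$ and $0$, so the right-hand side is a strictly monotone bijection from the interval with endpoints $\phi_b$ and $0$ onto $[0,\infty)$; this produces existence and uniqueness of a monotone $\tphi$, and then $\tu = U(\tphi)$, $\tro = R(\tphi)$ recover the full solution. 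The ${\cal B}^\infty$ regularity is obtained by bootstrapping $\tphi'' = F(\tphi)$. The exponential bound \er{ses1} follows from $(\tphi')^2 = \al^2\tphi^2(1 + O(\tphi))$: this gives $(\log|\tphi|)' = -\al(1+O(\tphi))^{1/2}$, which integrates to $|\tphi(x_1)| \leq C|\phi_b|e^{-\al x_1}$, and estimates for the derivatives and for $\tro - 1$, $\tu - u_+$ follow by differentiating the ODE and using the Lipschitz dependence of $R$, $U$ on $\tphi$.

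I expect the main obstacle to be confirming that the reduction $\tu = U(\tphi)$ is valid along the \emph{entire} trajectory, not just near $\tphi = 0$. The implicit function theorem only produces $U$ locally, and if $\tu$ ever reached $-\sqrt{K}$ the map $H$ would become noninvertible, signalling a sonic degeneracy. The smallness hypothesis $|\phi_b| < \de$ is precisely what keeps the whole orbit in the supersonic regime $\tu^2 > K$, so that $U$ is globally single-valued on the relevant range of $\tphi$ and the quadrature integral $\int ds/\sqrt{2V(s)}$ remains nondegenerate.
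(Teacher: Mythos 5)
Your proposal is correct, and the argument you give---reduce \eqref{sp0} via $\tro\tu\equiv u_+$ and the Bernoulli integral to the scalar ODE $\tphi''=F(\tphi)$, then apply a phase-plane/quadrature analysis with the Sagdeev potential $V(\tphi)=\int_0^{\tphi}F$---is essentially the standard proof in the cited reference \cite{M.S.1}. Note that the present paper does not prove Lemma~\ref{1.1} at all; it is imported verbatim from \cite{M.S.1}, so there is no in-paper proof to compare against beyond the citation.
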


From now on we discuss our main results.
We first show the unique existence of stationary solutions 
$(\ro^s,\bm{u}^s,\phi^s)=(\ro^s,u^s_1,u^s_2,u^s_3,\phi^s)$
over the domain $\Om$ with the curved boundary by regarding 
$(\ro^s,u^s_1,u^s_2,u^s_3,\phi^s)(x)$ 
as a perturbation of $(\tro,\tu,0,0,\tphi)(\tilde{M}(x))$, where
\begin{equation}\lb{tM1}
\tilde{M}(x):=x_1-M(x_2,x_3). 
\end{equation}
The stationary solutions satisfy the equations
\begin{subequations}\label{seq0}
   \begin{gather}
   \nabla \cdot(\ro^s \bm{u}^s)=0,
   \label{seq1}\\
    \left( \bm{u}^s \cdot \nabla \right) \bm{u}^s
    + K \nabla (\log \ro^s) = \nabla \phi^s,
   \label{seq2}\\
   \Delta \phi^s=\ro^s-e^{-\phi^s}
   \label{seq3}
    \end{gather}
and the conditions
   \begin{gather}
  \inf_{x\in \Om}\ro^s(x)>0,
  \label{spo1}
  \\
  \lim_{x_1\to\infty}(\ro^s,u^s_1,u^s_2,u^s_3,\phi)(t,x_1,x_2,x_3)=(1,u_+,0,0,0),
  \label{sbc1}
  \\
  \phi^s(t,M(x_2,x_3),x_2,x_3)=\phi_b \quad \text{for} \
  (x_2,x_3)\in \mathbb R^2.
  \label{sbc2}
 \end{gather}
 \end{subequations}
The existence result is summarized in the following theorem.
It is worth to pointing out that we do not require 
any smallness assumptions for the function $M$ 
representing the boundary of the domain $\Om$. 

\begin{thm}\lb{1.2}
Let $m \geq 3$ and $u_+$ satisfy \er{Bohm1} and \er{asp1'}.
There exist positive constants $\beta \leq \al/2$,
where $\al$ is defined in Lemma \ref{1.1},
and $\de$ such that if $|\phi_b| \leq \de$, 
then stationary problem \er{seq0} 
has a unique solution $(\rho^s,\bm{u}^s,\phi^s)$ as
\begin{gather*}
(\rho^s,u^s_1,u^s_2,u^s_3,\phi^s)
-(\tro\circ\tilde{M},\tu\circ\tilde{M},0,0,\tphi\circ\tilde{M})
\in [H^m_\beta(\Om)]^4\times H^{m+1}_\beta(\Om),
\\
\|(\rho^s-\tro\circ\tilde{M},u^s_1-\tu\circ\tilde{M},u^s_2,u^s_3)\|_{m,\beta}^2
+\|\phi^s-\tphi\circ\tilde{M}\|_{m+1,\beta}^2 \leq C |\phi_b|,
\end{gather*}
where $C$ is a positive constant independent of $\phi_b$.
\end{thm}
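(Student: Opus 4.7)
The plan is to follow the program announced in the introduction: first establish the global-in-time solvability of the evolution problem \er{eq0} and then extract the stationary solution as a time-asymptotic limit via a time-periodic argument. Throughout, write
$\bar U := (\tro\circ\tilde M,\ \tu\circ\tilde M,\ 0,\ 0,\ \tphi\circ\tilde M)$ and seek the stationary solution in the form $U^s = \bar U + W$. Substituting $\bar U$ into the three-dimensional stationary system \er{seq0} produces a residual that vanishes identically when $M \equiv 0$, and whose size in any weighted norm is controlled by $|\phi_b|$ through the pointwise decay \er{ses1}, multiplied by finitely many derivatives of $M$ that are bounded uniformly thanks to $M \in \cap_{k \geq 1} H^k(\mathbb R^2)$.

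First I would recast \er{eq0} as a perturbation system for $W(t,x) = U(t,x) - \bar U(x)$. The resulting equations form a symmetrizable hyperbolic system in $(\ro, \bm u)$ coupled with the Poisson equation for $\ph$, linearized around a state that is exponentially close to $(1, u_+, 0, 0, 0)$ as $x_1 \to \infty$ by Lemma \ref{1.1}. The supersonic outflow condition \er{super1'} together with \er{asp1'} ensures that all hyperbolic characteristics at $\D\Om$ point out of $\Om$, so no boundary condition is imposed on the $(\ro,\bm u)$-components of $W$ at $\D\Om$; the Poisson part carries the homogeneous Dirichlet data on $\D\Om$ after subtracting $\bar U$. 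With these structural features, the standard time-local theory for hyperbolic--elliptic systems provides a short-time solution in $[H^m_\be(\Om)]^4 \times H^{m+1}_\be(\Om)$ for $|\phi_b|$ small.

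Second, I would derive a uniform-in-time a priori estimate of the form
\beqx
\|W(t)\|_{m,\be}^2 + \|W_5(t)\|_{m+1,\be}^2 + \int_0^t \bigl( \|W(\tau)\|_{m,\be}^2 + \|W_5(\tau)\|_{m+1,\be}^2 \bigr)\, d\tau \leq C|\phi_b|
\eeqx
in the weighted Sobolev spaces $H^k_\be(\Om)$, by adapting the dissipative energy argument developed in \cite{NOS, M.S.1, M.S.2}. The weight $e^{\be x_1}$ with $\be \leq \al/2$ is chosen so that the residual generated by $\bar U$, which carries the factor $e^{-\al \tilde M(x)}$ by \er{ses1}, together with all relevant derivatives remains square-integrable against the weight. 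The dissipative mechanism combines the Poisson term with the transport structure of the momentum equation exactly as in the half-space case, and the exponential weight produces an extra positive lower-order term arising from the negative sign of $u_1$, which is strong enough to close the estimate against the residual. Time-local solvability and this a priori bound yield a global solution $U(t)$.

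Third, with the global bound in hand, I would construct the stationary solution by a compactness argument on time-shifts. For any $T^* > 0$, set $U^{(k)}(t,x) := U(t + kT^*, x)$ for $k \in \mathbb N$; the uniform bound yields weak-$*$ convergence along a subsequence in $[H^m_\be(\Om)]^4 \times H^{m+1}_\be(\Om)$ to a limit $U^\infty$ that satisfies \er{eq0} and is $T^*$-periodic since \er{eq0} is autonomous. As $T^* > 0$ is arbitrary, $U^\infty$ must be independent of $t$, providing the sought stationary solution $U^s$. Uniqueness follows by applying the same weighted energy estimate to the difference of two stationary solutions: the difference obeys the linearized hyperbolic--elliptic system with no source, and the dissipative inequality forces it to vanish for $|\phi_b|$ small.

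The technical heart, and the step I expect to be the main obstacle, is the weighted a priori estimate in Step 2. In contrast with the half-space analyses, here the coefficient matrices depend on all three spatial variables through $\bar U \circ \tilde M$, the tangential velocities $u_2^s, u_3^s$ are not a priori small, and the curved boundary generates nontrivial surface integrals in every integration by parts. Showing that these surface terms carry a favorable sign uniformly on $\D\Om$ relies crucially on the strict inequality in \er{asp1'}, while handling higher-order derivatives requires careful commutator estimates between $\partial^\gamma$ and the boundary-adapted operators, with the weight $e^{\be x_1}$ tuned so as neither to destroy symmetrizability nor to blow up the inhomogeneity.
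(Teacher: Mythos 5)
Your overall strategy matches the paper's: first establish time-global solvability with a dissipative weighted energy estimate, then construct the stationary solution from the time-shifted sequence $U^{(k)}(t,\cdot)=U(t+kT^*,\cdot)$, extract a $T^*$-periodic limit, and exploit arbitrariness of $T^*$. However, your third step contains a genuine gap that the paper works hard to close.

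The gap is in the passage to the limit. Extracting a weak-$*$ convergent subsequence from the uniform bound does not yield a $T^*$-periodic limit, and the phrase ``is $T^*$-periodic since \er{eq0} is autonomous'' is not a valid justification. Autonomy tells you that each shift $U^{(k)}$ is again a solution, not that a weak subsequential limit repeats with period $T^*$. What one actually needs is that the \emph{entire} sequence $\{U^{(k)}\}$ converges strongly, so that the algebraic identity $U^{(k)}(T^*,\cdot)=U^{(k+1)}(0,\cdot)$ passes to the limit and forces $U^\infty(T^*,\cdot)=U^\infty(0,\cdot)$. The paper obtains this by proving exponential contraction of shifts (its Lemma \ref{5.2}), namely $\|(\Psi-\Psi^k)(t)\|_{1,\beta}+\|(\sigma-\sigma^k)(t)\|_{2,\beta}\leq Ce^{-\gamma t}$, which makes $\{(\Psi^k,\sigma^k)\}$ a Cauchy sequence in a strong topology (with Gagliardo--Nirenberg interpolation against the uniform $H^m_\beta$ bound to upgrade the norm). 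A mere weak-$*$ subsequential limit would not control the time derivative well enough to pass to the limit in the hyperbolic equations either, which is precisely why the paper insists on the time-periodic detour rather than trying to show the time derivative vanishes directly.

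A second, smaller gap: ``as $T^*>0$ is arbitrary, $U^\infty$ must be independent of $t$'' is not immediate, because for different $T^*$ you construct a priori different periodic solutions, and nothing yet relates them. The paper fills this by a dedicated uniqueness theorem for time-periodic solutions in the class satisfying the smallness bound (its Proposition \ref{5.1}, proved by a linearized weighted energy estimate over one period), which lets it identify the solutions with periods $T^*$ and $T^*/2^l$; then density of dyadic points in $[0,T^*]$ plus continuity in $t$ yields $t$-independence. You should make this step explicit: your uniqueness argument at the end of the proposal (difference of two \emph{stationary} solutions) is the right estimate, but it needs to be stated for two \emph{time-periodic} solutions, integrated over a period using periodicity to kill the $\frac{d}{dt}$ term, and invoked \emph{before} concluding time-independence rather than only afterward for uniqueness of the stationary solution.
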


We also show the stability of stationary solutions
in both exponential and algebraic weighted Sobolev spaces.
The papers \cite{M.S.1,M.S.2} pointed out that
system \er{eq1}--\er{eq3} itself does not have
the dissipative effect in the usual function space,
however there appear those effects in the weighted space.
Therefore, we employ the weighted space.
In addition, we remark that
the smallness of $M$ is not assumed in the exponential weight case.
\begin{thm}\lb{1.3}
Let $u_+$ satisfy \er{Bohm1} and \er{asp1'}.
There exist positive constants $\beta \leq \al/2$,
where $\al$ is defined in Lemma \ref{1.1},
and $\de$ 
such that if $\|(\rho_0-\rho^s,\bm{u}_0-\bm{u}^s)\|_{3,\beta}+|\phi_b| \leq \de$, 
then initial--boundary value problem \er{eq0} has a unique time-global solution
$(\rho,\bm{u},\phi)$ with \er{po1} and \er{super1'} in the following space.
\[
(\rho-\rho^s,\bm{u}-\bm{u}^s,\phi-\phi^s) \in
\left[\bigcap_{i=0}^1 C^i([0,T];H^{3-i}_\beta(\Om))\right]^4
\times C([0,T];H^{5}_\beta(\Om)).
\]
Moreover, it holds that
\begin{equation*}
\sup_{x \in \Om}|(\rho-\rho^s,\bm{u}-\bm{u}^s,\phi-\phi^s)(t,x)| \leq Ce^{-\ga t}
\quad \text{for $t \in [0,\infty)$},
\end{equation*}
where $C$ and $\ga$ are positive constants independent of $\phi_b$ and $t$.
\end{thm}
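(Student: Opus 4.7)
The plan is to prove Theorem \ref{1.3} through the standard combination of a local existence theorem with a uniform a priori weighted energy estimate, extracting the exponential decay directly from the dissipative structure of the linearization around $(\rho^s,\bm{u}^s,\phi^s)$. Setting $(\sigma,\bm{w},\psi):=(\rho-\rho^s,\bm{u}-\bm{u}^s,\phi-\phi^s)$ and subtracting \er{seq0} from \er{eq0} yields a quasilinear symmetric hyperbolic system for $(\sigma,\bm{w})$ coupled with a semilinear Poisson equation $\Delta\psi=\sigma+e^{-\phi^s}(1-e^{-\psi})$, together with homogeneous boundary data $\psi|_{\partial\Omega}=0$ and zero limit as $x_1\to\infty$, and with no boundary condition for $(\sigma,\bm{w})$ on $\partial\Omega$.

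For local existence I would run an iteration in which $\psi^{(n+1)}$ is first obtained by solving the semilinear elliptic equation driven by $\sigma^{(n)}$, and then $(\sigma^{(n+1)},\bm{w}^{(n+1)})$ is the solution of the linear symmetric hyperbolic system with coefficients frozen at $(\sigma^{(n)},\bm{w}^{(n)})$ and source $\nabla\psi^{(n+1)}$. Under the supersonic outflow condition \er{super1'}, the coefficient matrix $\sum_i n_i A_i(\bm{u})$ on $\partial\Omega$ is sign-definite, so Friedrichs theory yields well-posedness of each linearized hyperbolic problem without prescribing boundary data on $\partial\Omega$. The equivalence \er{eqiv0} lets the exponential weight $e^{\beta x_1/2}$ be carried through the scheme, giving local existence in $\bigcap_{i=0}^{1} C^i([0,T];H^{3-i}_\beta(\Omega))$ on an interval whose length depends only on the initial weighted norm and $|\phi_b|$.

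The heart of the proof is the a priori estimate
\begin{equation*}
\frac{d}{dt} E(t) + 2\gamma E(t) \leq 0, \qquad E(t):=\|(\sigma,\bm{w})(t)\|_{3,\beta}^2+\|\psi(t)\|_{5,\beta}^2,
\end{equation*}
valid as long as $E(t)$ and $|\phi_b|$ stay sufficiently small. To derive it, I would symmetrize the hyperbolic system and multiply by $e^{\beta x_1}$ (together with analogous weights for higher-order derivatives), then integrate by parts. Two dissipative contributions appear: a \emph{bulk} dissipation from $\partial_{x_1}(e^{\beta x_1})=\beta e^{\beta x_1}$ combined with the nonzero transport produced by the background velocity $u_+<0$, yielding a coercive term of order $\beta|u_+|\|(\sigma,\bm{w})\|_{0,\beta}^2$, and a \emph{boundary} dissipation of the form $\int_{\partial\Omega}\langle \sum_i n_i A_i(\bm{u})V,V\rangle e^{\beta M} dS$ with $V=(\sigma,\bm{w})$, whose good sign is exactly the content of \er{super1'}. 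The elliptic gain $\|\psi\|_{5,\beta}\leq C\|\sigma\|_{3,\beta}$ from standard weighted $H^{k+2}$ regularity for $\Delta$ with Dirichlet data and exponential decay then closes the estimate once $\beta\leq\alpha/2$ is chosen sufficiently small. Higher-order estimates are obtained by tangential differentiation along the boundary, recovering normal derivatives algebraically from the equations, with nonlinear remainders absorbed through Moser-type inequalities in $H^3_\beta$.

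The principal obstacle is the treatment of the boundary terms on the curved surface $\partial\Omega=\{x_1=M(x_2,x_3)\}$ with $M$ not assumed small. One must verify that the quadratic form $\langle\sum_i n_i A_i(\bm{u})V,V\rangle$ is definite with the correct sign uniformly over $\partial\Omega$; after symmetrization, its eigenvalues are $\bm{u}\cdot\bm{n}$ (double, transverse) and $\bm{u}\cdot\bm{n}\pm\sqrt{K}$ (acoustic), reducing definiteness to $\bm{u}\cdot\bm{n}>\sqrt{K}$, which is exactly \er{super1'}, while \er{asp1'} guarantees that this survives under the small perturbation of $\bm{u}$ from the end state $(u_+,0,0)$. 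Tangential derivatives up to order three produce commutators involving derivatives of $M$, which remain uniformly bounded thanks to the hypothesis $M\in\bigcap_{k=1}^\infty H^k(\mathbb{R}^2)$ and Sobolev embedding, so the largeness of $M$ itself does not obstruct the estimate. Once the closed estimate is in hand, Gronwall gives $E(t)\leq E(0)e^{-2\gamma t}$, a standard continuation argument then produces the global solution, and the pointwise bound claimed in Theorem \ref{1.3} follows from Sobolev embedding $H^2(\Omega)\hookrightarrow L^\infty(\Omega)$ applied to the weighted estimate, using that $e^{\beta x_1}$ is bounded below on $\Omega$ since $M\in L^\infty(\mathbb{R}^2)$.
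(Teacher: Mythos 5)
Your proposal is mathematically viable, but it takes a genuinely different route from the paper, and it is worth being clear about what that difference is. You linearize directly around the multi-dimensional stationary solution $(\rho^s,\bm{u}^s,\phi^s)$ of Theorem~\ref{1.2} and aim for a homogeneous differential inequality $\frac{d}{dt}E+2\gamma E\leq 0$, which Gronwall then closes. The paper never writes down the equations for $\rho-\rho^s$ etc. Instead, Section~\ref{S2} reformulates everything as a perturbation of the \emph{one-dimensional} profile $(\tilde\rho,\tilde u,0,0,\tilde\phi)\circ\tilde M$, which does not solve the three-dimensional system; the cost is an inhomogeneous forcing ($\bm h$ and $g_2$) of size $O(|\phi_b|)$, and the a priori estimate of Theorem~\ref{4.1} is therefore a uniform \emph{bound}, not a decay estimate. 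The decay \eqref{decay1} is then obtained indirectly: Lemma~\ref{5.2} compares $\Psi(t)$ with the shifted solution $\Psi(t+kT^*)$ by applying the same energy machinery to their difference (which \emph{does} satisfy homogeneous equations), shows exponential contraction, and passes $k\to\infty$ through the Cauchy-sequence/time-periodic construction of Proposition~\ref{5.3}. The reason the paper proceeds this way is that the global solvability argument must be run \emph{before} $(\rho^s,\bm{u}^s,\phi^s)$ exists, since the stationary solution is itself constructed from the global evolution; once Theorem~\ref{1.2} is in hand, the machinery of Lemma~\ref{5.2} yields the decay with no extra work. Your approach, by contrast, is cleaner as a standalone proof of Theorem~\ref{1.3} but would require a fresh local-existence theorem and a fresh set of weighted energy estimates for the new reformulation around $(\rho^s,\bm{u}^s,\phi^s)$, which is essentially the same amount of analysis as Lemma~\ref{5.2} done from scratch.

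Two points in your outline deserve tightening. First, your account of the interior (``bulk'') dissipation as simply $\beta|u_+|\|(\sigma,\bm w)\|_{0,\beta}^2$ undersells where the Bohm criterion actually enters: positivity of $-A^1$ alone only requires $u_+^2>K$, and the extra $+1$ in \eqref{Bohm1} is needed because the quadratic form must also absorb the coupling $-2\sigma\eta_1$ and the auxiliary $K^{-1}\psi^2$ weight coming from the Poisson equation, exactly as in the quantity $\mathcal D$ in Lemma~\ref{4.3}; without feeding the sharp elliptic estimate \eqref{ellineq1} into this form, the argument does not close at the threshold $u_+^2>K+1$. Second, the ``tangential differentiation then recover normal derivatives'' step is unnecessary here: since the boundary is strictly supersonic outflow, there is no boundary condition for the hyperbolic part and the boundary quadratic form is nonnegative for \emph{every} spatial derivative $\partial_x^{\bm a}$, so the paper (Lemma~\ref{4.4}) simply differentiates in all directions. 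Your version would still work but is an overcomplication.
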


\begin{thm}\lb{1.4}
Let $\la \geq 2$, $\nu \in (0,\la]$, and $u_+$ satisfy \er{Bohm1} and \er{asp1'}.
There exist positive constants $\beta_0 \leq \beta$,
where $\beta$ is defined in Theorem \ref{2.1},
and $\de$ 
such that if $\|M\|_{5}+\|(\rho_0-\rho^s,\bm{u}_0-\bm{u}^s)\|_{3,\beta_0,\la}+|\phi_b| \leq \de$, 
then initial--boundary value problem \er{eq0} has a unique time-global solution
$(\rho,\bm{u},\phi)$ with \er{po1} and \er{super1'} in the following space.
\[
(\rho-\rho^s,\bm{u}-\bm{u}^s,\phi-\phi^s) \in
\left[\bigcap_{i=0}^1C^i([0,T];H^{3-i}_{\beta_0,\la}(\Om))\right]^4
\times C([0,T];H^{5-i}_{\beta_0,\la}(\Om)).
\]
Moreover, it holds that
\begin{equation*}
\sup_{x \in \Om}|(\rho-\rho^s,\bm{u}-\bm{u}^s,\phi-\phi^s)(t,x)| \leq C(1+t)^{-\la+\nu}
\quad \text{for $t \in [0,\infty)$},
\end{equation*}
where $C$ is a positive constant independent of $\phi_b$ and $t$.
\end{thm}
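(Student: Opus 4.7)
The plan is to mimic the proof of Theorem \ref{1.3}---constructing the local-in-time solution, closing a global energy estimate, and then extending to all $t\ge 0$---but now in the algebraic weighted space $[H^3_{\beta_0,\la}(\Om)]^4\times H^5_{\beta_0,\la}(\Om)$, and afterwards to recover the polynomial time-decay by trading spatial weight for time via a time-weighted energy argument. Setting $(n,\bm{v},\ps):=(\rho-\rho^s,\bm{u}-\bm{u}^s,\phi-\phi^s)$, subtracting \er{seq0} from \er{eq0} yields a quasilinear hyperbolic--elliptic system for $(n,\bm{v},\ps)$ with homogeneous boundary datum $\ps|_{\D\Om}=0$, small initial data, and coefficients perturbed from the background $(\rho^s,\bm{u}^s,\phi^s)$ provided by Theorem \ref{1.2}. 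Local well-posedness in $H^3_{\beta_0,\la}$ is inherited from the exponential-weight analysis of Theorem \ref{1.3}, since $w_{\beta_0,\la}$ is locally comparable to $e^{\beta x_1}$ and differs only in its tail behaviour.

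For the a priori estimate I would redo each weighted $H^3$ energy identity employed in the proof of Theorem \ref{1.3}, but now with $w_{\beta_0,\mu}$ in place of $e^{\beta x_1}$ for a parameter $\mu\in[0,\la]$. The crucial structural point is that $\D_{x_1}w_{\beta_0,\mu}=\mu c\, w_{\beta_0,\mu-1}$ with $c=\min\{\beta_0,(1+|M|_{L^\infty})^{-1}\}$, so that integration by parts against $\bm{u}^s\cdot\nabla$ produces a dissipation
\[
D_{\mu-1}(t)\gtrsim \|(n,\bm{v})(t)\|_{3,\beta_0,\mu-1}^2+\|\ps(t)\|_{5,\beta_0,\mu-1}^2,
\]
the latter via a weighted elliptic estimate for the Poisson equation \er{eq3} that must be established uniformly in $\mu\in[0,\la]$. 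Commutators of $w_{\beta_0,\mu}$ with the tangential derivatives $\D_{x_2},\D_{x_3}$ produce coefficients $\D_{x_j}M$ that can no longer be absorbed by the ambient exponential weight as in Theorem \ref{1.3}; this is exactly where the extra hypothesis $\|M\|_5\le\de$ enters, making the resulting quadratic error $\de\cdot D_{\mu-1}$ absorbable. Closing the estimate uniformly in $\mu$ gives
\[
\frac{d}{dt}E_\mu(t)+cD_{\mu-1}(t)\le 0,\qquad
E_\mu(t):=\|(n,\bm{v})(t)\|_{3,\beta_0,\mu}^2+\|\ps(t)\|_{5,\beta_0,\mu}^2,
\]
from which global existence in the space of the theorem follows by the standard continuation argument, with $E_\la(t)\le C\de$.

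The algebraic decay is then extracted by a time-weighted energy method. H\"older's inequality yields the weight interpolation $\|f\|_{\beta_0,\mu-1}\le C\|f\|_{\beta_0,\mu}^{(\mu-1)/\mu}\|f\|_{\beta_0,0}^{1/\mu}$ for $\mu\in(0,\la]$, and applying it at each derivative level shows that $D_{\mu-1}$ dominates a super-linear function of $E_{\mu-1}$ modulo the uniformly bounded factor $E_\la\le C\de$. Substituting into the energy identity with $\mu=\la-k+1$ and iterating in $k=1,\ldots,\la$, together with the time-weighted inequality
\[
\frac{d}{dt}\bigl[(1+t)^{2k}E_{\la-k}(t)\bigr]\le 2k(1+t)^{2k-1}E_{\la-k}(t)-c(1+t)^{2k}D_{\la-k-1}(t),
\]
produces by induction the bound $E_0(t)\le C(1+t)^{-2(\la-\nu)}$ for any $\nu\in(0,\la]$. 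The $L^\infty$ decay claimed in the theorem then follows from the Sobolev embedding $H^2(\Om)\hookrightarrow L^\infty(\Om)$ applied to $(n,\bm{v},\ps)$. The principal obstacle I foresee is the uniform-in-$\mu$ weighted elliptic estimate for $\ps$: passing the polynomial weight through the Poisson solution operator across the curved boundary requires an interior/near-boundary decomposition in which the smallness of $\|M\|_5$ is repeatedly invoked to absorb the $M$-dependent commutators, and getting constants that are independent of $\mu\in[0,\la]$ is what drives the need to treat the parametrized family of weights simultaneously rather than one weight at a time.
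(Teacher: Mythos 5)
Your proposal takes a genuinely different route from the paper's. The paper does not redo the algebraic-weight energy machinery at all: instead it sets $w=\log\rho$, $w^s=\log\rho^s$, flattens the boundary by the change of variables $y_1=x_1-M(x_2,x_3)$, $y_2=x_2$, $y_3=x_3$, and observes that the resulting system on $\mathbb R^3_+$ has exactly the same left-hand sides as equations (1.13) of \cite{NOS}, while every new term produced by the Jacobian of the flattening carries a factor $\nabla M$. Thus $\|M\|_5\le\de$ makes all curvature contributions small first-order perturbations, and the full weighted-energy/time-weighted iteration apparatus is simply imported from \cite{NOS} ``with tiny modifications.'' You instead keep the curved domain $\Om$ and propose to re-derive the entire algebraic-weight energy scheme there. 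Both are legitimate in principle, but the paper's reduction is considerably cheaper because it localizes the entire geometric difficulty into small $\nabla M$-coefficients of a known flat-boundary system.

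There is, however, a concrete error in where you locate the role of $\|M\|_5\le\de$. You assert that ``commutators of $w_{\beta_0,\mu}$ with the tangential derivatives $\D_{x_2},\D_{x_3}$ produce coefficients $\D_{x_j}M$.'' This cannot be right: $w_{\beta_0,\mu}(x_1)$ is a function of $x_1$ alone, so $[\D_{x_j},w_{\beta_0,\mu}]=0$ for $j=2,3$. In the unflattened picture the $\nabla M$-dependence arises only through boundary integrals over $\D\Om$ (where the normal has tangential components $n_j=\D_{x_j}M/\sqrt{1+|\nabla M|^2}$ and the weight is evaluated at $x_1=M$), and those boundary terms already have a favourable sign via \er{super1} without any smallness of $M$ — indeed Theorem \ref{1.3} in the exponential case uses no smallness of $M$ at all. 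The genuine place where the smallness is used in the paper is the flattened system: the transport terms such as $(\D_{y_1}\overline{\psi})\,\{\bm{u}^s\cdot\nabla M\}$ are first-order in the solution with an $O(|\nabla M|)$ coefficient multiplying the leading $\bm u^s$, and only smallness of $\nabla M$ makes them absorbable by the lower-level dissipation $D_{\mu-1}$ produced by the algebraic weight. If you do not flatten, you must identify exactly which term plays this role in $\Om$, and you have not done so. A secondary point: you perturb in $\rho-\rho^s$ rather than $\log\rho-\log\rho^s$; the logarithmic variable is what symmetrizes the hyperbolic part (as in Section \ref{S2} via $v=\sqrt K\log\rho$), and without it the energy identities you plan to ``redo'' do not take the symmetric form needed for the boundary sign argument.
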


In this paper, we focus only on the discussion on Theorems \ref{1.2} and \ref{1.3},
because Theorem \ref{1.4} can be shown by the essentially same method
as in \cite{NOS} which proved the stability of stationary solutions 
to problem \er{eq0} with $M=0$.
An outline of the proof of Theorem \ref{1.4} will be discussed in Appendix \ref{S6}.
Now we mention some remarks from a physical point of view.

\begin{rem}
Bohm originally derived criterion \er{Bohm1} 
for the formation of sheaths only in the planer wall case.
What most interests us in Theorems \ref{1.2} and \ref{1.3} is that
his criterion with the supersonic outflow condition \er{asp1'} 
also guarantees the formation of sheaths in any case 
that the shape of walls is drawn by a graph.
We emphasize again that \er{asp1'} is a necessary condition
for the well-posedness of problem \er{eq0}.
\end{rem}

This paper is organized as follows.
In Section \ref{S2}, we start from rewriting 
initial--boundary value problem \er{eq0}
by introducing a perturbation from the stationary solution
over the half space.
Section \ref{S4} is devoted to showing the time-global solvability of the rewritten problem
in the exponential weighted Sobolev space.
We construct stationary solutions in Section \ref{S5} 
by using the time-global solutions established above.
The stability of stationary solutions is also shown in the same weighted space.
Appendixes \ref{S6} and \ref{Appendix1} provide the proofs of 
the stability in the algebraic weighted Sobolev space and general inequalities, respectively.

\section{Reformulation}\lb{S2}
For mathematical convenience, we begin by reformulating
initial--boundary value problem \er{eq0}.
Let us introduce new functions 
\[
\tilde{V}(x_1)={}^{t}(\tv,\tilde{\bm{u}})(x_1)
={}^{t}(\tv,\tu_1,\tu_2,\tu_3)(x_1):={}^{t}(\sqrt{K}\log\tro,\tu,0,0)(x_1), \quad
v(t,x):=\sqrt{K}\log\rho(t,x)
\]
and perturbations
\begin{align*}
\Psi(t,x)&={}^t(\psi,\bm{\eta})(t,x)={}^t(\psi,\eta_1,\eta_2,\eta_3)(t,x)
:={}^t(v,u_1,u_2,u_3)(t,x)-\tilde{V}(\tilde{M}(x)),
\\
\sigma(t,x)&:=\phi(t,x)-\tphi(\tilde{M}(x)),
\end{align*}
where $\tilde{M}(x)$ is defined in \er{tM1}.
Then, from \er{eq0} and \er{sp0}, we have the reformulated problem for $(\Psi,\sigma)$:
\begin{subequations}\lb{re0}
\begin{gather}
\D_{t}\Psi+
\sum_{j=1}^3A^j[\tV+\Psi]\D_{x_j}\Psi
=\begin{bmatrix}
0 \\ \nabla \sigma
\end{bmatrix}
+B[\tV',\nabla M]\Psi
+\begin{bmatrix}
0 \\ \bm{h}[\tV,\tV',\nabla M]
\end{bmatrix},
\lb{re1}
\\
\Delta \sigma-\sigma=K^{-1/2}{\psi}+g_0[\psi,\tv]
+g_1[\sigma,\tphi]+g_2[\tphi',\nabla M],
\lb{re2}
\\
\lim_{|x|\to\infty}(\Psi,\sigma)(x)=0, 
\lb{re4}
\\
\sigma(M(x_2,x_3),x_2,x_3)=0,
\lb{re5}
\\
\Psi(0,x)=\Psi_0(x):=
{}^t(\sqrt{K}\log\rho_0,\bm{u}_0)(x)-\tilde{V}(\tilde{M}(x)).
\lb{re3}
\end{gather}
\end{subequations}
Here the $4 \times 4$ symmetric matrices $A^j$, $4 \times 4$ matrix $B$,
and $3 \times 1$ matrix $\bm{h}$ are defined as
\begin{align*}
A^1[\tV+\Psi]&:=
\begin{bmatrix}
(\tu_1+\eta_1) & \sqrt{K} & 0 & 0
\\
\sqrt{K} & (\tu_1+\eta_1) & 0 & 0
\\
0 & 0 & (\tu_1+\eta_1) & 0
\\
0 & 0 & 0 & (\tu_1+\eta_1)
\end{bmatrix},
\\
A^2[\tV+\Psi]&:=
\begin{bmatrix}
(\tu_2+\eta_2) & 0 & \sqrt{K} & 0
\\
0 & (\tu_2+\eta_2) & 0 & 0
\\
\sqrt{K} & 0 & (\tu_2+\eta_2) & 0
\\
0 & 0 & 0 & (\tu_2+\eta_2)
\end{bmatrix},
\\
A^3[\tV+\Psi]&:=
\begin{bmatrix}
(\tu_3+\eta_3) & 0 & 0 & \sqrt{K}
\\
0 & (\tu_3+\eta_3) & 0 & 0
\\
0 & 0 & (\tu_3+\eta_3) & 0
\\
\sqrt{K} & 0 & 0 & (\tu_3+\eta_3)
\end{bmatrix},
\\
B[\tV',\nabla M]&:=
\begin{bmatrix}
0 & -\tv' & \tv'\D_{x_2} M & \tv'\D_{x_3} M
\\
0 & -\tu' & \tu'\D_{x_2} M & \tu'\D_{x_3} M
\\
0 & 0 & 0 & 0
\\
0 & 0 & 0 & 0
\end{bmatrix},
\quad
\bm{h}[\tV,\tV',\nabla M]:=
\begin{bmatrix}
0
\\
-\tu\tu' \D_{x_2} M
\\
-\tu\tu' \D_{x_3} M
\end{bmatrix}.
\end{align*}
The scalar values $g_0$, $g_1$, and $g_2$ are defined as
\begin{align*}
g_0[\psi,\tv]
&:=K^{-1/2}{\psi}(e^{\tv/\sqrt{K}}-1)
+e^{\tv/\sqrt{K}}\left(e^{\psi/\sqrt{K}}-1-K^{-1/2}{\psi}\right),
\\
g_1[\sigma,\tphi]
&:=(e^{-\tilde{\phi}}-1)\sigma-e^{-\tilde{\phi}}(e^{-\sigma}-1+\sigma),
\\
g_2[\tphi',\nabla M]
&:=\sum_{i=2}^3 (-\tphi''( \D_{x_i}M)^2+\tphi' \D_{x_ix_i}^2M).
\end{align*}
It is straightforward to check that \er{super1'} is equivalent to 
\begin{gather}
\inf_{x \in \D\Om, \, \Phi \in \mathbb R^4, \, |\Phi|=1 } 
\left\langle \sum_{j=1}^3 n_j(x_2,x_3) A^j[\Psi(t,x)+\tV(\tilde{M}(x))]\Phi, 
\Phi \right \rangle>0.
\lb{super1}
\end{gather}

We remark that it suffices to show Theorems \ref{2.1} and \ref{2.2} below 
for the completion of the proof of Theorems \ref{1.2} and \ref{1.3}, respectively.
\begin{thm}\lb{2.1}
Let $m \geq 3$ and $u_+$ satisfy \er{Bohm1} and \er{asp1'}.
There exist positive constants $\beta \leq \al/2$,
where $\al$ is defined in Theorem \ref{1.1},
and $\de$ such that if $|\phi_b| \leq \de$, 
then the stationary problem corresponding to \er{re0} 
has a solution $(\Psi^s,\sigma^s) \in [H^m_\beta(\Om)]^4\times H^{m+1}_\beta(\Om)$ 
satisfying \er{super1} and
\begin{equation}\lb{ses2}
\|\Psi^s\|_{m,\beta}^2 +\|\sigma^s\|_{m+1,\beta}^2 \leq C |\phi_b|,
\end{equation}
where $C$ is a positive constant independent of $\phi_b$.
\end{thm}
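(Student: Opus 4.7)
The plan is to follow the roadmap sketched in the introduction: rather than attacking the stationary problem directly as a hyperbolic–elliptic boundary value problem (which is ill-posed in the usual sense because the hyperbolic part carries no initial or boundary data), I construct $(\Psi^s,\sigma^s)$ as a time-asymptotic limit of solutions of the time-dependent problem \er{re0}. The first step, to be carried out in Section \ref{S4}, is the time-global solvability of \er{re0} in $H^m_\beta(\Om)$ for initial data $\Psi_0$ of size $\lesssim\delta$, together with a uniform a priori estimate of the form
\beqx
\|\Psi(t)\|_{m,\beta}^2+\|\sigma(t)\|_{m+1,\beta}^2+\int_0^t D_m(\tau)\,d\tau
\leq C\bigl(\|\Psi_0\|_{m,\beta}^2+|\phi_b|\bigr),
\eeqx
where $D_m$ is a dissipation functional controlling $\Psi$ and $\nabla\sigma$ in $H^m_\beta$. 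The key point is that the inhomogeneous forcing $\bm{h}[\tV,\tV',\nabla M]$ and $g_2[\tphi',\nabla M]$ produced by the curved boundary and $\phi_b\neq0$ decays like $e^{-\al x_1}$ thanks to Lemma \ref{1.1}, so that in the exponential weight it is absorbed by the dissipative structure of the linearized Euler--Poisson system (this is where \er{Bohm1} and \er{asp1'} are crucial). The smallness of $M$ is not needed because the weight $e^{\be x_1}$ is global.

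Given this global existence, the construction of $(\Psi^s,\sigma^s)$ proceeds as follows. Take $\Psi_0\equiv 0$, which is admissible as long as $|\phi_b|\leq\delta$, and let $(\Psi,\sigma)$ be the resulting global solution. For any $T^*>0$ and $k\in\mathbb{N}$ define the time-shifted sequence
\beqx
(\Psi_k,\sigma_k)(t,x):=(\Psi,\sigma)(t+kT^*,x).
\eeqx
Since \er{re0} is autonomous, each $(\Psi_k,\sigma_k)$ solves \er{re0} on $[0,T^*]$ with the same bound $\|\Psi_k(t)\|_{m,\beta}^2+\|\sigma_k(t)\|_{m+1,\beta}^2\leq C|\phi_b|$, and the equations give uniform control of $\partial_t\Psi_k$ in $H^{m-1}_\beta$ and of $\sigma_k$ in $H^{m+1}_\beta$. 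A standard Aubin--Lions type compactness argument, combined with weak-$*$ compactness in the weighted Sobolev spaces, produces a subsequence converging to a limit $(\Psi^\infty,\sigma^\infty)\in [H^m_\beta]^4\times H^{m+1}_\beta$ that still solves \er{re0} on $[0,T^*]$ and inherits the uniform bound by lower semicontinuity. Because the sequence is Cauchy in $C([0,T^*];H^{m-1}_{\beta,\mathrm{loc}})$, the trajectory of the original $(\Psi,\sigma)$ on $[kT^*,(k+1)T^*]$ approaches that of $(\Psi^\infty,\sigma^\infty)$, which forces $(\Psi^\infty,\sigma^\infty)(T^*,\cdot)=(\Psi^\infty,\sigma^\infty)(0,\cdot)$, i.e.\ time-periodicity with period $T^*$.

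Now I exploit the arbitrariness of $T^*$. The same procedure with $T^*$ replaced by any other $T^{**}>0$ yields a solution $(\Psi^{\infty,**},\sigma^{\infty,**})$ of period $T^{**}$; by the uniqueness statement in the global existence theorem, starting both trajectories from a common data at $t=0$ gives equal values on a dense set of times in $[0,\infty)$ (using $T^*/T^{**}$ irrational), so that $(\Psi^\infty,\sigma^\infty)$ is independent of $t$. Setting $(\Psi^s,\sigma^s):=(\Psi^\infty,\sigma^\infty)$ produces a stationary solution in $[H^m_\beta(\Om)]^4\times H^{m+1}_\beta(\Om)$ that obeys \er{super1} (preserved under the limit since the bound is strict and initial data is supersonic) and \er{ses2}. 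Uniqueness of such a stationary solution follows from the weighted energy identity for the difference of two stationary solutions, which coincides with the linearized a priori estimate at equilibrium.

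The main obstacle will be the uniform-in-time a priori estimate in Step~1 in the presence of the curved boundary: the weighted $L^2$ estimates yield boundary integrals involving $\nabla M$ and the matrices $\sum_j n_j A^j$, and one must verify that the supersonic outflow condition \er{super1} makes these boundary contributions have a favorable sign \emph{without} any smallness assumption on $\|M\|$. A secondary difficulty is that the compactness argument only passes regularity in $H^{m-1}_{\beta,\mathrm{loc}}$, so recovering the full norm bound in $H^m_\beta\times H^{m+1}_\beta$ requires lower-semicontinuity of the weighted norms together with a careful use of the elliptic equation \er{re2} to upgrade $\sigma^s$ from $H^m$ to $H^{m+1}$.
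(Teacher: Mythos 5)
Your roadmap (global solvability plus a priori bound, time-shifted sequence, periodicity, arbitrariness of the period) is indeed the paper's strategy, and the key structural observations (the forcing $\bm{h}$, $g_2$ decays like $e^{-\al x_1}$; the weight $e^{\beta x_1}$ makes smallness of $M$ unnecessary; $(\ref{super1})$ controls boundary integrals with a good sign) match the paper. However there are two substantive gaps in the passage from the global solution to the stationary one.

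First, you assert that the time-shifted sequence $(\Psi_k,\sigma_k)$ ``is Cauchy in $C([0,T^*];H^{m-1}_{\beta,\mathrm{loc}})$,'' but the only mechanism you offer is an Aubin--Lions weak-$*$ compactness argument, which produces \emph{subsequential} limits and cannot, by itself, establish that the full sequence converges. This matters: to deduce $(\Psi^\infty,\sigma^\infty)(T^*)=(\Psi^\infty,\sigma^\infty)(0)$ you implicitly use that $(\Psi_{k},\sigma_{k})(T^*)=(\Psi_{k+1},\sigma_{k+1})(0)$ and that \emph{both} sides converge to the relevant limit values; with a subsequence $\{k_j\}$ there is no reason $k_j+1$ lies in the subsequence, so periodicity of the limit does not follow. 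The paper closes this gap quantitatively with a contraction estimate (its Lemma \ref{5.2}): $\|(\Psi-\Psi^k)(t)\|_{1,\beta}+\|(\sigma-\sigma^k)(t)\|_{2,\beta}\le Ce^{-\ga t}$, proved by running the weighted energy identity for the \emph{difference} of two solutions and again exploiting \er{Bohm1}, \er{asp1'}. This exponential decay makes the whole sequence Cauchy, which is what the construction really needs. Your sketch should derive (not assume) such an estimate.

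Second, your time-independence step conflates uniqueness for the Cauchy problem with uniqueness of time-periodic solutions. The $T^*$-periodic solution $(\Psi^\infty,\sigma^\infty)$ and the $T^{**}$-periodic solution $(\Psi^{\infty,**},\sigma^{\infty,**})$ are produced by two \emph{different} limiting procedures and there is no a priori common initial datum; Cauchy uniqueness says nothing until you know their values coincide at some time. What is actually needed is a uniqueness theorem for small time-periodic solutions of \er{re1}--\er{re5} (the paper's Proposition \ref{5.1}, proved by a periodic-in-time weighted energy estimate for the difference $\bar\Psi$). Once you have it, the cleanest route is the paper's: compare the $T^*$-periodic solution with the $T^*/2^l$-periodic one (the latter is also $T^*$-periodic, so uniqueness applies), conclude the solution is $T^*/2^l$-periodic for every $l$, and use density of the dyadic rationals plus continuity in $t$. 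Your irrational-ratio argument would work too but only after first invoking the same periodic-uniqueness lemma. Finally, note that the periodic detour is not cosmetic: to identify the limit as a solution of the stationary equation you need $\partial_t\Psi^\infty=0$, and it is not clear how to show directly that $\partial_t\Psi(t)\to 0$; the periodic-then-constant argument sidesteps this, which is precisely why the paper takes it.
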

\begin{thm}\lb{2.2}
Let $u_+$ satisfy \er{Bohm1} and \er{asp1'}.
There exist positive constants $\beta \leq \al/2$,
where $\al$ is defined in Theorem \ref{1.1},
and $\de$ 
such that if $\|\Psi_0\|_{3,\beta}+|\phi_b| \leq \de$, 
then initial--boundary value problem \er{re0} has a unique time-global solution
$(\Psi,\sigma) \in [\bigcap_{i=0}^1 C^i([0,T];H^{3-i}_\beta(\Om))]^4
\times C([0,T];H^{5}_\beta(\Om))$ 
with \er{super1}. Moreover, it holds that
\begin{equation}\lb{decay1}
\sup_{x \in \Om}|(\Psi-\Psi^s,\sigma-\sigma^s)(t,x)| \leq Ce^{-\ga t}
\quad \text{for $t \in [0,\infty)$,}
\end{equation}
where $C$ and $\ga$ are positive constants independent of $\phi_b$ and $t$.
\end{thm}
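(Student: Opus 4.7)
The plan is to combine a time-local existence result with a uniform-in-time a priori estimate in $H^3_\beta(\Om)\times H^5_\beta(\Om)$ via a standard continuity argument, and then extract the exponential decay \er{decay1} by running the same energy estimate with an additional time weight $e^{\ga t}$. Because the stationary object $(\Psi^s,\sigma^s)$ is constructed in Section \ref{S5} \emph{out of} the time-global solution of \er{re0}, the decay assertion is obtained by applying the energy analysis not to $(\Psi,\sigma)$ itself but to the difference $(\Psi-\Psi^s,\sigma-\sigma^s)$, which satisfies a system of exactly the same structure as \er{re0} with zero Dirichlet data on $\D\Om$ and zero limit as $x_1\to\infty$.

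First I would establish time-local existence of $(\Psi,\sigma)\in [\bigcap_{i=0}^{1} C^i([0,T_*];H^{3-i}_\beta(\Om))]^4\times C([0,T_*];H^5_\beta(\Om))$ by a standard iteration scheme. The hyperbolic block \er{re1} is Friedrichs symmetric because each $A^j[\tV+\Psi]$ is symmetric, and the supersonic outflow condition \er{super1} forces all normal characteristics at $\D\Om$ to point outward, so no hyperbolic boundary condition is prescribed there. In parallel, \er{re2}--\er{re5} is a semilinear Dirichlet problem whose linearization $\Delta-1$ is invertible in the weighted scale, so $\sigma$ is recovered from $\psi$ at each step with a two-derivative gain. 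The weights $e^{\beta x_1}$ are smooth and bounded below on any $\{x_1\geq M(x_2,x_3)\}$ slab, so Moser-type product and composition estimates carry over to $H^k_\beta$ without modification.

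For the central a priori estimate I would apply $\D^{\al}$ for $|\al|\leq 3$ to \er{re1}, pair with $e^{\beta x_1}\D^{\al}\Psi$, and integrate over $\Om$. Integration by parts of the transport term produces three contributions: (i) a boundary flux on $\D\Om$ equal to $\tfrac12\int_{\D\Om}\langle(\sum_j n_jA^j)\D^{\al}\Psi,\D^{\al}\Psi\rangle e^{\beta x_1}\,dS$, which is nonnegative thanks to \er{super1}; (ii) the decisive dissipation $\tfrac{\beta}{2}\int_\Om e^{\beta x_1}\langle(-A^1[\tV+\Psi])\D^{\al}\Psi,\D^{\al}\Psi\rangle\,dx$, which is coercive because the Bohm criterion \er{Bohm1} forces the eigenvalues $u_+\pm\sqrt{K}$ and $u_+$ of $A^1$ at the end-state to be strictly negative; and (iii) commutator, semilinear, and $\bm{h}$-terms which are either bounded by the exponential decay of $\tV(\tilde{M})-\tV(\infty)$ and $\tphi(\tilde M)$ supplied by Lemma \ref{1.1} (motivating $\beta\leq\al/2$), or absorbed into (ii) via Sobolev embedding once $\|\Psi_0\|_{3,\beta}+|\phi_b|$ is small. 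Coupling this with the elliptic bound
\begin{equation*}
\|\sigma\|_{5,\beta}^2 \leq C\bigl(\|\psi\|_{3,\beta}^2+|\phi_b|\bigr),
\end{equation*}
obtained from standard $H^{k+2}$-regularity for $\Delta-1$ applied to \er{re2}--\er{re5} in the weighted setting, closes the estimate and, by continuity, extends the local solution to $[0,\infty)$ while preserving \er{super1}.

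For \er{decay1}, running the same energy identity on $(\Psi-\Psi^s,\sigma-\sigma^s)$ against the multiplier $e^{\ga t}e^{\beta x_1}\D^{\al}(\Psi-\Psi^s)$ yields, for $\ga>0$ small enough, $\dt(e^{\ga t}E(t))\leq 0$, where $E(t)$ is equivalent to $\|\Psi-\Psi^s\|_{3,\beta}^2+\|\sigma-\sigma^s\|_{5,\beta}^2$; integrating in $t$ and invoking the embedding $H^3_\beta(\Om)\hookrightarrow L^\infty(\Om)$ delivers \er{decay1}. I expect the main obstacle to lie in step (iii) in the presence of the curved boundary $\D\Om$ without any smallness assumption on $M$: each tangential derivative of \er{re1} generates terms loaded with $\D_{x_2}M,\D_{x_3}M$ and their higher derivatives through $B[\tV',\nabla M]$ and $\bm{h}[\tV,\tV',\nabla M]$, and these must be matched against the dissipation. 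The balance has to rely structurally on \er{Bohm1}--\er{asp1'} alone, using that the coercivity in (ii) is in the full four-dimensional direction $\D^{\al}\Psi$ whereas the large-$M$ commutators are tangential and decorated by the decaying factors $\tV',\tphi'$, which through the choice $\beta\leq\al/2$ can be absorbed into the weighted dissipation irrespective of the size of $\|M\|_{H^\infty(\mathbb R^2)}$.
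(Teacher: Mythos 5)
Your global-existence step faithfully reproduces the paper's strategy: Friedrichs-symmetric energy estimate with the multiplier $e^{\beta x_1}\partial^\alpha\Psi$, nonnegativity of the boundary flux via \er{super1}, coercivity of $-\beta A^1$ from \er{Bohm1} and \er{asp1'}, commutators and $\bm{h}$-terms absorbed through $\beta\le\alpha/2$ and the exponential decay in \er{ses1}, coupled with the elliptic gain on $\sigma$ and a continuation argument. This matches Lemma~\ref{3.1}, Proposition~\ref{4.2}, and Theorem~\ref{4.1} up to details; the remark about the Bohm criterion being hidden in the eigenvalues of $A^1$ is slightly loose (at the $L^2$ level the $\sigma\eta_1$ coupling forces the sharp inequality $u_+^2>K+1$, not just $u_+\pm\sqrt K<0$), but the intent is right.

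The decay step, however, has a concrete gap. You propose to apply $\partial^\alpha$ for $|\alpha|\le 3$ to the system satisfied by $\bar\Psi=\Psi-\Psi^s$ and obtain a differential inequality $\frac{d}{dt}\bigl(e^{\gamma t}E(t)\bigr)\le 0$ with $E(t)\simeq\|\bar\Psi\|_{3,\beta}^2+\|\bar\sigma\|_{5,\beta}^2$. But the difference equation carries the forcing $-\sum_j\bar\eta_j\,\partial_{x_j}\Psi^s$ (coming from $A^j[\tV+\Psi]-A^j[\tV+\Psi^s]$), and applying $\partial^\alpha$ with $|\alpha|=3$ produces the term $\bar\eta_j\,\partial^\alpha\partial_{x_j}\Psi^s$, i.e.\ $\nabla^4\Psi^s$, which is not in $L^2_\beta$ when $\Psi^s\in H^3_\beta(\Om)$ as constructed in Theorem~\ref{2.1}. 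The top-order energy estimate for the difference simply does not close at $H^3$. The paper sidesteps exactly this: Lemma~\ref{5.2} compares $\Psi$ with the time-shift $\Psi^k(t)=\Psi(t+kT^*)$ and runs the energy argument only up to $H^1_\beta$ (so the worst term is $\bar\eta_j\nabla^2\Psi^k\in L^2_\beta$), passes to the limit $k\to\infty$ to get $\|(\Psi-\Psi^s,\sigma-\sigma^s)(t)\|_{0,\beta}\le Ce^{-\gamma t}$, and then obtains the $L^\infty$ decay \er{decay1} by interpolating this exponential $L^2_\beta$-decay against the uniform $H^3_\beta$-bound \er{apes1} via the Gagliardo--Nirenberg inequality (accepting a smaller but still positive decay rate). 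To repair your argument you would either have to downgrade the level of the difference estimate to at most $H^1$ (and then interpolate, as the paper does), or invoke Theorem~\ref{2.1} with a larger $m$ to secure $\Psi^s\in H^4_\beta$---a step you do not take and which must be justified since the smallness threshold $\delta$ there may depend on $m$.
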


\section{Time-global solvability}\lb{S4}
This section deals with the time-global solvability of 
initial--boundary value problem \er{re0} 
for small initial data $\Psi_0$ and boundary data $\phi_b$.
We notice that inhomogeneous terms $\bm{h}$ in \er{re1} and $g_2$ in \er{re2} vanish if $\phi_b=0$.
In this case, the essentially same proof as in \cite{NOS,M.S.1} works, 
and one can see that $(\Psi,\sigma)$ exists globally in time and decays exponentially fast 
in the exponential weighted Sobolev space as $t$ tends to infinity.
Even for the case $\phi_b\neq0$, this dissipative structure enables us to prove that 
the $H^m_\beta$-norm of solutions is bounded 
by those of initial data $\Psi_0$ and inhomogeneous terms $\bm{h}$ and $g_2$.
We often use this kind of technique in 
studying parabolic equations over bounded domains (for instance, see \cite{KS1}).
The next theorem provides the unique existence of 
time-global solutions to problem \er{re0}.

\begin{thm}\lb{4.1}
Let $m \geq 3$ and $u_+$ satisfy \er{Bohm1} and \er{asp1'}.
There exist positive constants $\beta \leq \al/2$,
where $\al$ is defined in Theorem \ref{1.1},
and $\de$ depending on $\beta$ such that if 
$\|\Psi_0\|_{m,\beta}+|\phi_b| \leq \de$, 
then initial--boundary value problem \er{re0} has a unique time-global solution
$(\Psi,\sigma) \in [\bigcap_{i=0}^1 C^i([0,\infty);H^{m-i}_\beta(\Om))]^4
\times C([0,\infty);H^{m+2}_\beta(\Om))$
with \er{super1}. Moreover, it holds that
\begin{equation}\lb{apes1}
\sup_{t\in[0,\infty)}\left(
\|\Psi(t)\|_{m,\beta}^2+\|\D_t \Psi(t)\|_{m-1,\beta}^2
+\|\sigma(t)\|_{m+2,\beta}^2\right)
\leq C(\|\Psi_0\|_{m,\beta}^2+|\phi_b|),
\end{equation}
where $C$ is a positive constant depending on $\beta$ but
independent of $\phi_b$.
\end{thm}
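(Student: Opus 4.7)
The strategy is standard: first show time-local existence by Picard iteration in the weighted space $H^m_\beta(\Omega)$, then close a uniform a priori estimate using the dissipative structure of the system, and finally extend the local solution globally by continuation. The local theory for the symmetric hyperbolic--elliptic system \er{re0} goes through without essential modification from the flat-wall case treated in \cite{NOS,M.S.1}, provided that the outflow condition \er{super1} gives a positive-definite boundary quadratic form on $\D\Om$; since no boundary datum is prescribed on $\Psi$, the hyperbolic block is well-posed, while $\sigma$ is recovered from the elliptic equation \er{re2} with Dirichlet data \er{re5}, enjoying the regularity gain $\|\sigma\|_{m+2,\beta}\le C(\|\psi\|_{m,\beta}+\|g_2\|_{m,\beta})$ plus lower-order nonlinear contributions.

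The heart of the proof is the uniform estimate. Applying $\D^\gamma$ with $|\gamma|\le m$ to \er{re1}, pairing with $e^{\beta x_1}\D^\gamma\Psi$, integrating by parts in $x_j$ and summing gives, schematically,
\begin{equation*}
\frac{1}{2}\frac{d}{dt}\|\D^\gamma\Psi\|_{0,\beta}^2
-\frac{\beta}{2}\int_\Om e^{\beta x_1}\langle A^1\D^\gamma\Psi,\D^\gamma\Psi\rangle\,dx
+\frac{1}{2}\int_{\D\Om}e^{\beta M}\Bigl\langle\sum_{j=1}^3 n_j A^j\D^\gamma\Psi,\D^\gamma\Psi\Bigr\rangle\,dS
=R_\gamma,
\end{equation*}
where $R_\gamma$ collects the contribution of $\nabla\sigma$, the linear source $B\Psi$, the inhomogeneity $\bm{h}$ and the commutators $[\D^\gamma,A^j]\D_{x_j}\Psi$. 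Under the Bohm criterion \er{Bohm1} both eigenvalues $u_+\pm\sqrt{K}$ of the leading $(1,2)$-block of $A^1$ are strictly negative, so $-A^1\ge c\,I$ in a neighborhood of $\tV$, yielding a bulk dissipation $c\beta\|\D^\gamma\Psi\|_{0,\beta}^2$; the boundary integral is nonnegative by \er{super1}. Combining with the elliptic gain for $\sigma$, controlling the semilinear errors $g_0$ and $g_1$ and the commutators by Moser-type product estimates in terms of $\|\Psi\|_{m,\beta}$, and bounding $\|\bm{h}\|_{m,\beta}+\|g_2\|_{m,\beta}\le C|\phi_b|$ by means of Lemma \ref{1.1} together with the choice $\beta\le\alpha/2$, a continuation-smallness argument absorbs the cubic term and delivers
\begin{equation*}
\frac{d}{dt}E(t)+c\,E(t)\le C|\phi_b|,\qquad E(t):=\|\Psi(t)\|_{m,\beta}^2+\|\sigma(t)\|_{m+2,\beta}^2,
\end{equation*}
whence $E(t)\le E(0)+C|\phi_b|$ uniformly in $t\ge 0$. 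The bound on $\D_t\Psi$ is read off directly from \er{re1}, and global existence then follows from the local theory by a continuation argument.

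The main obstacle will be reconciling the exponential weight with the tangential behavior of the inhomogeneities. The base profile $\tV\circ\tilde{M}$ is constant along any surface $\{x_1-M(x_2,x_3)=\mathrm{const}\}$, so $\bm{h}$ and $g_2$ inherit their $x_1$-decay only through the factor $e^{-\alpha(x_1-M)}$ supplied by Lemma \ref{1.1}; to keep $e^{\beta x_1/2}\bm{h}$ square-integrable without assuming $M$ small, one must choose $\beta\le\alpha/2$ so that the weight is dominated by the intrinsic decay, compensating for the tangential shift by $\|M\|_{L^\infty}<\infty$. A secondary difficulty is that the boundary quadratic form $\sum_j n_j A^j$ is now a full $4\times 4$ form depending on $\nabla M$ rather than a diagonal minor of $A^1$; its positivity must be extracted from \er{super1'} uniformly in $(x_2,x_3)$, and $\nabla M$ also appears in commutator terms in the higher-derivative estimates, where it must be absorbed using $M\in\cap_{k}H^k(\mathbb{R}^2)$. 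Once these geometric compatibility issues are settled, the remaining work parallels the flat-wall treatment in \cite{NOS,M.S.1}.
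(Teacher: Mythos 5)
Your outline misses the central structural device of the paper's $L^2$ estimate and, relatedly, misidentifies where the Bohm criterion actually enters. If you pair $\D^\gamma\eqref{re1}$ with $e^{\beta x_1}\D^\gamma\Psi$ and park the electrostatic forcing in $R_\gamma$, then at $\gamma=0$ the term $\int_\Om e^{\beta x_1}\nabla\sigma\cdot\bm{\eta}\,dx$ carries no $\beta$ factor and no smallness: the elliptic gain only gives $\|\nabla\sigma\|_{0,\beta}\lesssim\|\psi\|_{0,\beta}$ with an order-one constant, so this contribution cannot be absorbed by the bulk dissipation $c\beta\|\Psi\|_{0,\beta}^2$. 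The paper resolves this by \emph{coupling} three identities: the $L^2$ identity for $\Psi$ \eqref{bes1}, a second identity for the derived pair $(\nabla\cdot\bm{\eta},\nabla\psi)$ \eqref{bes3}, and the algebraic relation \eqref{bes4} obtained by multiplying the Poisson equation by $2\nabla\cdot\bm{\eta}$ and then replacing $\nabla\cdot\bm{\eta}$ via the first component of \eqref{re1}. After summing, the order-one couplings $-2\sigma(\nabla\cdot\bm{\eta})$ and $2(\Delta\sigma)(\nabla\cdot\bm{\eta})$ cancel exactly, leaving the energy density $|\Psi|^2+K^{-1}\psi^2+(\nabla\cdot\bm{\eta})^2+|\nabla\psi|^2$ and only a $\beta$-weighted bulk remnant $2\beta\int e^{\beta x_1}\sigma\eta_1\,dx$. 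Your proposal, as written, has no mechanism for this cancellation.

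This is also where the Bohm criterion is really used. Your claim that "$u_+\pm\sqrt{K}<0$, hence $-A^1\ge cI$" is the supersonic condition $u_+^2>K$, not the Bohm criterion $u_+^2>K+1$ of \eqref{Bohm1}. The strict "$+1$" is needed because the leftover term $2\beta\int e^{\beta x_1}\sigma\eta_1$ must be absorbed using the \emph{sharp} elliptic bound \eqref{ellineq1}, $\|\sigma\|_{1,\beta}^2\le(K^{-1}+D\beta^2+\text{small})\|\psi\|_{0,\beta}^2+C|\phi_b|$, and the resulting quadratic form $\mathcal{D}$ in $(\psi,\eta_1)$ (after the paper's Lemma \ref{4.3}) has positive discriminant precisely when $u_+^2>K+1$. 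Finally, the closing step is not a single differential inequality $\frac{d}{dt}E+cE\le C|\phi_b|$: the basic estimate \eqref{basic1} retains $\frac{C}{\beta}(N_{m,\beta}+|\phi_b|)\|\nabla\Psi\|_{0,\beta}^2$ on the right, while the higher-order estimate \eqref{higher1} retains $\|\Psi\|_{k-1,\beta}^2$; the a priori bound is obtained by a cross-substitution/bootstrap, not a Gronwall with a monolithic energy including $\sigma$ (which has no $\partial_t$ equation). Your identification of the weight choice $\beta\le\alpha/2$ and of the boundary positivity issue from \eqref{super1'} is correct, but those are secondary to the cancellation structure above.
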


The time-global solution $(\Phi,\sigma)$ with \er{apes1} 
can be constructed by a standard continuation argument 
using the time-local solvability in Lemma \ref{3.1} and 
the a priori estimate in Proposition \ref{4.2} below.
Here we use notation
\[
 N_{m,\al}(T):=\sup_{t\in [0,T]}\|\Psi(t)\|_{m,\al}.
\]
\begin{lem}\lb{3.1}
Suppose that $\Psi_0$ satisfies \er{super1} and 
belongs $H^m_{\al/2}(\Om)$ 
for $m \geq 3$ and $\alpha>0$ being in Theorem \ref{1.1}.
Let $\beta$ be a positive constant less than
$\al/2$ and $2 e^{m_*/2}$, where
\begin{gather*}
\lb{beta1}
\qu
m_*:=\min \left\{
\inf_{x \in \mathbb R_+} \left(-\tphi(x)\right), \
K^{-1/2}\inf_{x \in \Om}(v_0+\tv)-1
\right\}.
\end{gather*}
Then there exist positive constants $\delta$ and $T$ such that 
if $|\phi_b| < \delta$, problem \er{re0} has a unique solution 
$(\Psi,\sigma) \in [\bigcap_{i=0}^1 C^i([0,T];H^{m-i}_\beta(\Om))]^4
\times C([0,T];H^{m+2}_\beta(\Om))$ with \er{super1}.
\end{lem}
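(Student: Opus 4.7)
The plan is to construct the solution by a Picard iteration adapted to the coupled hyperbolic--elliptic structure of \er{re0}. Starting from $(\Psi^{(0)},\sigma^{(0)})\equiv(0,0)$, for each $n\geq 0$ I would first solve the linear elliptic problem for $\sigma^{(n+1)}$ obtained by freezing the arguments of $g_0,g_1,g_2$ at $(\Psi^{(n)},\sigma^{(n)})$, with Dirichlet datum $\sigma^{(n+1)}|_{\partial\Om}=0$ and decay at infinity. Using Lemma \ref{1.1} to bound $|e^{-\tphi}-1|\leq C|\phi_b|$, and the bound $\beta<2e^{m_*/2}$ together with the smallness of $|\phi_b|$ to ensure positivity of the zeroth-order coefficient, the operator $-\Delta+1$ up to a small perturbation is coercive in $H^1_\beta(\Om)$, and weighted elliptic regularity on $\Om$ yields a unique $\sigma^{(n+1)}\in H^{m+2}_\beta(\Om)$ with a bound polynomial in $\|\psi^{(n)}\|_{m,\beta}$, $\|\sigma^{(n)}\|_{m+1,\beta}$, and $|\phi_b|$. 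Next I would solve the linear symmetric hyperbolic system obtained from \er{re1} by freezing the matrix coefficients at $\tV+\Psi^{(n)}$ and the source at $\sigma^{(n+1)}$, imposing only the initial datum $\Psi^{(n+1)}(0,\cdot)=\Psi_0$ and no boundary datum on $\partial\Om$.

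The non-imposition of boundary data is justified by \er{super1} for $\Psi_0$, which by continuity persists on a short time interval and implies that $\sum_{j}n_j A^j[\tV+\Psi^{(n)}]$ is uniformly positive definite on $\partial\Om$; the boundary is then totally outgoing and the linear problem fits the Friedrichs-type framework of \cite{Fr1}. The weighted $H^m_\beta$ energy estimate follows from commuting \er{re1} with $\partial^\gamma$ for $|\gamma|\leq m$, pairing with $e^{\beta x_1}\partial^\gamma\Psi^{(n+1)}$, and integration by parts using symmetry of the $A^j$: the derivative of the weight produces $-\tfrac{\beta}{2}\int_\Om e^{\beta x_1}\langle A^1[\tV+\Psi^{(n)}]\partial^\gamma\Psi^{(n+1)},\partial^\gamma\Psi^{(n+1)}\rangle\,dx$, which is nonnegative in a neighborhood of infinity because $u_+^2>K+1$ forces $-A^1$ to be positive definite there, and the boundary contribution is nonnegative by \er{super1}. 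The inhomogeneities generated by $B$, $\bm{h}$, $g_2$, and by $\nabla\sigma^{(n+1)}$ are controlled using Lemma \ref{1.1} and the previous elliptic estimate, producing constants that are small provided $\de$ and $T$ are small.

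The main obstacle will be closing the iteration as a contraction in a suitable norm. Fixing $\beta$ first and then choosing $\de$ and $T$ small depending on $\beta$, the above estimates let me show that the map $(\Psi^{(n)},\sigma^{(n)})\mapsto(\Psi^{(n+1)},\sigma^{(n+1)})$ preserves a ball in $[\bigcap_{i=0}^1 C^i([0,T];H^{m-i}_\beta(\Om))]^4\times C([0,T];H^{m+2}_\beta(\Om))$. The delicate step is contraction at a lower regularity level, say in $[C([0,T];L^2_\beta(\Om))]^4\times C([0,T];H^2_\beta(\Om))$: because $A^j$ depends nonlinearly on $\Psi^{(n)}$, the system satisfied by the differences $\Psi^{(n+1)}-\Psi^{(n)}$ carries a source involving one derivative of $\Psi^{(n+1)}$, which must be controlled in $L^\infty$ by the uniform higher-regularity bound via the embedding $H^3_\beta(\Om)\hookrightarrow L^\infty(\Om)$; this is where the hypothesis $m\geq 3$ enters. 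Passing to the limit yields a solution in the stated space, continuity of $\partial_t\Psi$ in $H^{m-1}_\beta(\Om)$ follows directly from \er{re1}, and uniqueness is a standard $L^2_\beta$ energy estimate for the difference of two solutions under the outflow condition \er{super1}.
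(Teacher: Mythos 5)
Your outline is correct and is essentially the approach the paper relies on: the paper gives no detailed argument for Lemma~\ref{3.1} and defers entirely to Lemma~3.1 of \cite{M.S.1}, which establishes the analogous time-local solvability via exactly this kind of Picard iteration, decoupling a linear Dirichlet problem for $\sigma^{(n+1)}$ from a linear symmetric-hyperbolic problem for $\Psi^{(n+1)}$ with totally outgoing boundary (so no boundary data on $\Psi$, justified by \er{super1}), then contracting at a lower regularity level with $m\geq3$ feeding in through Sobolev embedding. One small inaccuracy: the threshold $\beta<2e^{m_*/2}$ is not really about positivity of the zeroth-order coefficient of the linearized Poisson operator (that follows already from $|\phi_b|$ small via Lemma~\ref{1.1}); $m_*$ encodes the room left by the initial density and potential so that the $\log\rho$ reformulation and the pointwise bounds of the type in Lemma~\ref{ell0} persist on $[0,T]$, and for the local estimate the $\beta$-terms produced by integrating the weight by parts need not have a sign — for finite $T$ they can simply be Gronwalled, in contrast to the global estimate in Section~\ref{S4} where the sign is essential.
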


\begin{pro}\label{4.2}
Let $m \geq 3$ and $u_+$ satisfy \er{Bohm1} and \er{asp1'}.
Suppose that $(\Psi,\sigma) \in [\bigcap_{i=0}^1 C^i([0,T]$ $;H^{m-i}_\beta(\Om))]^4
\times C([0,T];H^{m+2}_\beta(\Om))$
be a solution to problem \er{re0} 
with \er{super1}.
There exist positive constants $\beta \leq \al/2$,
where $\al$ is defined in Theorem \ref{1.1},
and $\de$ depending on $\beta$
such that if $N_{m,\beta}(T)+|\phi_b| < \delta$,
the following estimate holds.
\begin{equation}\lb{apes2}
\sup_{t\in[0,T]}\left(
\|\Psi(t)\|_{m,\beta}^2+\|\D_t \Psi(t)\|_{m-1,\beta}^2
+\|\sigma(t)\|_{m+2,\beta}^2\right)
\leq C(\|\Psi_0\|_{m,\beta}^2+|\phi_b|),
\end{equation}
where $C$ is a positive constant depending on $\beta$
but independent of $\phi_b$.
\end{pro}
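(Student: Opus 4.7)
My plan is to implement a weighted energy method adapted to problem \eqref{re0} that mimics the dissipative mechanism developed for the planar case in \cite{NOS,M.S.1}, while treating the inhomogeneous terms $\bm{h}$ and $g_2$ as $O(|\phi_b|)$ forcing through the decay estimate \eqref{ses1} in Lemma \ref{1.1}. Since $\tilde{V}'$ and the derivatives of $\tilde{\phi}$ decay like $|\phi_b|e^{-\al \tilde{M}(x)}$, a direct calculation using $M \in \cap_k H^k(\mathbb{R}^2)$ and $\beta \leq \al/2$ yields
\[
\|\bm{h}[\tilde{V},\tilde{V}',\nabla M]\|_{m-1,\beta} + \|g_2[\tilde{\phi}',\nabla M]\|_{m,\beta} \leq C|\phi_b|,
\]
which is the channel through which the boundary data $\phi_b$ enters the bound on the right-hand side of \eqref{apes2}.

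For the hyperbolic part, I would apply $\partial^\gamma$ with $|\gamma|\leq m$ to \eqref{re1}, pair with $e^{\beta x_1}\partial^\gamma\Psi$, and use symmetry of the $A^j[\tilde{V}+\Psi]$ to integrate by parts. Since $\partial_{x_j}(e^{\beta x_1}) = \beta\delta_{j,1}e^{\beta x_1}$, only the $A^1$ term picks up a bulk weight contribution, and the Bohm criterion \eqref{Bohm1} together with the smallness of $|\phi_b|+N_{m,\beta}(T)$ makes $-A^1[\tilde{V}+\Psi]$ uniformly positive definite on $\Om$, producing a dissipative term $\geq c\beta\|\partial^\gamma\Psi\|_{0,\beta}^2$. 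The boundary integrand is $\langle(\sum_j n_j A^j)\partial^\gamma\Psi,\partial^\gamma\Psi\rangle$, which is positive by the supersonic outflow condition \eqref{super1}; with its natural sign in the identity this also contributes a dissipative term on $\partial\Om$. Commutators between $\partial^\gamma$ and $A^j[\tilde{V}+\Psi]$ are cubic in $(\Psi,\nabla M,\tilde{V}')$ and absorbable for small data, while the linear perturbation $B\Psi$ is bounded by $|\phi_b|\|\Psi\|_{m,\beta}$ via the decay of $\tilde{V}'$.

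The genuinely delicate step is the coupling term $\int_\Om e^{\beta x_1}\partial^\gamma\bm{\eta}\cdot\partial^\gamma\nabla\sigma\,dx$. Following \cite{M.S.1}, I would integrate by parts to convert it into $-\int_\Om\partial^\gamma\sigma(\beta\partial^\gamma\eta_1+\nabla\cdot\partial^\gamma\bm{\eta})e^{\beta x_1}dx$ (the boundary trace vanishes by \eqref{re5}), then substitute $\nabla\cdot\bm{\eta}$ from the continuity-type component of \eqref{re1} to produce a $\partial_t\psi$ term, and finally use \eqref{re2} to replace $\psi$ by $\sqrt{K}(\Delta\sigma-\sigma)$ modulo quadratic remainders. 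This generates a hidden energy $\tfrac{d}{dt}\tfrac{1}{2}\|\partial^\gamma\sigma\|_{1,\beta}^2$ to be incorporated into the left-hand side. Together with elliptic regularity for \eqref{re2}--\eqref{re5}, giving $\|\sigma\|_{m+2,\beta}\leq C(\|\psi\|_{m,\beta}+|\phi_b|)$, and a direct bound on $\|\partial_t\Psi\|_{m-1,\beta}$ read off from \eqref{re1}, this closes the energy hierarchy.

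The principal obstacle is the interplay between the curved boundary and the fact that the weight $e^{\beta x_1}$ only extracts dissipation in the $x_1$-direction. In the planar case $M\equiv 0$ the tangential matrices $A^2,A^3$ commute with the weight and vanish at the boundary, whereas here they contribute at $\partial\Om$ through $n_2,n_3$ and in the bulk through $\nabla M$-dependent lower-order pieces of $B$, $\bm{h}$, and $g_2$. The positivity of $\sum_j n_j A^j$ supplied by \eqref{super1} (which is equivalent to \eqref{super1'}) handles the boundary, and the lower-order bulk pieces are of size $O(|\phi_b|)+O(N_{m,\beta}(T))$, so they can be absorbed into the bulk dissipation once $\beta$ and $\delta$ are chosen sufficiently small. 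A standard Gronwall argument then yields \eqref{apes2}, with the $|\phi_b|$ (rather than $|\phi_b|^2$) on the right arising after Young's inequality applied to the $O(|\phi_b|)$ forcing.
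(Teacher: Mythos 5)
Your overall strategy — weighted energy estimates with $e^{\beta x_1}$, treating $\bm{h}$, $g_2$ as $O(|\phi_b|)$ forcing, using \eqref{super1} for the boundary term and the weight derivative for bulk dissipation, then reading off $\partial_t\Psi$ from the equation — is the same as the paper's. But several details you leave implicit are precisely where the proof lives, and as stated there are gaps.

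First, the role of the Bohm criterion is not captured by "$-A^1[\tilde V + \Psi]$ is uniformly positive definite." That matrix has eigenvalues $-u_+ \pm\sqrt K,\,-u_+,\,-u_+$, so positivity only needs $u_+^2 > K$, which is strictly weaker than \eqref{Bohm1}. The full Bohm criterion $u_+^2 > K+1$ is required because the coupling argument (the one you correctly flag as the delicate step) manufactures a hidden energy $K^{-1}\psi^2$ that is added to $|\Psi|^2$ in the energy functional, and consequently an extra $K^{-1}(\eta_1+\tilde u_1)\psi^2$ enters the flux subjected to the weight derivative. The resulting quadratic form has coefficient $-(K^{-1}+1)u_+$ on $\psi^2$, cross term $\sqrt K + K^{-1/2}$, and $-u_+$ on $|\bm\eta|^2$, and positive definiteness of exactly this form is equivalent to $u_+^2 > K+1$. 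Your proposal never tracks this modified form, and the further $2\sigma\eta_1$ contribution needs the refined elliptic estimate $\|\sigma\|_{1,\beta}^2 \leq (K^{-1} + D\beta^2 + C(N_{m,\beta}(T)+|\phi_b|))\|\psi\|_{0,\beta}^2 + C|\phi_b|$ with its explicit $K^{-1}$ leading constant, not just generic elliptic regularity; without that sharp constant the Bohm criterion cannot close the estimate.

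Second, the coupling is handled differently: the paper takes the auxiliary system for $(\nabla\cdot\bm\eta,\nabla\psi)$ to generate the term $2(\Delta\sigma)(\nabla\cdot\bm\eta)$, multiplies the Poisson equation by $2\nabla\cdot\bm\eta$, and substitutes $\nabla\cdot\bm\eta$ from the continuity component of \eqref{re1}; the $\sigma(\nabla\cdot\bm\eta)$ terms then cancel \emph{exactly} and the hidden energy is $K^{-1}\psi^2$. Your proposed further substitution $\psi = \sqrt K(\Delta\sigma - \sigma) + \text{remainders}$ to produce a hidden energy $\|\sigma\|_{1,\beta}^2$ adds an unnecessary detour; you would still need the refined elliptic estimate to relate it back to $\|\psi\|_{0,\beta}^2$ with the sharp constant, and you do not verify the sign of the resulting $\tfrac{d}{dt}$-term after the weighted integrations by parts. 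Finally, the paper deliberately applies the delicate cancellation \emph{only} at the $L^2$ level (using the energy $|\Psi|^2+K^{-1}\psi^2+(\nabla\cdot\bm\eta)^2+|\nabla\psi|^2$), whereas at order $k\geq 1$ the coupling $\partial^{\bm a}\nabla\sigma$ is of strictly lower order and absorbed via $\|\sigma\|_{k+1,\beta}\leq C(\|\psi\|_{k-1,\beta}+|\phi_b|)$; your uniform treatment over all $|\gamma|\leq m$ obscures this dichotomy and would require re-proving the cancellation at every order, which is not needed and where the algebra with commutators and $\nabla M$ becomes significantly messier.
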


Since Lemma \ref{3.1} can be proved in much the same way as Lemma 3.1 in \cite{M.S.1},
we prove only Proposition \ref{4.2} in the remainder of this section.
In subsection \ref{S4.1}, we derive estimates of $\sigma$ 
solving elliptic equation \er{re2}.
Subsections \ref{S4.1} and \ref{S4.2} deal with 
basic and higher-order estimates of $\Psi$
solving hyperbolic equations \er{re1}, respectively.
The a priori estimate is completed in subsection \ref{S4.3}.

\subsection{Elliptic estimates}\lb{S4.1}
This subsection provides
\footnote{We remark that all constants $C$ in subsection \ref{S4.1} are independent of $\beta$.}{estimates}
of $\sigma$ solving elliptic equation \er{re2}. 
Let us first show the lower and upper bounds.
\begin{lem}\lb{ell0}
Under the same assumption as in Proposition \ref{4.2}, it holds that
\begin{gather}
\sup_{x\in\Om}(\sigma+\tphi)(t,x) \leq M_1,
\quad M_1:=\max\left\{\sup_{x\in\Om}|\tphi(\tilde{M}(x))|, \
-\inf_{x\in\Omega}\frac{\tv}{\sqrt{K}}+1\right\},
\lb{bounds1}\\
\inf_{x\in\Om}(\sigma+\tphi)(t,x) \geq -M_2,
\quad M_2:=\max\left\{\sup_{x\in\Om}|\tphi(\tilde{M}(x))|, \
\sup_{x\in\Om}\frac{\tv}{\sqrt{K}}+1\right\},
\lb{bounds2}\\
\sup_{x\in\Om}|\sigma(t,x)| \leq C (N_{m,\beta}(T)+|\phi_b|),
\lb{ellineq0}
\end{gather}
where $C$ is a positive constant independent of $\beta$, $\phi_b$, and $t$.
\end{lem}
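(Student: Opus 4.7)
The plan splits into two independent arguments: a maximum-principle argument for the pointwise bounds \eqref{bounds1}--\eqref{bounds2}, and a standard elliptic estimate for \eqref{ellineq0}.

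For the first two bounds, I would work not with the reformulated equation \eqref{re2} but with the original Poisson equation \eqref{eq3}, noting that $w := \sigma + \tilde\phi \circ \tilde M = \phi$ satisfies
\[
\Delta w = e^{v/\sqrt{K}} - e^{-w} \quad \text{in } \Omega,
\]
with boundary data $w|_{\partial\Omega} = \phi_b$ (since $\tilde M \equiv 0$ on $\partial\Omega$ and $\tilde\phi(0)=\phi_b$) and with $w \to 0$ at infinity by the decay built into the spaces. Suppose for contradiction that $\sup_\Omega w > M_1$. Because $|\phi_b| = |\tilde\phi(0)| \leq \sup_{x\in\Omega}|\tilde\phi\circ\tilde M| \leq M_1$ and $w$ decays at infinity, the supremum must be attained at an interior point $x_0 \in \Omega$. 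At $x_0$, $\Delta w(x_0) \leq 0$, hence $e^{v(x_0)/\sqrt{K}} \leq e^{-w(x_0)}$, giving
\[
w(x_0) \leq -\tfrac{v(x_0)}{\sqrt{K}} = -\tfrac{\tilde v(\tilde M(x_0))}{\sqrt{K}} - \tfrac{\psi(x_0)}{\sqrt{K}}.
\]
By the Sobolev embedding $H^m_\beta(\Omega) \hookrightarrow L^\infty(\Omega)$ for $m \geq 3$ together with \eqref{eqiv0}, $|\psi(x_0)|/\sqrt{K} \leq C N_{m,\beta}(T)/\sqrt{K} < 1$ for $\delta$ chosen small enough. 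This yields $w(x_0) \leq -\inf_\Omega \tilde v/\sqrt{K} + 1 \leq M_1$, contradicting the assumption. The lower bound \eqref{bounds2} is obtained in the same way, replacing $\sup$ by $\inf$ and using the opposite sign in the pointwise comparison.

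For \eqref{ellineq0}, I would treat \eqref{re2} as a coercive linear elliptic equation for $\sigma$ with Dirichlet data $\sigma|_{\partial\Omega}=0$. Absorbing the $O(|\phi_b|)$ linear-in-$\sigma$ piece of $g_1$, namely $(e^{-\tilde\phi}-1)\sigma$, into the left-hand side keeps uniform ellipticity for $|\phi_b|$ small. The remaining inhomogeneity consists of $K^{-1/2}\psi$ (linear in $\psi$, of size $N_{m,\beta}(T)$), the quadratic term $g_0 = O(|\psi|^2)$, the quadratic-in-$\sigma$ part of $g_1$, and $g_2 = O(|\phi_b|\,\|M\|)$ arising from the curvature of $\partial\Omega$ (using \eqref{ses1} to bound $\tilde\phi', \tilde\phi''$ by $C|\phi_b|$). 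Elliptic regularity and the embedding $H^{m+2}(\Omega) \hookrightarrow L^\infty(\Omega)$ give $\|\sigma\|_{L^\infty} \lesssim N_{m,\beta}(T) + |\phi_b| + \|\sigma\|_{L^\infty}^2$, and the quadratic term is absorbed by taking $N_{m,\beta}(T)+|\phi_b|$ small, producing the required linear bound.

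The main obstacle is making the interior-maximum step airtight in an unbounded domain with a non-trivial curved boundary: one must verify that $w$ genuinely attains its supremum in $\bar\Omega$, so the decay at infinity (including transverse directions where $\tilde M(x) = x_1 - M(x_2,x_3)$ may only grow slowly) has to be extracted from the weighted-space regularity and Sobolev embedding, and the $L^\infty$ smallness of $\psi$ used to close the $+1$ margin must be controlled by $N_{m,\beta}(T)$ uniformly in $\beta$, as demanded by the footnote.
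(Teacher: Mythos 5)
Your overall plan is right in spirit---both pointwise bounds follow from a maximum-principle reasoning built on $\Delta\phi = e^{v/\sqrt K} - e^{-\phi}$, and the $L^\infty$ bound on $\sigma$ follows from elliptic theory for \eqref{re2}---but the specific route you take differs from the paper's in a way that leaves two genuine gaps.

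For \eqref{bounds1}--\eqref{bounds2} the paper does \emph{not} use a pointwise interior maximum argument. It sets $\Phi := (\sigma+\tphi)-M_1$, multiplies the equation $\Delta\Phi = -e^{-\Phi-M_1}+e^{(\psi+\tv)/\sqrt K}$ by the truncation $\Phi^+:=\max\{\Phi,0\}$, and integrates by parts. Using $\Phi^+\geq0$, $e^{-\Phi^+-M_1}\leq e^{-M_1}\leq e^{\inf(\tv/\sqrt K)-1}$, and $\|\psi\|_{L^\infty}$ small one gets $\int_\Omega|\nabla\Phi^+|^2\leq0$, hence $\Phi^+\equiv0$. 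This Stampacchia-type weak maximum principle never needs the supremum to be \emph{attained}, which is exactly the obstruction you flag at the end: in $\Omega$ the background $\tphi\circ\tilde M$ does not decay in the transverse $(x_2,x_3)$ directions (only $\sigma$ does), so ``$w\to0$ at infinity'' is not literally true and the compactness step in your contradiction argument is not free. One can repair it---$|\tphi\circ\tilde M|\leq M_1$ everywhere and $\sigma\to0$ uniformly by $H^{m+2}_\beta$ regularity, so $w< M_1+\varepsilon$ outside a compact set, which forces an interior maximizer if $\sup w>M_1$---but this is exactly the kind of delicate decomposition the integral method makes unnecessary. You should either supply that repair explicitly or switch to the truncation argument.

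For \eqref{ellineq0} your closing step is circular as written. You derive $\|\sigma\|_{L^\infty}\lesssim N_{m,\beta}(T)+|\phi_b|+\|\sigma\|_{L^\infty}^2$ and propose to absorb the quadratic term ``for $N_{m,\beta}(T)+|\phi_b|$ small.'' But absorbing $\|\sigma\|_{L^\infty}^2$ requires $\|\sigma\|_{L^\infty}$ itself to already be small, and \eqref{bounds1}--\eqref{bounds2} give only an $O(1)$ bound ($M_1,M_2\geq1$ even as $\phi_b\to0$); there is no a priori smallness to begin the bootstrap. The paper avoids this: it multiplies \eqref{re2} by $\sigma$, integrates by parts, and keeps the full nonlinear term $e^{-\tphi}(1-e^{-\sigma})\sigma$ on the left, then uses the mean value theorem together with the \emph{boundedness} of $\sigma+\tphi$ from \eqref{bounds1}--\eqref{bounds2} to conclude $e^{-\tphi}(1-e^{-\sigma})\sigma\geq c\sigma^2$ with a strictly positive $c$ depending only on the fixed range $[-M_2,M_1]$. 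This yields $\|\sigma\|_1\lesssim N_{m,\beta}(T)+|\phi_b|$ with no absorption, after which Lemma~\ref{Gell1} and Sobolev embedding give \eqref{ellineq0}. If you want to keep your coercivity-by-absorption framing, you must replace the quadratic absorption by this mean-value-theorem coercivity, which needs boundedness rather than smallness of $\sigma$.
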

\begin{proof}
For the proof of \er{bounds1}, 
let us set $\Phi:= (\tphi+\sigma) - M_1$.
It is straightforward to check from \er{sp3} and \er{re2} that
\begin{equation*}
\Delta \Phi =-e^{-\Phi-M_1}+ e^{(\psi+\tv)/\sqrt{K}}.
\end{equation*}
Multiply this by $\Phi^+:= \max\{\Phi, 0\}$, integrate it over $\Om$,
and use $\Phi^+(t,M(x_2,x_3),x_2,x_3)=0$ and $\lim_{|x|\to\infty}\Phi^+(t,x)=0$.
Then using $\Phi^+ \geq 0$ and letting $N_{m,\beta}(T)$ be small enough,  we have
\begin{align*}
\int_{\Om} |\nabla\Phi^+|^2 dx
=\int_{\Om} \left\{e^{-\Phi^+-M_1}-e^{(\psi+\tv)/\sqrt{K}} \right\}\Phi^+ dx
\leq \int_{\Om}\left\{e^{-M_1} -e^{\inf(\tv/\sqrt{K})-1}\right\}\Phi^+ dx
\leq 0,
\end{align*}
which gives \er{bounds1}. Similarly, one can have the bound \er{bounds2}.

Next we show \er{ellineq0}. It is first seen from \er{ses1}, \er{bounds1}, and \er{bounds2} that
\begin{gather}
 |g_0| \leq C(|\psi|+|\phi_b|)|\psi|,
 \lb{esg0}\\
 |g_1| \leq C(|\sigma|+|\phi_b|)|\sigma|,
 \lb{esg1}\\
 |g_2| \leq C|\phi_b| e^{-\al( x_1-M(x_2,x_3))} |\nabla M|.
 \lb{esg2}
\end{gather}
Multiply \er{re2} by $\sigma$, integrate it by parts over $\Om$,
and use \er{re5} to get
\begin{align*}
\int_\Om |\nabla \sigma|^2 dx
+\int_\Om \left(e^{-\sigma-\tphi}-e^{-\tphi}\right)\sigma dx
&=\int_\Om \left(K^{-1/2}{\psi}+g_0+g_2\right)\sigma dx
\\
&\leq \mu \|\sigma\|^2
+C[\mu](N_{m,\beta}^2(T)+|\phi_b|^2),
\end{align*}
where $\mu$ is a positive constant to be determined later and 
we have used \er{esg0}--\er{esg2}, Schwarz's inequality, and $M \in H^\infty(\Om)$
in deriving the last inequality.
On the other hand, by \er{ses1}, \er{bounds1}, and the mean value theorem,
the second term on the left hand side is estimated from below as
\[
\int_\Om\left(e^{-\sigma-\tphi}-e^{-\tphi}\right)\sigma dx 
\geq c \|\sigma\|^2.
\]
These two inequalities with sufficiently small $\mu>0$ lead to
\begin{equation}\lb{bounds3}
\|\sigma\|_1^2 \leq C(N_{m,\beta}^2(T)+|\phi_b|^2).
\end{equation}
Then applying Lemma \ref{Gell1} in Appendix \ref{Appendix1} to \er{re2}
and using \er{bounds1}, \er{bounds2}, \er{esg0}--\er{bounds3}, 
and $M \in H^\infty(\Om)$, we have
\begin{align*}
\|\sigma\|_2 \leq C(\|\psi\|+\|g_0\|+\|g_1\|+\|g_2\|)
\leq C(N_{m,\beta}(T)+|\phi_b|),
\end{align*}
which together with Sobolev's inequality yields \er{ellineq0}.
\end{proof}

From now on we estimate the $H^k_\beta$-norm of $\sigma$
by that of $\psi$ and the boundary data $\phi_b$.
\begin{lem}\lb{ell1}
Under the same assumption as in Proposition \ref{4.2}, it holds that
\begin{gather}
\|\sigma(t)\|_{1,\beta}^2 
\leq \{K^{-1}+D\beta^2+C(N_{m,\beta}(T)+|\phi_b|)\} \|\psi(t)\|_{0,\beta}^2+C|\phi_b|,
\lb{ellineq1}\\
\|\sigma(t)\|_{l+2,\beta}^2 
\leq C(\|\psi(t)\|_{l,\beta}^2+|\phi_b|^2) \quad \text{for $l=0,1,\ldots,m$,}
\lb{ellineq4}
\end{gather}
where $C$ and $D$ are positive constants independent of $\beta$, $\phi_b$, and $t$.
\end{lem}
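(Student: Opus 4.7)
The plan is to derive \er{ellineq1} from a weighted $H^1$ energy identity for \er{re2}, and then obtain \er{ellineq4} by iterating weighted elliptic regularity. First, I would multiply \er{re2} by $e^{\beta x_1}\sigma$ and integrate over $\Om$. The Dirichlet condition \er{re5} kills the surface integral coming from Green's identity, while the decay of $\sigma$ at infinity kills the tail at $x_1=\infty$. After an additional integration by parts in $x_1$ converting $\beta\int e^{\beta x_1}\sigma\D_{x_1}\sigma\,dx$ into $-(\beta^2/2)\|\sigma\|_{0,\beta}^2$, the identity
\[
\|\nabla\sigma\|_{0,\beta}^2 + \Bigl(1-\tfrac{\beta^2}{2}\Bigr)\|\sigma\|_{0,\beta}^2
= -\int_\Om e^{\beta x_1}\sigma\bigl(K^{-1/2}\psi+g_0+g_1+g_2\bigr)\,dx
\]
emerges. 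Since the left-hand side dominates $(1-\beta^2/2)\|\sigma\|_{1,\beta}^2$ for $\beta^2<2$, a single Cauchy--Schwarz on the principal coupling yields $(1-\beta^2/2)\|\sigma\|_{1,\beta}^2\le K^{-1/2}\|\sigma\|_{1,\beta}\|\psi\|_{0,\beta}+Y$, where $Y$ collects the remaining integrals. Applying Young's inequality with parameter $\mu=(1-\beta^2/2)/2$ absorbs one half of $\|\sigma\|_{1,\beta}^2$ into the left, leaving $\|\sigma\|_{1,\beta}^2\le K^{-1}(1-\beta^2/2)^{-2}\|\psi\|_{0,\beta}^2+CY'$. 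The Taylor expansion $(1-\beta^2/2)^{-2}=1+\beta^2+O(\beta^4)$ for small $\beta$ then produces the announced leading constant $K^{-1}+D\beta^2$ with $D=K^{-1}$.

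The remainder $Y$ is controlled using the pointwise bounds \er{esg0}--\er{esg2} together with the sup-norm estimate \er{ellineq0}. For $g_1=(e^{-\tphi}-1)\sigma-e^{-\tphi}(e^{-\sigma}-1+\sigma)$, I would separate the linear part, whose $\sigma^2$-contribution is $O(|\phi_b|)\|\sigma\|_{0,\beta}^2$ by the exponential decay of $\tphi$ from Lemma \ref{1.1}, from the cubic remainder, bounded by $\|\sigma\|_{L^\infty}\|\sigma\|_{0,\beta}^2\le C(N_{m,\beta}(T)+|\phi_b|)\|\sigma\|_{0,\beta}^2$. The term $g_0$ is handled by Cauchy--Schwarz together with the Sobolev embedding $\|\psi\|_{L^\infty}\le C\|\psi\|_3\le CN_{m,\beta}(T)$. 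For $g_2$, the pointwise bound $|g_2|\le C|\phi_b|e^{-\alpha(x_1-M)}|\nabla M|$, combined with $\beta\le\alpha/2$, the boundedness of $M$, and $\nabla M\in L^2(\mathbb R^2)$, yields $\|g_2\|_{0,\beta}^2\le C|\phi_b|^2$. These contributions either absorb into $\|\sigma\|_{1,\beta}^2$ under the smallness of $N_{m,\beta}(T)+|\phi_b|$, or feed the coefficient $C(N_{m,\beta}(T)+|\phi_b|)\|\psi\|_{0,\beta}^2$ and the additive $C|\phi_b|^2\le C|\phi_b|$ in \er{ellineq1}.

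For \er{ellineq4}, I would rewrite \er{re2} as $\Delta\sigma=\sigma+K^{-1/2}\psi+g_0+g_1+g_2$ with homogeneous Dirichlet data on $\D\Om$ and invoke the weighted elliptic regularity of Lemma \ref{Gell1} to obtain
\[
\|\sigma\|_{l+2,\beta}\le C\bigl(\|\sigma\|_{l,\beta}+\|\psi\|_{l,\beta}+\|g_0\|_{l,\beta}+\|g_1\|_{l,\beta}+\|g_2\|_{l,\beta}\bigr).
\]
The nonlinear terms are bounded by tame/Moser-type product inequalities in weighted Sobolev spaces, giving factors $C(N_{m,\beta}(T)+|\phi_b|)(\|\psi\|_{l,\beta}+\|\sigma\|_{l,\beta})$, while $\|g_2\|_{l,\beta}\le C|\phi_b|$ follows from the same exponential-decay analysis using $M\in H^\infty(\mathbb R^2)$. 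Induction on $l$ starting from the case $l=0$ (which combines \er{ellineq1} with the bound above) closes \er{ellineq4}. The main technical point to watch is the sharpness of the leading coefficient $K^{-1}$: a pointwise Young split on $K^{-1/2}\sigma\psi$ only yields a constant strictly larger than $K^{-1}$, so one must apply Cauchy--Schwarz before Young, to the product $\|\sigma\|_{1,\beta}\|\psi\|_{0,\beta}$, in order to produce the clean ratio $K^{-1}(1-\beta^2/2)^{-2}$ whose Taylor expansion is precisely $K^{-1}+D\beta^2+O(\beta^4)$.
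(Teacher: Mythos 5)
Your proposal is correct in substance, but for \er{ellineq1} you follow a genuinely different route from the paper. The paper runs \emph{two} passes through the weighted energy identity: a first pass with crude Young splittings produces the intermediate bound \er{ellineq3}, $\|\sigma\|_{1,\beta}^2\le C(\|\psi\|_{0,\beta}^2+|\phi_b|^2)$, and a second pass then uses \er{ellineq3} to convert the $\tfrac{\beta^2}{2}\|\sigma\|_{0,\beta}^2$ term and all $g_i$-remainders into $\psi$-terms \emph{before} performing the balanced Young split $K^{-1/2}|\psi\sigma|\le\tfrac12\sigma^2+\tfrac{1}{2K}\psi^2$, which lands cleanly on $K^{-1}+D\beta^2$. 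You instead do a single pass, keep the $\beta^2$ correction inside the factor $(1-\beta^2/2)$, apply Cauchy--Schwarz then Young to the product $\|\sigma\|_{1,\beta}\|\psi\|_{0,\beta}$, and absorb the $\sigma$-dependent remainders at the end; this yields $K^{-1}(1-\beta^2/2)^{-2}$, which indeed expands to $K^{-1}+D\beta^2+O(\beta^4)$ and produces the required form for $\beta\le 1/2$. Both routes work; the paper's version avoids the final reabsorption and gives the leading constant without a Taylor expansion step. One small inaccuracy in your commentary: a pointwise Young split with the optimal parameter $a=1-\beta^2/2$ also yields $K^{-1}(1-\beta^2/2)^{-2}$, so ``Cauchy--Schwarz before Young'' is not actually required for sharpness; what buys the paper the literal $K^{-1}$ (before the separate $D\beta^2$) is the prior conversion of the $\beta^2\|\sigma\|_{0,\beta}^2$ term via \er{ellineq3}. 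For \er{ellineq4} your plan matches the paper's, but note that Lemma \ref{Gell1} is \emph{unweighted}; the paper obtains the weighted estimate by conjugating, applying \ref{Gell1} to $e^{\beta x_1/2}\sigma$, which introduces a first-order term $\beta\D_{x_1}(e^{\beta x_1/2}\sigma)$ on the right and gives the recursion $\|\sigma\|_{l+2,\beta}\le C(\|\sigma\|_{l+1,\beta}+\|\psi\|_{l,\beta}+|\phi_b|)$ rather than the $\|\sigma\|_{l,\beta}$ you wrote; the induction closes in either formulation once \er{ellineq3} seeds the base case.
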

\begin{proof}
Let us first show \er{ellineq1}. We see from \er{ses1}, \er{ellineq0}--\er{esg2},
$\beta \leq \al/2$, Sobolev's inequality, and $M \in H^\infty$ that
\begin{gather}
\|g_0(t)\|_{0,\beta} \leq C (N_{m,\beta}(T)+|\phi_b|)\|\psi(t)\|_{0,\beta}, 
\lb{g0}\\
\|g_1(t)\|_{0,\beta} \leq C (N_{m,\beta}(T)+|\phi_b|)\|\sigma(t)\|_{0,\beta}, 
\lb{g1}\\
\sup_{t\in [0,T]} \|g_2(t)\|_{m,\beta} \leq C |\phi_b|.
\lb{g2}
\end{gather}
Multiply \er{re2} by $e^{\beta x_1} \sigma$, integrate it by parts over $\Om$, 
and use \er{g0}--\er{g2} and Schwarz's inequality to get
\begin{align*}
{}&
\int_\Om e^{\beta x_1} |\nabla \sigma|^2
+\left(1-\frac{\beta^2}{2}\right)e^{\beta x_1}|\sigma|^2 dx
\\
&=\int_\Om e^{\beta x_1} \left(K^{-1/2}{\psi}+g_0+g_1+g_2\right)\sigma dx
\\
&\leq \frac{1}{4} \|\sigma\|_{0,\beta}^2
+C\{\|\psi\|_{0,\beta}^2+(N_{m,\beta}(T)+|\phi_b|)^2\|\sigma\|_{0,\beta}^2+|\phi_b|^2\}.
\end{align*}
Owing to $\beta \leq \al/2 \leq 1/2$, 
letting $N_{m,\beta}(T)+|\phi_b|$ be sufficiently small,
we have 
\begin{equation}\lb{ellineq3}
\|\sigma(t)\|_{1,\beta}^2 \leq C(\|\psi(t)\|_{0,\beta}^2+|\phi_b|^2).
\end{equation}

Once again, multiply \er{re2} by $e^{\beta x_1} \sigma$,
integrate it by parts over $\Om$, 
and estimate the result in a different way as above by using \er{ellineq3}.
\begin{align*}
{}&
\int_\Om e^{\beta x_1} |\nabla \sigma|^2+e^{\beta x_1}|\sigma|^2 dx
\\
&=
\int_\Om e^{\beta x_1}K^{-1/2}\psi\sigma dx
+\int_\Om \frac{\beta^2}{2}e^{\beta x_1}|\sigma|^2+e^{\beta x_1} (g_0+g_1+g_2)\sigma dx
\\
&\leq \frac{1}{2} \|\sigma\|_{0,\beta}^2+\frac{1}{2K} \|\psi\|_{0,\beta}^2
+C(\beta^2+N_{m,\beta}(T)+|\phi_b|)\|\psi\|_{0,\beta}^2+C|\phi_b|.
\end{align*}
This immediately gives \er{ellineq1}.

We treat \er{ellineq4} for the case $l=0$. 
Multiplying \er{re2} by $e^{\beta x_1/2}$ yields
\begin{equation}\lb{elleq1}
\Delta (e^{\beta x_1/2} \sigma)-e^{\beta x_1/2} \sigma
=\beta\D_{x_1}(e^{\beta x_1/2} \sigma)
-\frac{\beta^2}{4}e^{\beta x_1/2} \sigma
+e^{\beta x_1/2} \left(K^{-1/2}{\psi}+g_0+g_1+g_2\right). 
\end{equation}
Applying Lemma \ref{Gell1} in Appendix \ref{Appendix1} to \er{elleq1} gives 
\begin{align}
\|e^{\beta x_1/2} \sigma\|_{2} 
&\leq C(\|e^{\beta x_1/2}\sigma\|_{1}
+\|e^{\beta x_1/2} \psi\|_{0}
+\|g_0\|_{0,\beta}
+\|g_1\|_{0,\beta}
+\|g_2\|_{0,\beta})
\notag\\
&\leq C\|\psi\|_{0,\beta}+C|\phi_b|,
\lb{elleq4}
\end{align}
where we have also used \er{eqiv0} and \er{g0}--\er{ellineq3}
in deriving the last inequality.
Hence, \er{ellineq4} holds for $l=0$.

Next let us treat the case $l\geq 1$ by induction on $l$. 
By assuming \er{ellineq4} holds for $l=i$, we show \er{ellineq4} with $l=i+1$.
It is straightforward to see from \er{A1} and \er{A3} in Appendix \ref{Appendix1} that 
\begin{gather}
\|e^{\beta x_1/2}g_0\|_{i}\leq C\|\psi\|_{i,\beta},
\quad 
\|e^{\beta x_1/2}g_1\|_{i}\leq C\|\sigma\|_{i,\beta}. 
\lb{elleq3}
\end{gather}
Applying Lemma \ref{Gell1} to \er{elleq1} again 
and using the induction hypothesis, we have
\begin{align*}
\|e^{\beta x_1/2} \sigma\|_{i+2} 
\leq C\|e^{\beta x_1/2}\sigma\|_{i+1}
+C\|e^{\beta x_1/2} \psi\|_{i}
+C\|e^{\beta x_1/2}g_2\|_{i}
\leq C\|\psi\|_{i,\beta}+C|\phi_b|.
\end{align*}
This together with \er{eqiv0} leads to \er{ellineq4} with $l=i+1$.
Hence, we deduce \er{ellineq4} for all $l=0,1,2,\ldots,m$.
\end{proof}

\subsection{Basic estimate}\lb{S4.2}
This subsection is devoted to deriving an estimate of $L^2$-norm
of $\Psi$ solving hyperbolic equations \er{re1}.
Only in this subsection, we must to be careful to check the dependence of $\beta$
in order to take it suitably small.

For the derivation, we begin by introducing several equalities.
Taking the inner product of \er{re1} with the vector $2\Psi$, we have
\begin{gather}
\D_t\left(|\Psi|^2\right)
+\sum_{j=1}^3\D_{x_j}\left(\langle A^j[\tV+\Psi]\Psi,\Psi \rangle
-2\sigma\eta_{j}\right)
=-2\sigma(\nabla\cdot\bm{\eta})+\mathcal{R}_1,
\lb{bes1}\\
\mathcal{R}_1:=
\sum_{j=1}^3 \langle \{\D_{x_j}(A^j[\tV+\Psi])\}\Psi,\Psi \rangle
+2\langle B\Psi,\Psi\rangle+2\bm{h}\cdot\bm{\eta}.
\notag
\end{gather}
Furthermore, one can check from \er{re1} that 
${}^t(\nabla \cdot \hm{\eta},\nabla\psi)$ satisfies a system of equations:
\begin{align*}
{}&
\D_{t}
\begin{bmatrix}
\nabla \cdot \hm{\eta} \\ \nabla\psi
\end{bmatrix}
+\sum_{j=1}^3A^j[\tV+\Psi]\D_{x_j}
\begin{bmatrix}
\nabla \cdot \hm{\eta} \\ \nabla\psi
\end{bmatrix}
\notag\\
&=\begin{bmatrix}
\Delta \sigma \\ \bm{0}
\end{bmatrix}
+\begin{bmatrix}
\D_{x_1}(B\Psi)_{2}
+\nabla\cdot\bm{h}
-\sum_{i,j=1}^3 \{\D_{x_i}(\tu_j+\eta_j)\} \D_{x_j}\eta_{i} 
\\ \nabla(B\Psi)_1-\nabla(\tilde{\bm{u}}+\bm{\eta})\nabla \psi
\end{bmatrix},
\end{align*}
where $(B\Psi)_{l}$ means the $l$-th components of $B\Psi$.
Taking the inner product of this with ${}^t(2\nabla \cdot \hm{\eta},2\nabla\psi)$ leads to
\begin{gather}
\D_t\{(\nabla \cdot \bm{\eta})^2+|\nabla\psi|^2\}
+\sum_{j=1}^3 \D_{x_j}\left\{\!\left\langle\!
A^j[\tV+\Psi]
\begin{bmatrix}
\nabla \cdot \hm{\eta}
\\
\nabla \psi
\end{bmatrix},
\begin{bmatrix}
\nabla \cdot \hm{\eta}
\\
\nabla \psi
\end{bmatrix}
\!\right\rangle\!\right\}
=2(\Delta \sigma)(\nabla \cdot \bm{\eta})+{\mathcal R}_2,
\lb{bes3}\\
\begin{aligned}
{\mathcal R}_2:=&
\sum_{j=1}^3 \left\langle\!
\{\D_{x_j}(A^j[\tV+\Psi])\}
\begin{bmatrix}
\nabla \cdot \hm{\eta}
\\
\nabla \psi
\end{bmatrix},
\begin{bmatrix}
\nabla \cdot \hm{\eta}
\\
\nabla \psi
\end{bmatrix}
\!\right\rangle
+2\{\nabla(B\Psi)_1-\nabla(\tilde{\bm{u}}+\bm{\eta})\nabla \psi\}\cdot\nabla\psi
\\
&+2\left\{\D_{x_1}(B\Psi)_{2}
+\nabla \cdot \bm{h}
-\sum_{i,j=1}^3\{\D_{x_i}(\tu_j+\eta_j)\} \D_{x_j}\eta_{i}\right\}
(\nabla \cdot \bm{\eta}).
\end{aligned}
\notag
\end{gather}
To handle the terms having $\sigma$ 
on the right hand sides of \er{bes1} and \er{bes3},
we multiply \er{re2} by $2\nabla\cdot\bm{\eta}$ and rewrite the result as
\begin{align}
2(\Delta \sigma-\sigma)(\nabla\cdot\bm{\eta})
&=2K^{-1/2}\psi(\nabla\cdot\bm{\eta})
+2(g_0+g_1+g_2)(\nabla\cdot\bm{\eta})
\notag\\
&=2K^{-1}\psi\left(-\D_t\psi
-\sum_{j=1}^3 (\eta_j+\tu_j)\D_{x_j}\psi
+(B\Psi)_1\right)
+2(g_0+g_1+g_2)(\nabla\cdot\bm{\eta})
\notag\\
&=-K^{-1}\D_t(\psi^2)
-K^{-1}\sum_{j=1}^3\D_{x_j}\left\{(\eta_j+\tu_j)\psi^2 \right\}
+{\mathcal R}_3,
\lb{bes4}
\end{align}

\vspace{-4mm}

\[
{\mathcal R}_3:=K^{-1}\sum_{j=1}^3\left\{\D_{x_j}(\eta_j+\tu_j) \right\} \psi^2
+2K^{-1}\psi(B\Psi)_1+2(g_0+g_1+g_2)(\nabla\cdot\bm{\eta}),
\]
where we have also used the first component of \er{re1}
in deriving the second equality.

Furthermore, it is seen from \er{ses1}, \er{ellineq0}--\er{esg2}, 
Sobolev's and Schwarz's inequalities,
and $M \in H^\infty(\Om)$ that
\begin{gather}
|(\mathcal{R}_1,\mathcal{R}_2,\mathcal{R}_3)| 
\leq C(N_{m,\beta}(T)+|\phi_b|)|(\Psi,\nabla\Psi,\nabla\phi)|^2
+C|(\bm{h},g_2,\nabla\bm{h})||(\Psi,\nabla\Psi)|,
\lb{bes5}\\
|(\bm{h},g_2,\nabla\bm{h})| \leq C|\phi_b| e^{-\al( x_1-M(x_2,x_3))} |\nabla M|,
\lb{h1}
\end{gather}
where $C$ is a positive constant independent of $\beta$, $\phi_b$, and $t$.
From now on we estimate the $L^2$-norm of $\Psi$.

\begin{lem}\label{4.3}
Under the same assumption as in Proposition \ref{4.2},
it holds that 
\begin{equation}\lb{basic1}
\sup_{t\in[0,T]}\|\Psi(t)\|_{0,\beta}^2
\leq C\|\Psi_0\|_{1,\beta}^2
+\frac{C}{\beta}(N_{m,\beta}(T)+|\phi_b|)\sup_{t\in[0,T]}\|\nabla \Psi(t)\|_{0,\beta}^2
+\frac{C}{\beta}|\phi_b|,
\end{equation}
where $C$ is a positive constant independent of $\beta$, $\phi_b$, and $t$.
\end{lem}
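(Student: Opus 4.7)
The identity to integrate is obtained by combining the three pointwise equalities \er{bes1}, \er{bes3}, and \er{bes4}. Summing \er{bes1} and \er{bes3} produces the source $2(\Delta\sigma-\sigma)(\nabla\cdot\bm\eta)+\mathcal R_1+\mathcal R_2$, and \er{bes4} rewrites that source as $-K^{-1}\D_t(\psi^2)-K^{-1}\sum_j\D_{x_j}\{(\eta_j+\tu_j)\psi^2\}+\mathcal R_3$. One arrives at
\begin{equation*}
\D_t E+\sum_{j=1}^3\D_{x_j}F_j=\mathcal R_1+\mathcal R_2+\mathcal R_3,\qquad E:=|\Psi|^2+(\nabla\cdot\bm\eta)^2+|\nabla\psi|^2+K^{-1}\psi^2,
\end{equation*}
in which the first-coordinate flux is $F_1=\langle A^1\Psi,\Psi\rangle-2\sigma\eta_1+\langle A^1(\nabla\cdot\bm\eta,\nabla\psi)^\top,(\nabla\cdot\bm\eta,\nabla\psi)^\top\rangle+K^{-1}(\eta_1+\tu_1)\psi^2$. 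I would multiply this identity by the weight $e^{\beta x_1}$ and integrate over $\Om$; integration by parts in the fluxes produces a surface integral on $\D\Om$ together with the interior bulk $-\beta\int_\Om e^{\beta x_1}F_1\,dx$.

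\textbf{Boundary term and bulk dissipation.} On $\D\Om$, the Dirichlet condition \er{re5} kills the $-2\sigma\eta_j$ contributions, and the remaining boundary integrand equals $\sum_j n_j\langle A^j\Psi,\Psi\rangle+\sum_j n_j\langle A^j(\nabla\cdot\bm\eta,\nabla\psi)^\top,(\nabla\cdot\bm\eta,\nabla\psi)^\top\rangle+K^{-1}(\bm u\cdot\bm n)\psi^2$; both quadratic forms are nonnegative by \er{super1}, and $\bm u\cdot\bm n\geq\sqrt K>0$ by \er{asp1'}, so the whole surface integral is nonnegative and can be discarded. After replacing $\tV+\Psi$ by its endstate $(0,u_+,0,0)$ (with errors of size $O(N_{m,\beta}(T)+|\phi_b|)$ by \er{ses1}), the portion of $-F_1$ acting on $(\psi,\eta_1)$ reduces to
\begin{equation*}
(1+K^{-1})|u_+|\psi^2-2\sqrt K\,\psi\eta_1+|u_+|\eta_1^2+2\sigma\eta_1,
\end{equation*}
while the piece in the remaining components is already strictly positive definite from $|u_+|^2>K$.

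\textbf{Coupling Bohm with the elliptic estimate.} The surviving cross term $2\sigma\eta_1$ is the crux and the main obstacle. The sharp bound \er{ellineq1} reads $\|\sigma\|_{0,\beta}^2\leq\{K^{-1}+D\beta^2+C(N_{m,\beta}(T)+|\phi_b|)\}\|\psi\|_{0,\beta}^2+C|\phi_b|$, so modulo lower-order terms $\sigma$ effectively plays the role of $-K^{-1/2}\psi$ under integration. Applying Cauchy--Schwarz to $2\int e^{\beta x_1}\sigma\eta_1\,dx$ and substituting this bound produces the effective $(\psi,\eta_1)$-quadratic form with symmetric matrix
\begin{equation*}
\begin{bmatrix}(1+K^{-1})|u_+| & -(\sqrt K+K^{-1/2})\\ -(\sqrt K+K^{-1/2}) & |u_+|\end{bmatrix},
\end{equation*}
whose determinant equals $\frac{K+1}{K}\{|u_+|^2-(K+1)\}$ and is strictly positive exactly by the Bohm criterion \er{Bohm1}; this is precisely why \er{Bohm1} carries the extra $+1$ beyond $K$. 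Choosing $\beta$ small enough to kill the $D\beta^2$ correction and $\delta$ small enough to defeat the $O(N_{m,\beta}(T)+|\phi_b|)$ perturbation, I obtain
\begin{equation*}
-\beta\int_\Om e^{\beta x_1}F_1\,dx\geq c\beta\|\Psi\|_{0,\beta}^2+c\beta\|(\nabla\cdot\bm\eta,\nabla\psi)\|_{0,\beta}^2-C\beta|\phi_b|.
\end{equation*}

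\textbf{Remainders and Gronwall.} The $\mathcal R_i$'s are controlled via \er{bes5}: the terms having the small factor $N_{m,\beta}(T)+|\phi_b|$ are absorbed into the left-hand dissipation or bounded by $C(N_{m,\beta}(T)+|\phi_b|)\|\nabla\Psi\|_{0,\beta}^2$; the inhomogeneities satisfy $|(\bm h,g_2,\nabla\bm h)|\leq C|\phi_b|e^{-\al(x_1-M)}|\nabla M|$ by \er{h1}, and since $\beta\leq\al/2$ and $M\in H^\infty$ their $H^0_\beta$-norms are bounded by $C|\phi_b|$, so Young's inequality yields a contribution $\tfrac{c\beta}{2}\|\Psi\|_{0,\beta}^2+\tfrac{C}{\beta}|\phi_b|^2$. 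Setting $\tilde E(t):=\int_\Om e^{\beta x_1}E\,dx\geq\|\Psi(t)\|_{0,\beta}^2$ and collecting the estimates yields
\begin{equation*}
\frac{d}{dt}\tilde E(t)+c\beta\tilde E(t)\leq C(N_{m,\beta}(T)+|\phi_b|)\|\nabla\Psi(t)\|_{0,\beta}^2+\frac{C}{\beta}|\phi_b|^2.
\end{equation*}
Gronwall's lemma with the integrating factor $e^{c\beta t}$ gives $\sup_{[0,T]}\tilde E(t)\leq C\tilde E(0)+\tfrac{C(N_{m,\beta}(T)+|\phi_b|)}{\beta}\sup_{[0,T]}\|\nabla\Psi\|_{0,\beta}^2+\tfrac{C}{\beta^2}|\phi_b|^2$. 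Finally, $\tilde E(0)\leq C\|\Psi_0\|_{1,\beta}^2$ and $|\phi_b|^2/\beta^2\leq|\phi_b|/\beta$ (true once $\delta=\delta(\beta)$ is chosen small) combine to give \er{basic1}.
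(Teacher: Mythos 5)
Your proof is correct and follows essentially the same route as the paper: sum \eqref{bes1}--\eqref{bes4}, multiply by $e^{\beta x_1}$, discard the nonnegative boundary integrals (via \eqref{super1}, not \eqref{asp1'} as you cite — \eqref{super1} is the hypothesis of Proposition \ref{4.2} on the solution itself, and it makes the boundary quadratic forms nonnegative directly), bound the bulk dissipation $-\beta\int e^{\beta x_1}F_1$ below using \eqref{ellineq1} to convert $\sigma$ into $\psi$, and close with a weighted Gronwall argument. Your explicit $2\times 2$ matrix and its determinant $\tfrac{K+1}{K}\{|u_+|^2-(K+1)\}$ is exactly the content of the paper's term $\mathcal D$ and the lower bound $\mathcal D\geq d\|\Psi\|_{0,\beta}^2$ derived via Schwarz and \eqref{Bohm1}, just made more transparent; and your slightly more fastidious tracking of $|\phi_b|^2/\beta^2$ at the end reduces to the paper's $|\phi_b|/\beta$ once $\delta=\delta(\beta)$ is invoked, as you note.
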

\begin{proof}
Sum up \er{bes1}--\er{bes4},
multiply the result by $e^{\beta x_1}$, 
integrate it over $\Om$, 
and use Gauss's divergence theorem with \er{re4} 
to obtain
\begin{align}
{}&
\frac{d}{dt} \int_\Om e^{\beta x_1}
\left(|\Psi|^2+K^{-1}\psi^2
+(\nabla \cdot \bm{\eta})^2
+|\nabla \psi|^2\right)\,dx
\notag\\
&
\quad+\sum_{j=1}^3\int_{\D\Om} e^{\beta M(x_2,x_3)}\left(
\langle n_j A^j[\tV+\Psi]\Psi,\Psi \rangle 
+\left\langle\!
n_jA^j[\tV+\Psi]
\begin{bmatrix}
\nabla \cdot \hm{\eta}
\\
\nabla \psi
\end{bmatrix},
\begin{bmatrix}
\nabla \cdot \hm{\eta}
\\
\nabla \psi
\end{bmatrix}
\!\right\rangle\right)\,ds
\notag\\
&
\quad+\sum_{j=1}^3\int_{\D\Om} e^{\beta M(x_2,x_3)}n_j K^{-1}(\eta_j+\tu_j)\psi^2\,ds
\notag\\
&
\quad-\beta\int_{\Om} e^{\beta x_1}
(\langle A^1[\tV+\Psi]\Psi,\Psi \rangle
+K^{-1}(\eta_1+\tu_1)\psi^2
-2\sigma\eta_1)\,dx
\notag\\
&
\quad-\beta\int_{\Om} e^{\beta x_1}
\left\langle\!
A^1[\tV+\Psi]
\begin{bmatrix}
\nabla \cdot \hm{\eta}
\\
\nabla \psi
\end{bmatrix},
\begin{bmatrix}
\nabla \cdot \hm{\eta}
\\
\nabla \psi
\end{bmatrix}
\!\right\rangle\,dx
\notag\\
&=\int_{\Om} e^{\beta x_1}\left({\mathcal R}_1+{\mathcal R}_2+{\mathcal R}_3\right)\,dx
\notag\\
&\leq C(N_{m,\beta}(T)+|\phi_b|)\|\Psi\|_{1,\beta}^2+C|\phi_b|,
\lb{bes10}
\end{align}
where we have also used \er{ellineq4}, \er{bes5}, \er{h1}, 
$\beta\leq\alpha/2$, $M\in H^\infty(\Om)$, and Schwarz's inequality
in deriving the last inequality.

Let us estimate each terms on the left hand side from below separately.
The second term is nonnegative thanks to \er{super1}.
It can be shown by using \er{ses1} and $n_1u_+>0$ 
that the third term is also nonnegative as
\begin{align*}
{}&
\sum_{j=1}^3\int_{\D\Om} e^{\beta M(x_2,x_3)}n_j K^{-1}(\eta_j+\tu_j)\psi^2\,ds
\\
&\geq \int_{\D\Om} e^{\beta M(x_2,x_3)} n_1 K^{-1} u_+ \psi^2\,ds
-C(N_{m,\beta}(T)+|\phi_b|)\|\psi\|_{L^2(\D \Om)}^2
\\
& \geq 0.
\end{align*}
The last inequality follows from taking  $N_{m,\beta}(T)$ and $|\phi_b|$ small enough.
By using \er{ses1} and \er{ellineq1}, we estimate the fourth term as
\begin{align*}
{}&
-\beta\int_{\Om} e^{\beta x_1}
(\langle A^1[\tV+\Psi]\Psi,\Psi \rangle+K^{-1}(\eta_1+\tu_1)\psi^2
-2\sigma\eta_1)\,dx
\\
&\geq -\beta\int_{\Om} e^{\beta x_1}\{
(K^{-1}+1)u_+\psi^2+2\sqrt{K}\psi\eta_1+u_+|\bm{\eta}|^2-2\sigma\eta_1\}\,dx
\\
& \qquad
-C(N_{m,\beta}(T)+|\phi_b|)\|\Psi\|_{0,\beta}^2
\\
& \geq \beta{\mathcal D}
-\{2\sqrt{D}\beta^2+\mu+C[\mu](N_{m,\beta}(T)+|\phi_b|)\}\|\Psi\|_{0,\beta}^2
-C[\mu]|\phi_b|,
\end{align*}
where $\mu$ is a positive constant to be determined later and
\[
{\mathcal D}:= -\int_{\Om} e^{\beta x_1}\{
(K^{-1}+1)u_+\psi^2+2\sqrt{K}\psi\eta_1+u_+|\bm{\eta}|^2\}\,dx
-2 K^{-1/2}\|\psi\|_{0,\beta}\|\eta_1\|_{0,\beta}.
\]
By Schwarz's inequality and \er{Bohm1}, we see that
the term ${\mathcal D}$ is bounded from below as
\[
{\mathcal D} 
\geq -(K^{-1}+1)u_+\|\psi\|_{0,\beta}^2-
2 (K^{1/2}+K^{-1/2})\|\psi\|_{0,\beta}\|\eta_1\|_{0,\beta}
-u_+\|\bm{\eta}\|_{0,\beta}^2
\geq d\|\Psi\|_{0,\beta}^2,
\]
where $d$ is a positive constant independent of $\beta$, $\phi_b$, and $t$.
Furthermore, by \er{Bohm1} and \er{ses1}, one can estimate the fifth term as
\begin{align*}
{}&
-\beta\int_{\Om} e^{\beta x_1}
\left\langle\!
A^1[\tV+\Psi]
\begin{bmatrix}
\nabla \cdot \hm{\eta}
\\
\nabla \psi
\end{bmatrix},
\begin{bmatrix}
\nabla \cdot \hm{\eta}
\\
\nabla \psi
\end{bmatrix}
\!\right\rangle\,dx
\\
&\geq -\beta\int_{\Om} e^{\beta x_1}
(u_+(\nabla \cdot \bm{\eta})^2+2\sqrt{K}(\nabla \cdot \bm{\eta}) \D_{x_1}\psi+u_+|\nabla \psi|^2)\,dx
-C(N_{m,\beta}(T)+|\phi_b|)\|\Psi\|_{1,\beta}^2
\\
& \geq \beta d\|(\nabla \cdot \bm{\eta},\nabla \psi)\|_{0,\beta}^2
-C(N_{m,\beta}(T)+|\phi_b|)\|\Psi\|_{1,\beta}^2.
\end{align*}
All terms on the left hand side except the first term 
has been estimated from below.

Substituting the above estimates into \er{bes10} leads to
\begin{align*}
{}&
\frac{d}{dt} \int_\Om e^{\beta x_1}
( |\Psi|^2+K^{-1}\psi^2
+ (\nabla \cdot \bm{\eta})^2
+ |\nabla \psi|^2) \,dx
+d\beta\|(\Psi,\nabla \psi,\nabla \cdot \bm{\eta})\|_{0,\beta}^2
\notag\\
&\leq  2\sqrt{D}\beta^2\|\Psi\|_{0,\beta}^2
+\mu\|\Psi\|_{0,\beta}^2
+C[\mu](N_{m,\beta}(T)+|\phi_b|)\|\Psi\|_{1,\beta}^2+C[\mu]|\phi_b|.
\end{align*}
To absorb the first term on the right hand side into the second term on the left hand side,
\footnote{We remark that here is only one place to choose $\beta$ suitably small and
hereafter we never change $\beta$ in the proofs of Theorems \ref{2.1} and \ref{2.2}.}{let}
us fix $\beta>0$ so small that 
\begin{equation}\lb{defbeta1}
\beta \leq \min\{\al/2,d(4\sqrt{D})^{-1}\}.
\end{equation}
Then taking $\mu$, $N_{m,\beta}(T)$, and $|\phi_b|$ suitably small yields
\begin{align*}
{}&
\frac{d}{dt} \int_\Om e^{\beta x_1}
(|\Psi|^2+K^{-1}\psi^2
+(\nabla \cdot \bm{\eta})^2
+|\nabla \psi|^2) \,dx
+c\beta\|(\Psi,\nabla \psi,\nabla \cdot \bm{\eta})\|_{0,\beta}^2
\notag\\
&\leq C(N_{m,\beta}(T)+|\phi_b|)\|\nabla\Psi\|_{0,\beta}^2
+C|\phi_b|.
\end{align*}
Furthermore, multiplying this by $e^{\tilde{c} \beta t}$
and taking $\tilde{c}>0$ small enough, we have
\begin{align*}
{}&
e^{\tilde{c} \beta t}\|(\Psi,\nabla \psi,
\nabla \cdot \bm{\eta})(t)\|_{0,\beta}^2
+c\beta\int_0^t e^{\tilde{c} \beta \tau}
\|(\Psi,\nabla \psi,\nabla \cdot \bm{\eta})(\tau)\|_{0,\beta}^2\,d\tau
\notag \\
&\leq C\|\Psi_0\|_{1,\beta}^2
+\int_0^t e^{\tilde{c}\beta \tau}
\left(C(N_{m,\beta}(T)+|\phi_b|)\|\nabla\Psi(\tau)\|_{0,\beta}^2
+C|\phi_b|\right)\,d\tau
\notag \\
&\leq C\|\Psi_0\|_{1,\beta}^2
+\left(C(N_{m,\beta}(T)+|\phi_b|)
\sup_{t\in[0,T]}\|\nabla\Psi(t)\|_{0,\beta}^2
+C|\phi_b|\right)
\frac{1}{\tilde{c} \beta}(e^{\tilde{c} \beta t}-1),
\end{align*}
which immediately gives \er{basic1}.
\end{proof}

\subsection{Higher-order estimate}\lb{S4.3}
We estimate the higher order derivatives of $\Psi$ in this subsection.
Applying the operator $\D_x^{\bm{a}}$ with $|\bm{a}|=k$ for $k=1,\ldots,m$
to \er{re0}, we have
\begin{gather}
\D_t(\D_x^{\bm{a}}\Psi)+
\sum_{j=1}^3A^j[\tV+\Psi]\D_{x_j}(\D_x^{\bm{a}}\Psi)=
\begin{bmatrix}
0 \\ \D_x^{\bm{a}}\bm{h}
\end{bmatrix}
+I^{\bm{a}},
\lb{hes1}\\
I^{\bm{a}}:=\sum_{j=1}^3[\D_x^{\bm{a}},A^j[\tV+\Psi]]\D_{x_j}\Psi
+\begin{bmatrix}
0 \\ \D_x^{\bm{a}}\nabla \sigma
\end{bmatrix}
+\D_x^{\bm{a}}\left(B\Psi\right),
\notag
\end{gather}
where $[\D_x^{\bm{a}},\cdot]$ denotes a commutator.
Owing to \er{ses1}, $\beta\leq \alpha/2$, and $M\in H^\infty(\Om)$, it holds that
\begin{equation}\lb{h2}
\|\bm{h}\|_{m,\beta} \leq C|\phi_b|. 
\end{equation}
We also see from \er{eqiv0}, \er{ses1} and \er{ellineq4}
with the aid of  \er{A2} and \er{A4} in Appendix \ref{Appendix1} that
\begin{equation}\lb{I1}
\|I^{\bm{a}}\|_{0,\beta} 
\leq C(N_{m,\beta}(T)+|\phi_b|)\|\nabla^k\Psi\|_{0,\beta}
+C\|\Psi\|_{k-1,\beta}. 
\end{equation}

Let us now estimate the higher order derivatives of $\Psi$.
\begin{lem}\label{4.4}
Under the same assumption as in Proposition \ref{4.2},
it holds that 
\begin{equation}\lb{higher1}
\sup_{t\in[0,T]}\|\nabla^k\Psi(t)\|_{0,\beta}^2
\leq C(\|\Psi_0\|_{k,\beta}^2
+\sup_{t\in[0,T]}\|\Psi(t)\|_{k-1,\beta}^2+|\phi_b|)
\quad \text{$k = 1,\ldots,m$},
\end{equation}
where $C>0$ is a constant depending on $\beta$ but independent of $\phi_b$ and $t$.
\end{lem}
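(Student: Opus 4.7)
My plan is to derive \er{higher1} by a weighted $L^2$ energy estimate for the differentiated symmetric hyperbolic system \er{hes1}, parallel to the $L^2$ estimate of Lemma \ref{4.3}. Since $\beta$ has already been fixed in \er{defbeta1}, the constants in the present lemma are allowed to depend on $\beta$, so the bookkeeping is lighter than in subsection \ref{S4.2}.

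Fix $1\leq k\leq m$ and a multi-index $\bm{a}$ with $|\bm{a}|=k$. The first step is to take the $\mathbb{R}^4$-inner product of \er{hes1} with $2e^{\beta x_1}\D_x^{\bm{a}}\Psi$ and integrate over $\Om$. By the symmetry of $A^j[\tV+\Psi]$ and Gauss's theorem, the transport term produces three contributions: (i) a boundary integral on $\D\Om$ which is nonnegative by \er{super1}; (ii) a bulk contribution $-\beta\int_\Om e^{\beta x_1}\langle A^1[\tV+\Psi]\D_x^{\bm{a}}\Psi,\D_x^{\bm{a}}\Psi\rangle\,dx$, which is bounded below by $c\beta\|\D_x^{\bm{a}}\Psi\|_{0,\beta}^2$ by the Bohm criterion \er{Bohm1}, exactly as in the treatment of the fifth term in the proof of Lemma \ref{4.3}; and (iii) a bulk term $\int_\Om e^{\beta x_1}\langle\{\sum_j\D_{x_j}A^j[\tV+\Psi]\}\D_x^{\bm{a}}\Psi,\D_x^{\bm{a}}\Psi\rangle\,dx$ which, thanks to \er{ses1}, is of size $C(N_{m,\beta}(T)+|\phi_b|)\|\D_x^{\bm{a}}\Psi\|_{0,\beta}^2$ and is absorbed into (ii) under smallness. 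For the source of \er{hes1} I would apply Schwarz's inequality together with \er{h2} and \er{I1}; the piece of \er{I1} carrying the top-order factor $\|\nabla^k\Psi\|_{0,\beta}$ comes with the small prefactor $N_{m,\beta}(T)+|\phi_b|$ and is again absorbed by (ii).

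Summing over $|\bm{a}|=k$ leaves the differential inequality
\[
\frac{d}{dt}\sum_{|\bm{a}|=k}\|\D_x^{\bm{a}}\Psi\|_{0,\beta}^2
+c\beta\|\nabla^k\Psi\|_{0,\beta}^2
\leq C\|\Psi\|_{k-1,\beta}^2+C|\phi_b|,
\]
where $C$ may depend on $\beta$. Multiplying by $e^{\tilde c\beta t}$ for a small $\tilde c>0$ and integrating on $[0,t]$, in the same way as at the end of the proof of Lemma \ref{4.3}, converts the dissipative term on the left into a uniform-in-$t$ supremum and yields \er{higher1}.

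The only genuine piece of bookkeeping is contained in the commutator estimate already packaged in \er{I1}: the commutator $[\D_x^{\bm{a}},A^j[\tV+\Psi]]\D_{x_j}\Psi$ has to be split via \er{A2} and \er{A4} so that its honest top-order piece carries the small prefactor $N_{m,\beta}(T)+|\phi_b|$, while the contribution $\D_x^{\bm{a}}\nabla\sigma$, nominally of order $k+1$ in $\sigma$, is exchanged for order $k-1$ in $\psi$ plus $|\phi_b|$ through the elliptic estimate \er{ellineq4}. I do not anticipate any obstacle beyond this standard verification.
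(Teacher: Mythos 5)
Your proposal follows essentially the same route as the paper: differentiate the system, take the weighted inner product with $2e^{\beta x_1}\D_x^{\bm{a}}\Psi$, use the boundary positivity from \er{super1} and the Bohm sign from \er{Bohm1} to generate the dissipative term $c\beta\|\nabla^k\Psi\|_{0,\beta}^2$, control the source via \er{h2} and \er{I1} (with the elliptic gain from \er{ellineq4} hiding inside \er{I1}), and integrate the resulting differential inequality against an exponential weight in $t$. The minor difference in the choice of $e^{\tilde c\beta t}$ versus the paper's $e^{\tilde c t}$ is immaterial once constants are allowed to depend on $\beta$, which you correctly note.
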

\begin{proof}
Take an inner product of \er{hes1} with $2e^{\beta x_1}\D_x^{\bm{a}}\Psi$,
and sum up the results for $\bm{a}$ with $|\bm{a}|=k$.
Then integrate the resultant equality by parts over $\Om$
and apply Gauss's divergence theorem to obtain
\begin{align}
{}&
\frac{d}{dt} \sum_{|\bm{a}|=k}\int_\Om e^{\beta x_1} |\D_x^{\bm{a}}\Psi|^2 \,dx
+\sum_{|\bm{a}|=k}\sum_{j=1}^3\int_{\D\Om} e^{\beta M(x_2,x_3)}
\langle n_j A^j[\tV+\Psi]\D_x^{\bm{a}}\Psi,\D_x^{\bm{a}}\Psi \rangle\,ds
\notag\\
&\quad
-\beta\sum_{|\bm{a}|=k}\int_{\Om} e^{\beta x_1}
\langle A^1[\tV+\Psi]\D_x^{\bm{a}}\Psi,\D_x^{\bm{a}}\Psi \rangle\,dx
\notag\\
&=\sum_{|\bm{a}|=k}\int_{\Om} e^{\beta x_1} \left(
\sum_{j=1}^3 \langle \{\D_{x_j}(A^j[\tV+\Psi])\}\D_x^{\bm{a}}\Psi,\D_x^{\bm{a}}\Psi \rangle
+\D_x^{\bm{a}}\bm{h}\cdot\D_x^{\bm{a}} \bm{\eta}
+\langle I^{\bm{a}}, \D_x^{\bm{a}}\Psi \rangle
\right) \,dx
\notag\\
&\leq C(N_{m,\beta}(T)+|\phi_b|+\mu)\|\nabla^k\Psi\|_{0,\beta}^2
+C[\mu]\|\Psi\|_{k-1,\beta}^2+C|\phi_b|,
\lb{hes4}
\end{align}
where we have used \er{ses1}, \er{h2}, \er{I1}, and Schwarz's inequality 
in deriving the last inequality.
Owing to \er{super1}, 
the second term on the left hand side is nonnegative and thus negligible.
The third term is bounded from below as
\begin{align}
{}&
-\beta\sum_{|\bm{a}|=k}\int_{\Om} e^{\beta x_1}
\langle A^1[\tV+\Psi]\D_x^{\bm{a}}\Psi,\D_x^{\bm{a}}\Psi \rangle \,dx
\notag\\
& \geq -\beta\sum_{|\bm{a}|=k}\int_{\Om} e^{\beta x_1}\{
(u_+(\D_x^{\bm{a}}\psi)^2
+2\sqrt{K}(\D_x^{\bm{a}}\psi)(\D_x^{\bm{a}}\eta_1)
+u_+|\D_x^{\bm{a}}\bm{\eta}|^2\}\,dx
\notag\\
&\qquad -C(N_{m,\beta}(T)+|\phi_b|)\|\nabla^k\Psi\|_{0,\beta}^2
\notag\\
& \geq c\beta \|\nabla^{k}\Psi\|_{0,\beta}^2
-C(N_{m,\beta}(T)+|\phi_b|)\|\nabla^k\Psi\|_{0,\beta}^2.
\lb{hes5}
\end{align}
The last inequality follows from \er{Bohm1}.

Substitute \er{hes5} into \er{hes4} and 
take $\mu$, $N_{m,\beta}(T)$, and $|\phi_b|$ suitably small to obtain
\begin{align*}
\frac{d}{dt}\sum_{|\bm{a}|=k} \int_\Om e^{\beta x_1} |\D_x^{\bm{a}}\Psi|^2 \,dx
+c\|\nabla^k \Psi\|_{0,\beta}^2
\leq C\|\Psi\|_{k-1,\beta}^2+C|\phi_b|.
\end{align*}
Multiplying this by $e^{\tilde{c} t}$ 
and letting $\tilde{c}>0$ be small enough, we have
\begin{align*}
{}&
e^{\tilde{c} t}\|\nabla^k\Psi(t)\|_{0,\beta}^2
+c\int_0^t e^{\tilde{c}\tau}  \|\nabla^k\Psi(\tau)\|_{0,\beta}^2 \,d\tau
\\
&\leq C\|\Psi_0\|_{k,\beta}^2
+C\int_0^t e^{\tilde{c}\tau}(
\|\Psi(\tau)\|_{k-1,\beta}^2+|\phi_b|) \,d\tau
\\
&\leq C\|\Psi_0\|_{k,\beta}^2
+C\left(\sup_{t \in [0,T]}\|\Psi(t)\|_{k-1,\beta}^2+|\phi_b|\right)
\frac{1}{\tilde{c}}\left(e^{\tilde{c} t}-1\right).
\end{align*}
This immediately completes \er{higher1}.
\end{proof}

\subsection{Completion of a priori estimate}\lb{S4.4}
We now complete the derivation of the a priori estimate \er{apes2}.
\begin{proof}[Proof of Proposition \ref{4.2}]
We begin by proving that
\begin{equation}\lb{apes0}
\sup_{t\in[0,T]} \|\Psi(t)\|_{m,\beta}^2 \leq C(\|\Psi_0\|_{m,\beta}^2+|\phi_b|).  
\end{equation}
Substituting \er{higher1} with $k=1$ into the right hand side of \er{basic1} and 
taking $N_{m,\beta}(T)$ and $|\phi_b|$ sufficiently small, we have
$\sup_{t\in[0,T]}\|\Psi(t)\|_{0,\beta}^2 \leq C(\|\Psi_0\|_{1,\beta}^2+|\phi_b|)$.
Then substituting this into the right hand side of \er{higher1} with $k=1$ leads to
$\sup_{t\in[0,T]}\|\Psi(t)\|_{1,\beta}^2 \leq C(\|\Psi_0\|_{1,\beta}^2+|\phi_b|)$.
Furthermore, the induction by using this and \er{higher1} yields \er{apes0}.

Note that the derivation of \er{apes2} is completed by showing that
\begin{gather}
\|\D_t\Psi(t)\|_{l,\beta}^2 \leq C\|\Psi(t)\|_{l+1,\beta}^2+C|\phi_b|^2,
\quad \text{for $l=0,\ldots,m-1$,}
\lb{hypineq3}
\end{gather}
because \er{ellineq4}, \er{apes0}, and \er{hypineq3} immediately give \er{apes2}.
Let us prove \er{hypineq3} for $l=0$. 
Multiply \er{re1} by $e^{\beta x_1/2}$, 
take the $L^2$-norm, and use \er{ellineq4} and \er{h2} to obtain
\begin{align*}
\|\partial_t \Psi\|_{0,\beta}
= \left\|\sum_{j=1}^3A^j[\tV+\Psi]\D_{x_j}\Psi
-\begin{bmatrix}
0 \\ \nabla \sigma
\end{bmatrix}
-B\Psi
-\begin{bmatrix}
0 \\ \bm{h}
\end{bmatrix} \right\|_{0,\beta}
\leq  C\|\Psi(t)\|_{1,\beta}+C|\phi_b|.
\end{align*}
Similarly, we deduce \er{hypineq3} for all $l\geq1$ by using \er{hes1}.
\end{proof}

\section{Construction of stationary solutions}\lb{S5}
This section is devoted to the construction of 
solutions $(\Psi^s,\sigma^s)$ to 
the stationary problem corresponding to \er{re0}.
It is to be expected from the bound \er{apes1} of time-global solutions $(\Psi,\sigma)$
that these global solutions may converge to 
some functions as $t$ tends to infinity.
Therefore, we define an sequence $\{(\Psi^k,\sigma^k)\}_{k=0}^\infty$ by
$(\Psi^k,\sigma^k)(t,x):=(\Psi,\sigma)(t+kT_*,x)$ for any $T^*>0$, and show that
this sequence converges to a time-periodic solution with a period $T^*$ 
to the problem of equations \er{re1} and \er{re2} 
with boundary conditions \er{re4} and \er{re5} in subsection \ref{S5.1}.
By using the arbitrary of period, 
it can be concluded in subsection \ref{S5.2} that 
the periodic solution is independent of time and 
thus the desired stationary solution.
The stability is also shown in subsection \ref{S5.3}.
It is reasonable to treat the time-periodic solution once,
because we need some convergence of the time derivative of $\Psi$
in passing to the limit in equations \er{re1}, 
but we may not be able to show directly that 
the time derivative converges to zero.

\subsection{Time-periodic solutions}\lb{S5.1}
\subsubsection{Uniqueness}\lb{S5.1.1}
We begin by studying the uniqueness of time-periodic solutions 
to problem \er{re1}--\er{re5} in the solution space 
\[
{\cal X}^m_{\beta}([0,T^*]):=
\left[L^\infty([0,T^*];H^{m}_{\beta}(\Om)) 
\cap  W^{1,\infty}([0,T^*];H^{m-1}_{\beta}(\Om))\right]^4
\times
C([0,T^*];H^{m+1}_{\beta}(\Om)).
\]
The uniqueness is summarized in the following proposition.
\begin{pro}\lb{5.1}
Let $u_+$ satisfy \er{Bohm1} and \er{asp1'}.
For $\beta>0$ being in Theorem \ref{4.1},
there exists $\delta_0>0$ such that 
if a time-periodic solution $(\Psi^*,\sigma^*) \in {\cal X}^3_{\beta}([0,T^*])$
with a period $T^*>0$ to problem \er{re1}--\er{re5}
exists and satisfies the following inequality, then it is unique.
\begin{equation}\lb{uniasp1}
\sup_{t\in[0,T^*]}(\|\Psi^*(t)\|_{3,\beta}
+\|\partial_t \Psi^*(t)\|_{2,\beta}+\|\sigma^*(t)\|_{4,\beta})
+|\phi_b| \leq \delta_0.
\end{equation}
\end{pro}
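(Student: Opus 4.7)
The plan is to set up the difference of two candidate periodic solutions, rerun the weighted energy argument of Section \ref{S4} on it, and close via periodicity. Let $(\Psi^*_i,\sigma^*_i)\in{\cal X}^3_{\beta}([0,T^*])$, $i=1,2$, be two such solutions both obeying \er{uniasp1}, and set $(\Phi,\Sigma):=(\Psi^*_1-\Psi^*_2,\sigma^*_1-\sigma^*_2)$. Since the inhomogeneities $\bm{h}[\tV,\tV',\nabla M]$ in \er{re1} and $g_2[\tphi',\nabla M]$ in \er{re2} depend only on background quantities, they cancel upon subtraction. The remaining pieces $A^j[\tV+\Psi^*_1]-A^j[\tV+\Psi^*_2]$, $g_0[\psi^*_1,\tv]-g_0[\psi^*_2,\tv]$, and $g_1[\sigma^*_1,\tphi]-g_1[\sigma^*_2,\tphi]$ are all pointwise bounded by $C\delta_0\,|(\Phi,\Sigma)|$ via the mean value theorem and \er{uniasp1}. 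Hence $(\Phi,\Sigma)$ solves a linearization-type hyperbolic--elliptic system around $(\Psi^*_1,\sigma^*_1)$ with homogeneous boundary datum $\Sigma|_{\D\Om}=0$ and decay at infinity.

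Next I would replay the weighted energy estimates of Section \ref{S4} on this difference system. Multiplying the hyperbolic difference equation by $2e^{\beta x_1}\Phi$ and integrating as in Lemma \ref{4.3}, the boundary integral is nonnegative by \er{super1}, the bulk term produced by differentiating the weight is coercive thanks to the Bohm criterion \er{Bohm1} with the same $\beta$ fixed in \er{defbeta1}, and all other contributions are of size $C\delta_0\|(\Phi,\nabla\Phi)\|_{0,\beta}^2$ by \er{uniasp1}. The elliptic counterpart of Lemma \ref{ell1} applied to the difference equation for $\Sigma$ (which now has vanishing boundary data and no $g_2$ source) gives $\|\Sigma\|_{k+2,\beta}\le C\|\Phi\|_{k,\beta}$. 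Commuting $\D_x^{\bm a}$ for $|\bm a|\le 3$ through the equation as in Lemma \ref{4.4} and inducting on the order in the manner of subsection \ref{S4.4} yields, for $\delta_0$ sufficiently small,
\[
\dt E(t)+c\,E(t)\le 0,\qquad E(t):=\|\Phi(t)\|_{3,\beta}^2+\|\Sigma(t)\|_{4,\beta}^2.
\]

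Gronwall's inequality then gives $E(t)\le e^{-ct}E(0)$ for every $t\ge 0$, while the assumed periodicity with period $T^*$ forces $E(kT^*)=E(0)$ for every $k\in\bbn$; letting $k\to\infty$ yields $E(0)=0$, so $\Phi\equiv 0$ and $\Sigma\equiv 0$ on $[0,T^*]$. The principal technical obstacle is closing the top-order estimate with only the $H^3_{\beta}$-regularity provided by \er{uniasp1}: the commutator $[\D_x^{\bm a},A^j[\tV+\Psi^*_1]]\D_{x_j}\Phi$ with $|\bm a|=3$ produces a term of the form $(\D_x^{\bm a}\Psi^*_1)\cdot\Phi$, which must be controlled by $\|\nabla^3\Psi^*_1\|_{0,\beta}\|\Phi\|_{L^\infty}\le C\delta_0\|\Phi\|_{3,\beta}$ using the three-dimensional embedding $H^2\hookrightarrow L^\infty$ applied to $\Phi$, without ever demanding a fourth derivative of $\Psi^*_1$. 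Once this bookkeeping is in place, the dissipative structure combined with periodicity delivers the uniqueness immediately.
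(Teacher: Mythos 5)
Your high-level strategy—take the difference of two periodic solutions, cancel the $\bm h$ and $g_2$ sources, run weighted energy estimates, and close via periodicity—matches the paper's. But your proposed closure at full $H^3$ order contains a genuine gap. The difference system you write down carries the linearization-type term $\sum_j \bar\eta_j\,\D_{x_j}\Psi^*_1$ (coming from $A^j[\tV+\Psi^*_1]\D_{x_j}\Psi^*_1 - A^j[\tV+\Psi^*_2]\D_{x_j}\Psi^*_2$). When you apply $\D_x^{\bm a}$ with $|\bm a|=3$ to this, the Leibniz expansion includes $\bar\eta_j\,\D_x^{\bm a}\D_{x_j}\Psi^*_1$, which involves $\nabla^4\Psi^*_1$. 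Since the hypothesis \eqref{uniasp1} places $\Psi^*_i$ only in $H^3_\beta$, this term is not in $L^2$, and the top-order energy you propose ($E(t)=\|\Phi(t)\|_{3,\beta}^2+\|\Sigma(t)\|_{4,\beta}^2$) cannot be closed. The ``bookkeeping'' you gesture at for the commutator $[\D_x^{\bm a},A^j]\D_{x_j}\Phi$ (which really produces $(\nabla^3\Psi^*_1)\cdot\nabla\Phi$, not $(\nabla^3\Psi^*_1)\cdot\Phi$) is the harmless part; the loss of a derivative in the $\bar\eta_j\D_{x_j}\Psi^*$ term is the fatal one and is not addressed by any embedding.

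The paper avoids this entirely by noticing that uniqueness requires only $\bar\Psi=0$ in $L^2$, so it never estimates the difference above first order. It derives two differential inequalities — a zeroth-order one of the form $\frac{d}{dt}[\cdots] + c\beta\|(\bar\Psi,\nabla\bar\psi,\nabla\cdot\bar{\bm\eta})\|_{0,\beta}^2 \leq C\delta_0\|\nabla\bar\Psi\|_{0,\beta}^2$, and a first-order one $\frac{d}{dt}[\cdots] + c\|\nabla\bar\Psi\|_{0,\beta}^2 \leq C\|\bar\Psi\|_{0,\beta}^2$ — and then integrates each over the full period $[0,T^*]$ so that the $\frac{d}{dt}$ contributions vanish exactly by periodicity. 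This yields the coupled integral inequalities \eqref{unibasic1}--\eqref{unihigher1}; substituting one into the other and taking $\delta_0$ small gives $\int_0^{T^*}\|\bar\Psi\|_{0,\beta}^2\,dt = 0$, whence $\bar\Psi\equiv 0$, and then $\bar\sigma\equiv 0$ from the elliptic estimate. At first order the dangerous term only produces $\nabla^2\Psi^*$, which is controlled by \eqref{uniasp1}. If you want to repair your proposal, stop at $|\bm a|=1$, drop the Gronwall/exponential-decay step, and use the period-integral cancellation instead.
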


Let $(\Psi,\sigma)$ and $(\Psi^*,\sigma^*)$ 
be time-periodic solutions with \er{uniasp1}, where
$\Psi={}^{t}(\psi,\bm{\eta})$ and $\Psi^*={}^{t}(\psi^*,\bm{\eta}^*)$.
It is straightforward to see 
that $\bar{\Psi}={}^{t}(\bar{\psi},\bar{\bm{\eta}}):={}^{t}(\psi-\psi^*,\bm{\eta}-\bm{\eta}^*)$ 
and $\bar{\sigma}:=\sigma-\sigma^*$ satisfy
\begin{subequations}\lb{unieq0}
\begin{gather}
\D_t\bar{\Psi}
+\sum_{j=1}^3A^j[\tV+\Psi]\D_{x_j}\bar{\Psi}
=\begin{bmatrix}
0 \\ \nabla \bar{\sigma}
\end{bmatrix}
+B[\tV',\nabla M]\bar{\Psi}
-\sum_{j=1}^3\bar{\eta}_j\D_{x_j}\Psi^*,
\lb{unieq1}
\\
\Delta \bar{\sigma}-\bar{\sigma}=K^{-1/2}\bar{\psi}
+g_0[\psi,\tv]-g_0[\psi^*,\tv]
+g_1[\sigma,\tphi]-g_1[\sigma^*,\tphi],
\lb{unieq2}
\\
\lim_{|x|\to\infty}(\bar{\Psi},\bar{\sigma})(x)=0, 
\lb{unieq3}
\\
\bar{\sigma}(M(x_2,x_3),x_2,x_3)=0.
\lb{unieq4}
\end{gather}
\end{subequations}
Note that the essential difference between systems \er{re1} and \er{unieq1}
is only the rightmost of these equations.
For equations \er{re2} and \er{unieq2}, 
the terms $g_0[\psi,\tv]$, $g_1[\phi,\tphi]$, and $g_2[\tphi',\nabla M]$ 
are just replaced by $g_0[\psi,\tv]-g_0[\psi^*,\tv]$, 
$g_1[\sigma,\tphi]-g_1[\sigma^*,\tphi]$, and zero, respectively.
Therefore, the calculations in Section \ref{S4} also work for \er{unieq0}
by adjusting them slightly.

For the proof of Proposition \ref{5.1}, we first derive 
estimates of $\bar{\sigma}$.
\begin{lem}\lb{uniell1}
Under the same assumption as in Proposition \ref{5.1}, it holds that
\begin{gather}
\|\bar{\sigma}(t)\|_{1,\beta}^2 
\leq (K^{-1}+D\beta^2+C\delta_0) \|\bar{\psi}(t)\|_{0,\beta}^2,
\lb{uniellineq1}\\
\|\bar{\sigma}(t)\|_{2,\beta}^2 \leq C \|\bar{\psi}(t)\|_{0,\beta}^2,
\lb{uniellineq4}
\end{gather}
where $D$ is the same positive constant being in Lemma \ref{ell1} and
$C$ is a positive constant independent of $\beta$, $\phi_b$, and $t$.
\end{lem}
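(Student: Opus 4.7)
The plan is to mirror the proof of Lemma \ref{ell1}, since \er{unieq2} has exactly the same elliptic operator $\Delta - 1$ and the same homogeneous boundary condition at $\D\Om$ as \er{re2}, while the right hand side is structurally simpler: the inhomogeneity analogous to $g_2[\tphi',\nabla M]$ is absent (hence no $|\phi_b|$ appears in the bounds), and the nonlinear terms enter only through differences $g_0[\psi,\tv]-g_0[\psi^*,\tv]$ and $g_1[\sigma,\tphi]-g_1[\sigma^*,\tphi]$, which should be controlled by $\delta_0$ times $\|\bar\psi\|_{0,\beta}$ or $\|\bar\sigma\|_{0,\beta}$ thanks to \er{uniasp1}.

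First I would derive the pointwise bounds on the differences. Writing
\[
g_0[\psi,\tv]-g_0[\psi^*,\tv]
=\int_0^1 \D_\psi g_0[\psi^*+\tau\bar\psi,\tv]\,d\tau \cdot \bar\psi,
\]
and similarly for $g_1$, and using the explicit form of $g_0$, $g_1$ together with \er{ses1}, \er{uniasp1}, and Sobolev's inequality applied to $\psi,\psi^*,\sigma,\sigma^*$, one obtains
\[
|g_0[\psi,\tv]-g_0[\psi^*,\tv]|\leq C(|\psi|+|\psi^*|+|\phi_b|)|\bar\psi|,
\qquad
|g_1[\sigma,\tphi]-g_1[\sigma^*,\tphi]|\leq C(|\sigma|+|\sigma^*|+|\phi_b|)|\bar\sigma|,
\]
which combined with \er{uniasp1} yields
$\|g_0[\psi,\tv]-g_0[\psi^*,\tv]\|_{0,\beta}\leq C\delta_0\|\bar\psi\|_{0,\beta}$ and
$\|g_1[\sigma,\tphi]-g_1[\sigma^*,\tphi]\|_{0,\beta}\leq C\delta_0\|\bar\sigma\|_{0,\beta}$,
the analogs of \er{g0}--\er{g1} in the present setting.

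Next I would repeat the two-step energy computation used for \er{ellineq1}. Multiplying \er{unieq2} by $e^{\beta x_1}\bar\sigma$, integrating over $\Om$, integrating by parts using \er{unieq4} and \er{unieq3}, and handling the drift term $\beta e^{\beta x_1}(\D_{x_1}\bar\sigma)\bar\sigma$ by moving a factor of $\beta/2$, one obtains a rough bound $\|\bar\sigma\|_{1,\beta}^2\leq C\|\bar\psi\|_{0,\beta}^2$ after absorbing the $\delta_0$-small contributions from the $g_0,g_1$ differences and using $\beta\leq 1/2$. Then, running the same integration by parts a second time but separating the term $K^{-1/2}\bar\psi\bar\sigma$ via Schwarz as
\[
\int_\Om e^{\beta x_1}K^{-1/2}\bar\psi\bar\sigma\,dx
\leq \tfrac{1}{2}\|\bar\sigma\|_{0,\beta}^2+\tfrac{1}{2K}\|\bar\psi\|_{0,\beta}^2,
\]
and using the preliminary bound to control the remaining $g_0,g_1$ contributions and the $\beta^2$ boundary-term contribution (which produces precisely the $D\beta^2$ on the right), yields \er{uniellineq1}.

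For \er{uniellineq4}, I would apply Lemma \ref{Gell1} exactly as in \er{elleq1}--\er{elleq4}: multiplying \er{unieq2} by $e^{\beta x_1/2}$ to rewrite it as an elliptic equation for $e^{\beta x_1/2}\bar\sigma$ with homogeneous Dirichlet data on $\D\Om$, then estimating
\[
\|e^{\beta x_1/2}\bar\sigma\|_2
\leq C\bigl(\|e^{\beta x_1/2}\bar\sigma\|_1+\|e^{\beta x_1/2}\bar\psi\|_0
+\|g_0[\psi,\tv]-g_0[\psi^*,\tv]\|_{0,\beta}+\|g_1[\sigma,\tphi]-g_1[\sigma^*,\tphi]\|_{0,\beta}\bigr),
\]
combined with \er{eqiv0}, the rough bound from the previous paragraph, and the difference estimates, giving \er{uniellineq4}. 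The only mildly delicate point is to track that no term forces a $|\phi_b|$ to appear on the right hand side, since here the $g_2$ contribution is absent; everything else is a routine adaptation of the one-solution case.
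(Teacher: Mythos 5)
Your proposal is correct and takes essentially the same route as the paper: the paper's proof simply says to repeat the argument of Lemma \ref{ell1} with $N_{m,\beta}(T)+|\phi_b|$ replaced by $\delta_0$, $g_2$ replaced by zero, and the stated difference bounds $\|g_0[\psi,\tv]-g_0[\psi^*,\tv]\|_{0,\beta}\leq C\delta_0\|\bar\psi\|_{0,\beta}$, $\|g_1[\sigma,\tphi]-g_1[\sigma^*,\tphi]\|_{0,\beta}\leq C\delta_0\|\bar\sigma\|_{0,\beta}$ in place of \er{g0}--\er{g1}, which is exactly what you do (and you correctly supply the mean-value-theorem derivation of those difference bounds). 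One small imprecision: the $D\beta^2$ term is not a boundary contribution but comes from the bulk integral $\frac{\beta^2}{2}\int_\Om e^{\beta x_1}\bar\sigma^2\,dx$ (itself produced by two successive integrations by parts of the drift term), estimated via the rough bound $\|\bar\sigma\|_{0,\beta}^2\leq C\|\bar\psi\|_{0,\beta}^2$.
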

\begin{proof}
This follows by the same method as in the proof of Lemma \ref{ell1}.
Indeed, we only need to replace $(N_{m,\beta}(T)+|\phi_b|)$ and $g_2$ 
by $\delta_0$ and zero, respectively, and use the inequalities
\begin{gather*}
\|g_0[\psi,\tv]-g_0[\psi^*,\tv]\|_{0,\beta}
\leq C \delta_0 \|\bar{\psi}\|_{0,\beta}, 
\quad
\|g_1[\sigma,\tphi]-g_1[\sigma^*,\tphi]\|_{0,\beta} 
\leq C \delta_0 \|\bar{\sigma}\|_{0,\beta}
\end{gather*}
instead of \er{g0} and \er{g1}.
\end{proof}

We are now at a position to show Proposition \ref{5.1}.

\begin{proof}[Proof of Proposition \ref{5.1}]
It suffices to prove $\bar{\Psi}=0$,
since this and \er{uniellineq4} lead to $\bar{\sigma}=0$.
To this end, we only need to show
\begin{gather}
\int_0^{T^*} \|\bar{\Psi}(\tau)\|_{0,\beta}^2\,d\tau
\leq C\delta_0 \int_0^{T^*} \|\nabla\bar{\Psi}(\tau)\|_{0,\beta}^2\,d\tau,
\lb{unibasic1}\\
\int_0^{T^*} \|\nabla \bar{\Psi}(\tau)\|_{0,\beta}^2\,d\tau
\leq C\int_0^{T^*} \|\bar{\Psi}(\tau)\|_{0,\beta}^2\,d\tau.
\lb{unihigher1}
\end{gather}
In fact, one can deduce $\bar{\Psi}=0$ 
by substituting \er{unihigher1} into the right hand side of \er{unibasic1}
and taking $\de_0$ sufficiently small.

Let us first derive \er{unibasic1}.
In much the same way as the derivation of the equality in \er{bes10} from \er{re0}, 
we see from \er{unieq0} that
\begin{align}
{}&
\frac{d}{dt} \int_\Om e^{\beta x_1}
\left(|\bar{\Psi}|^2+K^{-1}\bar{\psi}^2
+(\nabla \cdot \bar{\bm{\eta}})^2
+|\nabla \bar{\psi}|^2\right)\,dx
\notag\\
&
\quad+\sum_{j=1}^3\int_{\D\Om} e^{\beta M(x_2,x_3)}\left(
\langle n_j A^j[\tV+\Psi]\bar{\Psi},\bar{\Psi} \rangle 
+\left\langle\!
n_jA^j[\tV+\Psi]
\begin{bmatrix}
\nabla \cdot \bar{\hm{\eta}}
\\
\nabla \bar{\psi}
\end{bmatrix},
\begin{bmatrix}
\nabla \cdot \bar{\hm{\eta}}
\\
\nabla \bar{\psi}
\end{bmatrix}
\!\right\rangle\right)\,ds
\notag\\
&
\quad+\sum_{j=1}^3\int_{\D\Om} e^{\beta M(x_2,x_3)}n_j K^{-1}(\eta_j+\tu_j)\bar{\psi}^2\,ds
\notag\\
&
\quad-\beta\int_{\Om} e^{\beta x_1}
(\langle A^1[\tV+\Psi]\bar{\Psi},\bar{\Psi} \rangle
+K^{-1}(\eta_1+\tu_1)\bar{\psi}^2
-2\bar{\sigma}\bar{\eta}_1)\,dx
\notag\\
&
\quad-\beta\int_{\Om} e^{\beta x_1}
\left\langle\!
A^1[\tV+\Psi]
\begin{bmatrix}
\nabla \cdot \bar{\hm{\eta}}
\\
\nabla \bar{\psi}
\end{bmatrix},
\begin{bmatrix}
\nabla \cdot \bar{\hm{\eta}}
\\
\nabla \bar{\psi}
\end{bmatrix}
\!\right\rangle\,dx
\notag\\
&=\int_{\Om} e^{\beta x_1}\bar{\mathcal R}\,dx,
\lb{unibes10}
\end{align}
where $\bar{\mathcal R}$ is defined as
\begin{align*}
\bar{\mathcal R}:=&
\sum_{j=1}^3 \langle \{\D_{x_j}(A^j[\tV+\Psi])\}\bar{\Psi},\bar{\Psi} \rangle
+2\langle B\bar{\Psi}, \bar{\Psi}\rangle
-2\sum_{j=1}^3\langle \bar{\eta}_j\D_{x_j}\Psi^*, \bar{\Psi}\rangle
\\
& +\sum_{j=1}^3 \!\left\langle\!
\{\D_{x_j}(A^j[\tV+\Psi])\}
\begin{bmatrix}
\nabla \cdot \bar{\hm{\eta}}
\\
\nabla \bar{\psi}
\end{bmatrix},
\begin{bmatrix}
\nabla \cdot \bar{\hm{\eta}}
\\
\nabla \bar{\psi}
\end{bmatrix}
\!\right\rangle\!
+2\{\nabla(B\bar{\Psi})_1
-\nabla(\tilde{\bm{u}}+\bm{\eta})\nabla \bar{\psi}
-\nabla\bar{\bm{\eta}}\nabla \psi^*\}
\cdot\nabla\bar{\psi}
\\
&+2\left\{\D_{x_1}(B\bar{\Psi})_{2}
-\sum_{i,j=1}^3\{\D_{x_i}(\tu_j+\eta_j)\} \D_{x_j}\bar{\eta}_{i}
-\sum_{i,j=1}^3(\D_{x_i}\bar{\eta}_j) \D_{x_j}\eta^*_{i}\right\}
(\nabla \cdot \bar{\bm{\eta}})
\\
&+2\sum_{j=1}^3\!\left\langle\!
\bar{\eta}_j
\begin{bmatrix}
\D_{x_j}\nabla \cdot \hm{\eta}^* \\ \D_{x_j}\nabla\psi^*
\end{bmatrix},
\begin{bmatrix}
\nabla \cdot \bar{\hm{\eta}}
\\
\nabla \bar{\psi}
\end{bmatrix}
\!\right\rangle\!
+K^{-1}\sum_{j=1}^3\left\{\D_{x_j}(\eta_j+\tu_j) \right\} \bar{\psi}^2
+2K^{-1}\bar{\psi}(B\bar{\Psi})_1
\\
&-2K^{-1}\bar{\psi}\bar{\bm{\eta}}\cdot\nabla\psi^*
+2(g_0[\psi,\tv]-g_0[\psi^*,\tv]
+g_1[\sigma,\tphi]-g_1[\sigma^*,\tphi])(\nabla\cdot\bar{\bm{\eta}}).
\end{align*}

Note that we must be careful to handle the terms 
having $\psi^*$ and $\bm{\eta}^*$ in $\bar{\mathcal R}$,
since some of these include the second-order derivatives.
Using \er{ses1} and \er{uniasp1}, we estimate $\bar{\mathcal R}$ as
\begin{equation}
|\bar{\mathcal R}|\leq
C\delta_0|(\bar{\Psi},\bar{\sigma},\nabla\bar{\Psi})|^2
+C|\bar{\Psi}||\nabla\bar{\Psi}||\nabla^2{\Psi^*}|.  
\end{equation}
Then Sobolev's and H\"older's inequalities give
\begin{align}\lb{unibes9}
\left|\int_{\Om} e^{\beta x_1}\bar{\mathcal R}\,dx\right|
\leq C\delta_0(\|\bar{\Psi}\|_{1,\beta}^2+\|\bar{\sigma}\|_{1,\beta}^2)
\leq C\delta_0\|\bar{\Psi}\|_{1,\beta}^2,
\end{align}
where we have also used \er{uniellineq4} in deriving the last inequality.

On the other hand, we notice that the left hand side of \er{unibes10} has 
the same form as that of \er{bes10}.
Therefore, the second and third terms are nonnegative and so negligible 
if $\de_0$ is sufficiently small.
Furthermore, with the aid of \er{uniellineq1}, 
the fourth and fifth terms are bounded from below as
\begin{align*}
{}&
-\beta\int_{\Om} e^{\beta x_1}
(\langle A^1[\tV+\Psi]\bar{\Psi},\bar{\Psi} \rangle
+K^{-1}(\eta_1+\tu_1)\bar{\psi}^2
-2\bar{\sigma}\bar{\eta}_1)\,dx
\\
&\geq \beta d\|\bar{\Psi}\|_{0,\beta}^2
-(2\sqrt{D}\beta^2+\mu+C[\mu]\de_0)\|\bar{\Psi}\|_{0,\beta}^2
\end{align*}
and
\begin{gather*}
-\beta\int_{\Om} e^{\beta x_1}
\left\langle\!
A^1[\tV+\Psi]
\begin{bmatrix}
\nabla \cdot \bar{\hm{\eta}}
\\
\nabla \bar{\psi}
\end{bmatrix},
\begin{bmatrix}
\nabla \cdot \bar{\hm{\eta}}
\\
\nabla \bar{\psi}
\end{bmatrix}
\!\right\rangle\,dx 
\geq \beta d\|(\nabla \bar{\psi},\nabla \cdot \bar{\bm{\eta}})\|_{0,\beta}^2
-C\de_0\|\Psi\|_{1,\beta}^2,
\end{gather*}
where $d$ and $D$ are the same positive constants as in \er{defbeta1}.
Substitute these inequalities and \er{unibes9} into \er{unibes10}, use \er{defbeta1}, 
and let $\mu$ and $\de_0$ be small enough to obtain
\begin{align}
\frac{d}{dt} \int_\Om e^{\beta x_1}
(|\bar{\Psi}|^2+K^{-1}\bar{\psi}^2
+(\nabla \cdot \bar{\bm{\eta}})^2
+|\nabla \bar{\psi}|^2) \,dx
+c\beta\|(\bar{\Psi},\nabla \bar{\psi},\nabla \cdot \bar{\bm{\eta}})\|_{0,\beta}^2
\leq C\de_0\|\nabla\bar{\Psi}\|_{0,\beta}^2.
\lb{unibes12}
\end{align}
Then integrating this over $[0,T^*]$ and using the periodicity of solutions, 
we conclude \er{unibasic1}.

Let us complete the proof by showing \er{unihigher1}.
Apply $\D_x^{\bm{a}}$ with $|\bm{a}|=1$ to \er{unieq1},
take an inner product of this with $2e^{\beta x_1}\D_x^{\bm{a}}\bar{\Psi}$,
and sum up the results for $\bm{a}$ with $|\bm{a}|=1$.
Then integrating the resultant equality over $\Om$ 
and applying Gauss's divergence theorem, we have
\begin{align}
{}&
\frac{d}{dt} \sum_{|\bm{a}|=1}\int_\Om e^{\beta x_1}
|\D_x^{\bm{a}}\bar{\Psi}|^2 \,dx
+\sum_{|\bm{a}|=1}\sum_{j=1}^3\int_{\D\Om} e^{\beta M(x_2,x_3)}
\langle n_j A^j[\tV+\Psi]\D_x^{\bm{a}}\bar{\Psi},\D_x^{\bm{a}}\bar{\Psi} \rangle\,ds
\notag\\
&\quad
-\beta\sum_{|\bm{a}|=1}\int_{\Om} e^{\beta x_1}
\langle A^1[\tV+\Psi]\D_x^{\bm{a}}\bar{\Psi},\D_x^{\bm{a}}\bar{\Psi} \rangle\,dx
\notag\\
&=\sum_{|\bm{a}|=1}\int_{\Om} e^{\beta x_1} \left(
\sum_{j=1}^3 \langle \{\D_{x_j}(A^j[\tV+\Psi])\}\D_x^{\bm{a}}\bar{\Psi},\D_x^{\bm{a}}\bar{\Psi} \rangle
+2(\nabla\D_x^{\bm{a}}\bar{\sigma})\cdot\D_x^{\bm{a}}\bar{\bm{\eta}}
+2\langle \D_x^{\bm{a}}(B\bar{\Psi}), \D_x^{\bm{a}}\bar{\Psi} \rangle
\right.
\notag\\
&\mspace{150mu} \left.-2\sum_{j=1}^3\langle\{\D_x^{\bm{a}}(A^j[\tV+\Psi])\}\D_{x_j}\bar{\Psi},\D_x^{\bm{a}}\bar{\Psi} \rangle
-2\sum_{j=1}^3\langle\D_{x}^{\bm{a}}(\bar{\eta}_j\D_{x_j}\Psi^*),\D_x^{\bm{a}}\bar{\Psi} \rangle\right)\,dx
\notag\\
&\leq (C\delta_0+\mu)\|\bar{\Psi}\|_{1,\beta}^2
+C[\mu]\|\nabla^2\bar{\sigma}\|_{0,\beta}^2
\notag\\
&\leq (C\delta_0+\mu)\|\nabla\bar{\Psi}\|_{0,\beta}^2
+C[\mu]\|\bar{\Psi}\|_{0,\beta}^2, 
\lb{unihes4}
\end{align}
where we have estimated the right hand side of 
the above equality similarly to \er{unibes9}.
The left hand side of the equality in \er{unihes4}
has the same form as that of \er{hes4}.
Therefore, the second term on the left hand side is nonnegative.
The third term is bounded from below as
\begin{align*}
-\beta\sum_{|\bm{a}|=1}\int_{\Om} e^{\beta x_1}
\langle A^1[\tV+\Psi]\D_x^{\bm{a}}\bar{\Psi},\D_x^{\bm{a}}\bar{\Psi} \rangle \,dx
\geq c\beta \|\nabla\bar{\Psi}\|_{0,\beta}^2
-C\de_0\|\nabla\bar{\Psi}\|_{0,\beta}^2.
\end{align*}
Substitute this into \er{unihes4} and
let $\mu$ and $\de_0$ be sufficiently small to get
\begin{equation}\lb{unihes5}
\frac{d}{dt} \sum_{|\bm{a}|=1}\int_\Om e^{\beta x_1}|\D_x^{\bm{a}}\bar{\Psi}|^2 \,dx
+c\|\nabla\bar{\Psi}\|_{0,\beta}^2
\leq C\|\bar{\Psi}\|_{0,\beta}^2. 
\end{equation}
Then integrating this over $[0,T_*]$ and 
using the periodicity of solutions, we conclude \er{unihigher1}.
\end{proof}

\subsubsection{Existence}\lb{S5.1.2}
For the construction of time-periodic solutions, we define
\[
(\Psi^k,\sigma^k)(t,x):=(\Psi,\sigma)(t+kT^*,x)
\quad \text{for $k=1,2,3,\ldots$,} 
\]
where $(\Psi,\sigma)$ is the time-global solution in Theorem \ref{4.1}
and $\Psi^k$ denotes ${}^t(\psi^k,\bm{\eta}^k)$.
Let us start from discussing the next lemma.

\begin{lem}\lb{5.2}
Let $u_+$ satisfy \er{Bohm1} and \er{asp1'}.
For $\beta>0$ being in Theorem \ref{4.1} and any $T^*>0$,
there exists $\gamma>0$ and $C>0$ independent of $k$ and $T^*$ such that
\begin{equation}\lb{exiapes0}
\|(\Psi-\Psi^k)(t)\|_{1,\beta}
+\|(\sigma-\sigma^k)(t)\|_{2,\beta} \leq Ce^{-\gamma t}
\quad \text{for $k=1,2,3,\ldots$.}
\end{equation}
\end{lem}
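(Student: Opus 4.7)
The plan is to exploit the fact that, because $(\Psi,\sigma)$ and $(\Psi^k,\sigma^k)$ both solve the same system \er{re0}, the difference $(\bar\Psi^k,\bar\sigma^k):=(\Psi-\Psi^k,\sigma-\sigma^k)$ solves a \emph{homogeneous} problem: the inhomogeneous terms $\bm{h}$ and $g_2$ depend only on $\tV$, $\tphi$, and $M$, so they cancel under subtraction, and one checks that $(\bar\Psi^k,\bar\sigma^k)$ satisfies precisely the system \er{unieq0} with $\Psi^*$ replaced by $\Psi^k$. The goal is therefore to run the energy calculation underlying Proposition \ref{5.1}, but to extract pointwise-in-time exponential decay in place of the periodicity-based averaged identity used there.

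By the a priori bound \er{apes1}, both $\Psi$ and $\Psi^k$ enjoy the smallness assumption \er{uniasp1} uniformly in $k$ and $t$. Consequently, the elliptic estimates of Lemma \ref{uniell1} transfer verbatim to $(\bar\Psi^k,\bar\sigma^k)$, yielding
\[
\|\bar\sigma^k(t)\|_{1,\beta}^2 \leq \bigl(K^{-1}+D\beta^2+C\delta_0\bigr)\|\bar\psi^k(t)\|_{0,\beta}^2,
\qquad
\|\bar\sigma^k(t)\|_{2,\beta}^2 \leq C\|\bar\psi^k(t)\|_{0,\beta}^2.
\]
Next, I rerun the weighted $L^2$ computation leading to \er{unibes12}, but without integrating in time. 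With
\[
E_0(t):=\|\bar\Psi^k\|_{0,\beta}^2+K^{-1}\|\bar\psi^k\|_{0,\beta}^2+\|\nabla\cdot\bar{\bm\eta}^k\|_{0,\beta}^2+\|\nabla\bar\psi^k\|_{0,\beta}^2,
\]
this produces
\[
\tfrac{d}{dt} E_0 + c\beta\,\|(\bar\Psi^k,\nabla\bar\psi^k,\nabla\cdot\bar{\bm\eta}^k)\|_{0,\beta}^2 \leq C\delta_0\,\|\nabla\bar\Psi^k\|_{0,\beta}^2.
\]
Similarly, the derivation of \er{unihes5}, applied with $\D_x^{\bm{a}}$ of order one, gives
\[
\tfrac{d}{dt} E_1 + c\,\|\nabla\bar\Psi^k\|_{0,\beta}^2 \leq C\,\|\bar\Psi^k\|_{0,\beta}^2,
\qquad
E_1(t):=\|\nabla\bar\Psi^k\|_{0,\beta}^2.
\]

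Forming the combined functional $F(t):=E_0(t)+\kappa E_1(t)$ and choosing $\kappa$ small relative to $\beta$ and then $\delta_0$ small relative to $\kappa$, both cross terms are absorbed and
\[
\tfrac{d}{dt} F + c'\bigl(\|\bar\Psi^k\|_{0,\beta}^2+\|\nabla\bar\Psi^k\|_{0,\beta}^2\bigr) \leq 0.
\]
Since $F$ is equivalent to $\|\bar\Psi^k\|_{1,\beta}^2$, one gets $\tfrac{d}{dt}F + 2\gamma F \leq 0$ for some $\gamma>0$ independent of $k$, $T^*$, and $t$. Grönwall yields $F(t)\leq F(0)e^{-2\gamma t}$, and the initial value is bounded uniformly via \er{apes1} applied at times $0$ and $kT^*$:
\[
\|\bar\Psi^k(0)\|_{1,\beta} \leq \|\Psi(0)\|_{1,\beta}+\|\Psi(kT^*)\|_{1,\beta} \leq C.
\]
Combining with the elliptic bound $\|\bar\sigma^k\|_{2,\beta}\leq C\|\bar\psi^k\|_{0,\beta}$ delivers \er{exiapes0}.

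The main obstacle is bookkeeping. All the cross terms produced when $\D_x^{\bm{a}}$ hits the variable-coefficient matrix $A^j[\tV+\Psi]$, the operator $B$, and, most delicately, the convective correction $\bar\eta_j^k\,\D_{x_j}\Psi^k$ (which involves one derivative of $\Psi^k$ rather than of $\bar\Psi^k$), must be estimated using only $\sup_{t,k}\|\Psi^k(t)\|_{3,\beta} \leq C\sqrt{\delta_0}$ from Theorem \ref{4.1}, so that the resulting inequality is genuinely linear in $\bar\Psi^k$ and $\kappa$, $\delta_0$ can be chosen once and for all, independently of $k$ and $T^*$. Once that is verified—exactly as in the control of $\bar{\mathcal R}$ in \er{unibes9}, aided by Sobolev embedding applied to $\Psi^k \in H^3_\beta$—the differential inequality and Grönwall argument proceed without complications.
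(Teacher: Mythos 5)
Your proposal is correct and follows essentially the same route as the paper: both establish the analogs of the energy inequalities \eqref{unibes12} and \eqref{unihes5} for the homogeneous difference system satisfied by $(\Psi-\Psi^k,\sigma-\sigma^k)$, exploit the uniform-in-$k,t$ smallness supplied by \eqref{apes1} to control the delicate convective correction $\bar\eta_j\partial_{x_j}\Psi^k$ via $\|\Psi^k\|_{3,\beta}$, and close with the elliptic bound $\|\bar\sigma^k\|_{2,\beta}\lesssim\|\bar\psi^k\|_{0,\beta}$ from Lemma~\ref{uniell1}. The only (cosmetic) difference is in how the two differential inequalities are combined: you form a single Lyapunov functional $F=E_0+\kappa E_1$ and apply Gr\"onwall directly, whereas the paper multiplies each inequality by $e^{\tilde c t}$, integrates in time, and absorbs via substitution; these are interchangeable packagings of the same estimate.
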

\begin{proof}
We note that the time-global solution in Theorem \ref{4.1} satisfies \er{apes1}.
Therefore, by the same method as in the derivations of \er{unibes12} and \er{unihes5},
one can see that
\begin{align*}
{}&
\frac{d}{dt} \int_\Om e^{\beta x_1}
(|\Psi-\Psi^k|^2+K^{-1}|\psi-\psi^k|^2
+|\nabla \cdot(\bm{\eta}-\bm{\eta}^k)|^2
+|\nabla (\psi-\psi^k)|^2) \,dx
\\
&+c\beta\|(\Psi-\Psi^k,\nabla \cdot (\bm{\eta}-\bm{\eta}^k),\nabla(\psi-\psi^k))\|_{0,\beta}^2
\leq C(\|\Psi_0\|_{m,\beta}+|\phi_b|^{1/2})\|\nabla(\Psi-\Psi^k)\|_{0,\beta}^2
\end{align*}
and
\begin{gather*}
\frac{d}{dt} \sum_{|\bm{a}|=1}\int_\Om e^{\beta x_1}|\D_x^{\bm{a}}(\Psi-\Psi^k)|^2 \,dx
+c\|\nabla(\Psi-\Psi^k)\|_{0,\beta}^2
\leq C\|\Psi-\Psi^k\|_{0,\beta}^2. 
\lb{exihes5}
\end{gather*}
Then multiply these two by $e^{\tilde{c}t}$, 
integrate the results over $[0,T^*]$, 
and take $\tilde{c}>0$ suitably small to get
\begin{align*}
{}& e^{\tilde{c}t}\|(\Psi-\Psi^k)(t)\|_{0,\beta}^2
+\int_0^t e^{\tilde{c}\tau}\|(\Psi-\Psi^k)(\tau)\|_{0,\beta}^2\,d\tau
\\
&\leq C\|(\Psi-\Psi^k)(0)\|_{1,\beta}^2
+C(\|\Psi_0\|_{m,\beta}+|\phi_b|^{1/2})
\int_0^t e^{\tilde{c}\tau}\|\nabla(\Psi-\Psi^k)(\tau)\|_{0,\beta}^2\,d\tau
\end{align*}
and
\begin{align*}
{}& e^{\tilde{c}t}\|\nabla(\Psi-\Psi^k)(t)\|_{0,\beta}^2
+\int_0^t e^{\tilde{c}\tau}\|\nabla(\Psi-\Psi^k)(\tau)\|_{0,\beta}^2\,d\tau
\\
&\leq C\|(\Psi-\Psi^k)(0)\|_{1,\beta}^2
+C\int_0^t e^{\tilde{c}\tau}\|(\Psi-\Psi^k)(\tau)\|_{0,\beta}^2\,d\tau.
\end{align*}
From these two and \er{apes1},
we have the estimate of $\Psi-\Psi^k$ in \er{exiapes0}
by taking $\|\Psi_0\|_{m,\beta}$ and $|\phi_b|$ suitably small again if necessary.
Now it remains to obtain the estimate of $\sigma-\sigma^k$ in \er{exiapes0}.
The same proof as Lemma \ref{uniell1} works for $\sigma-\sigma^k$ and thus 
$\|\sigma-\sigma^k\|_{2,\beta}\leq C\|\psi-\psi^k\|_{0,\beta}$ holds.
This immediately completes the proof.
\end{proof}

We are now in a position to construct time-periodic solutions to 
problem \er{re1}--\er{re5}.
\begin{pro}\lb{5.3}
Let $m \geq 3$ and $u_+$ satisfy \er{Bohm1} and \er{asp1'}.
For $\beta>0$ being in Theorem \ref{4.1} and any $T^*>0$,
there exists a constant $\delta>0$ independent of $T^*$ such that 
if $|\phi_b| \leq \delta$, then the problem \er{re1}--\er{re5} 
has a time-periodic solution $(\Psi^*,\sigma^*)\in {\mathcal X}^m_\beta([0,T^*])$
with a period $T^*>0$. Furthermore, it satisfies
\begin{equation}\lb{apes3}
\sup_{t\in[0,T^*]}(\|\Psi^*(t)\|_{m,\beta}
+\|\partial_t \Psi^*(t)\|_{m-1,\beta}+\|\sigma^*(t)\|_{m+1,\beta})
\leq C|\phi_b|^{1/2},
\end{equation}
where $C>0$ is a constant independent of $T^*$.
\end{pro}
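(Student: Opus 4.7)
The plan is to construct $(\Psi^*,\sigma^*)$ as the limit of the translates $(\Psi^k,\sigma^k)(t,x):=(\Psi,\sigma)(t+kT^*,x)$, where $(\Psi,\sigma)$ is the time-global solution provided by Theorem~\ref{4.1} for the trivial initial datum $\Psi_0=0$. The choice $\Psi_0=0$ is admissible: the corresponding $(\rho_0,\bm{u}_0)=(\tro\circ\tilde{M},\tu\circ\tilde{M},0,0)$ satisfies $\bm{u}_0\cdot\nabla(M-x_1)=-\tu(0)$ on $\D\Om$, and Lemma~\ref{1.1} together with \er{asp1'} yields the supersonic condition \er{super1'} for $|\phi_b|$ sufficiently small. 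With this choice, the estimate \er{apes1} specializes to
\[
\sup_{t\ge 0}\bigl(\|\Psi(t)\|_{m,\beta}^2+\|\partial_t\Psi(t)\|_{m-1,\beta}^2+\|\sigma(t)\|_{m+2,\beta}^2\bigr)\le C|\phi_b|,
\]
whose square root is already the target bound \er{apes3}.

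Next, I combine Lemma~\ref{5.2} with a time shift: for fixed $t\in[0,T^*]$, setting $s=t+kT^*$ in that lemma gives
\[
\|\Psi^{k+1}(t)-\Psi^k(t)\|_{1,\beta}=\|\Psi(s+T^*)-\Psi(s)\|_{1,\beta}\le Ce^{-\gamma(t+kT^*)}\le Ce^{-\gamma kT^*},
\]
and similarly for $\sigma^{k+1}-\sigma^k$ in $H^2_\beta$. These bounds are summable in $k$, so $\{\Psi^k\}$ is Cauchy in $C([0,T^*];H^1_\beta(\Om))$ and $\{\sigma^k\}$ in $C([0,T^*];H^2_\beta(\Om))$, producing a limit $(\Psi^*,\sigma^*)$ in these spaces. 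Meanwhile, Theorem~\ref{4.1} provides a uniform bound for the sequence in $L^\infty([0,T^*];H^m_\beta)\cap W^{1,\infty}([0,T^*];H^{m-1}_\beta)$ and $L^\infty([0,T^*];H^{m+2}_\beta)$, respectively, so weak-$*$ compactness upgrades the limit to $(\Psi^*,\sigma^*)\in{\cal X}^m_\beta([0,T^*])$, with lower semicontinuity of norms under weak-$*$ convergence giving \er{apes3}. Interpolation between the strong convergence in $H^1_\beta$ (resp.\ $H^2_\beta$) and the uniform bound in $H^m_\beta$ (resp.\ $H^{m+2}_\beta$) then yields strong convergence in $C([0,T^*];H^{m-1}_\beta)$ (resp.\ $C([0,T^*];H^{m+1}_\beta)$), which for $m\ge 3$ embeds into $L^\infty_{t,x}$ and is enough to pass to the limit in all nonlinear expressions $A^j[\tV+\Psi^k]\partial_{x_j}\Psi^k$, $B\Psi^k$, $g_0[\psi^k,\tv]$, and $g_1[\sigma^k,\tphi]$. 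The supersonic condition \er{super1} survives under this uniform convergence.

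Periodicity is then immediate from the tautology $\Psi^k(T^*)=\Psi((k+1)T^*)=\Psi^{k+1}(0)$: letting $k\to\infty$ gives $\Psi^*(T^*)=\Psi^*(0)$, and analogously for $\sigma^*$. The main technical delicacy I anticipate is to ensure $\partial_t\Psi^*\in L^\infty([0,T^*];H^{m-1}_\beta)$, since strong convergence of $\Psi^k$ alone does not directly control time derivatives; however, once $(\Psi^*,\sigma^*)$ is known to satisfy \er{re1} pointwise in $t$, $\partial_t\Psi^*$ can be read off from the equation with every term on the right-hand side already under control in the desired norm, and the required bound then follows from the uniform bound on $\partial_t\Psi^k$ in $L^\infty_tH^{m-1}_\beta$ via weak-$*$ lower semicontinuity.
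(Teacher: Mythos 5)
Your proposal is correct and follows essentially the same route as the paper: define the shifted sequence $(\Psi^k,\sigma^k)(t,x)=(\Psi,\sigma)(t+kT^*,x)$, use Lemma~\ref{5.2} to show it is Cauchy in a low-regularity topology, combine with the uniform $H^m_\beta$-bound from Theorem~\ref{4.1} and interpolation/weak-$*$ compactness to upgrade the regularity of the limit, read off periodicity from the telescoping identity $\Psi^k(T^*)=\Psi^{k+1}(0)$, and recover $\partial_t\Psi^*$ from the equation. The one place where you diverge slightly is the derivation of \eqref{apes3}: you take $\Psi_0=0$ at the outset (and correctly verify that this datum satisfies the supersonic condition for small $|\phi_b|$), so the bound $C|\phi_b|^{1/2}$ drops out of \eqref{apes1} directly and no uniqueness argument is needed at this stage. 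The paper instead runs the construction for a general admissible $\Psi_0$, obtaining bounds involving $\|\Psi_0\|_{m,\beta}$, and then invokes Proposition~\ref{5.1} to identify that periodic solution with the one built from $\Psi_0=0$, from which \eqref{apes3} follows. Your shortcut is valid and slightly cleaner for this proposition in isolation; the paper's more roundabout phrasing is convenient because the same construction with a nonzero $\Psi_0$ is reused in the stability proof of Theorem~\ref{2.2}, where the $\Psi_0$-independence of the limit (via uniqueness) is exactly what is needed.
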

\begin{proof}
First of all, applying Theorem \ref{4.1} and Lemma \ref{5.2} to 
initial--boundary value problem \er{re0},
we have the time-global solution $(\Psi,\sigma)$ satisfying
\er{apes1} and \er{exiapes0}.
Let us start from proving that $\{(\Psi^k,\sigma^k)\}$ is a Cauchy sequence in the Banach space 
$[\cap_{i=0}^1C^i([0,T^*];H^{m-i-1}_\beta(\Om))]^4 \times C([0,T^*];H^{m+1}_\beta(\Om))$.
For the case $k>k'$, it follows from \er{exiapes0} that
\begin{align*}
\sup_{t \in [0,T^*]} \|(\Psi^k-\Psi^{k'},\sigma^k-\sigma^{k'})(t) \|_{0,\beta}
&=\sup_{t \in [k'T^*,(k'+1)T^*]} \|(\Psi-\Psi^{k-k'},\sigma-\sigma^{k-k'})(t)\|_{0,\beta}
\\
&\leq C e^{-\ga k'T^*}.
\end{align*}
We see from this and \er{apes1} 
with the aid of \er{eqiv0} and the Gagliardo--Nirenberg inequalities that
\begin{align}
\sup_{t \in [0,T^*]} \|(\Psi^k-\Psi^{k'})(t) \|_{m-1,\beta}
\leq  C e^{-\ga k'T^*/m},
\lb{exies3}\\
\sup_{t \in [0,T^*]} \|(\sigma^k-\sigma^{k'})(t) \|_{m+1,\beta}
\leq C e^{-\ga k'T^*/(m+2)}. 
\lb{exies4}
\end{align}
It remains to show that $\{\Psi^k\}$ is a Cauchy sequence 
in $C^1([0,T^*];H^{m-2}(\Om))$. 
It is straightforward to obtain from \er{re1} and \er{apes1} that
\begin{align*}
{}&
|\D_t (\Psi^k-\Psi^{k'})|
\\
&= \left|
\sum_{j=1}^3A^j[\tV+\Psi^k]\D_{x_j}(\Psi^k-\Psi^{k'})
-\begin{bmatrix}
0 \\ \nabla (\sigma^k-\sigma^{k'})
\end{bmatrix}
-B(\Psi^k-\Psi^{k'})
+\sum_{j=1}^3(\eta^k_j-\eta^{k'}_j)\D_{x_j}\Psi^{k'}
\right|
\\
& \leq C |((\Psi^k-\Psi^{k'}),\nabla(\Psi^k-\Psi^{k'}),\nabla(\sigma^k-\sigma^{k'}))|,
\end{align*}
which gives
\begin{equation*}
\|\D_t (\Psi^k-\Psi^{k'})\|_{0,\beta} \leq  C e^{-\ga k'T^*/(m+2)}.
\end{equation*}
This and \er{apes1} together with the Gagliardo-Nirenberg inequalities yield
\begin{align}
\sup_{t \in [0,T^*]} \|\D_t(\Psi^k-\Psi^{k'})(t) \|_{m-2,\beta}
\leq  C e^{-\ga k'T^*/\{(m+2)(m-1)\}}.
\notag
\end{align}
Therefore, $\{(\Psi^k,\sigma^k)\}$ is a Cauchy sequence 
and then there exists a limit  $(\Psi^*,\sigma^*)$ such that
\begin{equation}\lb{converge1}
(\Psi^k,\sigma^k) \to (\Psi^*,\sigma^*) \quad \text{in} \ \
\left[\bigcap_{i=0}^1C^i([0,T^*];H^{m-i-1}_\beta(\Om))\right]^4
\times C([0,T^*];H^{m+1}_\beta(\Om)).
\end{equation}

The limit $(\Psi^*,\sigma^*)$ obviously satisfies \er{re1}--\er{re5}.
Let us check that $(\Psi^*,\sigma^*)$ is a time-periodic function with the period $T^*$.
The sequences $(\Psi^k,\sigma^k)(T^*,x)$ and $(\Psi^{k+1},\sigma^{k+1})(0,x)$ 
converges to $(\Psi^*,\sigma^*)(T^*,x)$ and $(\Psi^*,\sigma^*)(0,x)$, respectively, 
as $k$ tends to infinity.
We notice that $(\Psi^k,\sigma^k)(T^*,x)=(\Psi^{k+1},\sigma^{k+1})(0,x)$ holds 
and so does $(\Psi^*,\sigma^*)(T^*,x)=(\Psi^*,\sigma^*)(0,x)$.
Consequently, $(\Psi^*,\sigma^*)$ is a time-periodic solution to problem \er{re1}--\er{re5}.

We complete the proof by showing that $(\Psi^*,\sigma^*)$ belongs to ${\cal X}^m_\beta(0,T)$
and satisfies \er{apes3}. 
The function $\sigma^*$ already has enough regularity and then \er{apes1} implies that
\begin{equation}\lb{exies7}
\sup_{t\in [0,T^*]}\|\sigma^*(t)\|_{m+1,\beta} 
\leq C(\|\Psi_0\|_{m,\beta}+|\phi_b|^{1/2}).
\end{equation}
On the other hand, by a standard method for hyperbolic systems 
(for instance, see \cite[Section 5]{Rac}), we see from \er{apes1} that
$\Psi^k(t)$ converges to $\Psi^*(t)$ weakly in $H^m_\beta(\Om)$ 
for each $t \in [0,T^*]$. It also holds that
\begin{equation}\lb{exies8}
\sup_{t\in [0,T^*]}\|\Psi^*(t)\|_{m,\beta} 
\leq C(\|\Psi_0\|_{m,\beta}+|\phi_b|^{1/2}),
\end{equation}
which means $\Psi^* \in L^\infty(0,T^*;H^{m}_\beta(\Om))$.
Similarly, it holds that
$\D_t \Psi^* \in L^\infty(0,T^*;H^{m-1}_\beta(\Om))$ and 
\begin{equation}\lb{exies9}
\sup_{t\in [0,T^*]}\|\D_t \Psi^*(t)\|_{m-1,\beta} 
\leq C(\|\Psi_0\|_{m,\beta}+|\phi_b|^{1/2}).
\end{equation}
Hence, the time-periodic solution $(\Psi^*,\sigma^*)$ belongs ${\cal X}^m_\beta(0,T)$
in which the uniqueness has been shown.
It remains to obtain \er{apes3}. For the initial data $\Psi_0=0$,
we have another time-periodic solution  by the above method.
Proposition \ref{5.1} together with estimates \er{exies7}--\er{exies9}
ensures that both periodic solutions are same.
Therefore, by plugging $\Psi_0=0$ into \er{exies7}--\er{exies9}, we have \er{apes3}.
\end{proof}

\subsection{Stationary solutions}\lb{S5.2}

We show that the time-periodic solutions constructed in Subsection \ref{S5.1}
are time-independent.

\begin{proof}[Proof of Theorem \ref{2.1}]
Proposition \ref{5.3} ensures the existence of
time-periodic solutions $(\Psi^*,\sigma^*)$
of problem \er{re1}--\er{re5} for any period $T^*$.
We remark that the smallness assumption for 
the boundary data $\phi_b$ is independent of the period $T^*$.
Hence, one can have time-periodic solutions $(\Psi^*,\sigma^*)$ 
with the period $T^*$ and
$(\Psi^*_l,\sigma^*_l)$ with the period $T^*/2^l$ for $l \in \mathbb N$ 
under the same assumption for $\phi_b$.
Furthermore, $(\Psi^*,\sigma^*)=(\Psi^*_l,\sigma^*_l)$ follows from 
Proposition \ref{5.1}, since both $(\Psi^*,\sigma^*)$ and $(\Psi^*_l,\sigma^*_l)$ are 
the time-periodic solutions with the period $T^*$ and satisfy \er{apes3}. 
Hence, we see that
\[
(\Psi^*,\sigma^*) \left(0,x\right)=
(\Psi^*,\sigma^*) \left(\frac{i}{2^l}T^*,x\right)
\quad \text{for $i=1,2,3,\ldots,2^l$ and $l=0,1,2,\ldots$.} 
\]
Because the set 
$\cup_{l \geq 0} \{{i}/{2^l} \ ; \ i=1,2,3,\ldots,2^l\}$
is dense in $[0,T^*]$,
we see from the continuity of $(\Psi^*,\sigma^*)$
that $(\Psi^*,\sigma^*)$ is independent of $t$.
Therefore, $(\Psi^s,\sigma^s)=(\Psi^*,\sigma^*)$
is the desired stationary solution. 
\end{proof}

\subsection{Stability in the exponential weighted Sobolev space}\lb{S5.3}
This subsection is devoted to the completion of the proof of Theorem \ref{2.2}.
Since the time-global solutions to problem \er{re0} has been constructed in Theorem \ref{4.1},
it suffices to show the asymptotic stability of stationary solutions.

\begin{proof}[Proof of Theorem \ref{2.2}]
Theorem \ref{4.1} and Lemma \ref{5.2} ensure that
initial--boundary value problem \er{re0} has 
a unique time-global solution satisfying \er{apes1} and \er{exiapes0}
if $\|\Psi_0\|_{m,\beta}$ and $|\phi_b|$ are small enough.
Passing to the limit $k\to \infty$ in \er{exiapes0},
we have $\|(\Psi-\Psi^s,\sigma-\sigma^s)(t)\|_{0,\beta} \leq C e^{-\ga t}$
thanks to \er{converge1} and $(\Psi^s,\sigma^s)=(\Psi^*,\sigma^*)$.
Then this inequality and \er{apes1} together with the Gagliardo-Nirenberg inequalities 
give the decay estimate \er{decay1}.
The proof is complete.
\end{proof}

\medskip

\noindent
{\bf Acknowledge.} 
This work was supported by JSPS KAKENHI Grant Numbers 26800067 and 18K03364.

\begin{appendix}
\section{Stability in the algebraic weighted Sobolev space}\lb{S6}
In this section, we give an outline of the proof of Theorem \ref{1.4}
which states the stability of stationary solutions $(\rho^s,\bm{u}^s,\phi^s)$ 
in the algebraic weighted Sobolev space.
By following \cite{NOS}, we introduce new functions 
\[
w^s:=\log\rho^s, \quad w:=\log\rho 
\]
and perturbations $(\overline{\psi},\overline{\bm{\eta}},\overline{\sigma})$ 
from the stationary solution:
\[
(\overline{\psi},\overline{\bm{\eta}},\overline{\sigma})
:=(w-w^s,\bm{u}-\bm{u}^s,\phi-\phi^s).
\]
Then it is seen that $(\overline{\psi},\overline{\bm{\eta}},\overline{\sigma})$ satisfies
the system of equations
 \begin{align*}
  \D_t\overline{\psi} + (\bm{u}^s+\overline{\bm{\eta}}) \cdot \nabla \overline{\psi} 
  + {\rm{div}} \overline{\bm{\eta}} + \overline{\bm{\eta}}\cdot\nabla w^s =& 0,
  \\
  \D_t\overline{\bm{\eta}} + (\bm{u}^s+\overline{\bm{\eta}}) \cdot \nabla \overline{\bm{\eta}} 
  +K \nabla \overline{\psi} - \nabla \overline{\sigma} + \overline{\bm{\eta}} \cdot \nabla \bm{u}^s =& 0,
  \\
  \Delta \overline{\sigma}
  -e^{(w^s+\overline{\psi})}+e^{w^s}
  +e^{-\phi^s-\overline{\sigma}}-e^{-\phi^s}=& 0, \quad
  (t,x)\in {\mathbb R}_+\times \Om
 \end{align*}
with the initial and boundary data
\begin{gather*}
  (\overline{\psi},\overline{\bm{\eta}})(0,x)=(\log\ro_0-w^s,\bm{u}_0-\bm{u}^s)(x),
  \\ 
  \lim_{|x|\rightarrow \infty} (\overline{\psi},\overline{\bm{\eta}},\overline\sigma)(t,x)=0,
  \quad \overline{\sigma}(t,M(x_2,x_3),x_2,x_3)=0.
\end{gather*}
\begin{proof}[Outline of proof of Theorem \ref{1.4}]
We rewrite the above initial--boundary value problem over $\Om$
to that over the half space 
$\mathbb R^3_+:=\{y=(y_1,y_2,y_3)\in\mathbb R^3 \, | \, y_1>0\}$ by changing variables
\begin{align*}
 y_1=x_1-M(x_2,x_3), \quad y_2=x_2, \quad y_3=x_3.
\end{align*}
The rewritten problem is given by the system
 \begin{align*}
  {}&
  \D_t\overline{\psi} + (\bm{u}^s+\overline{\bm{\eta}}) \cdot \nabla \overline{\psi} 
  + {\rm{div}} \overline{\bm{\eta}} + \overline{\bm{\eta}}\cdot\nabla w^s 
  \\
  &\qquad =(\partial_{y_1}\overline{\psi})\{(\bm{u}^s+\overline{\bm{\eta}}) \cdot \nabla M\}
   +(\partial_{y_1} \overline{\bm{\eta}})\cdot\nabla M
   +(\partial_{y_1}w^s)\{\overline{\bm{\eta}} \cdot \nabla M\},
  \\
  &\D_t\overline{\bm{\eta}} + (\bm{u}^s+\overline{\bm{\eta}}) \cdot \nabla \overline{\bm{\eta}} 
  +K \nabla \overline{\psi} - \nabla \overline{\sigma} + \overline{\bm{\eta}} \cdot \nabla \bm{u}^s 
  \\
  & \qquad = \{(\bm{u}^s+\overline{\bm{\eta}}) \cdot \nabla M\} \partial_{y_1} \overline{\bm{\eta}} 
  +K(\partial_{y_1}\overline{\psi})\nabla M
  -(\partial_{y_1}\overline{\sigma})\nabla M
  +(\overline{\bm{\eta}}\cdot\nabla M)\partial_{y_1}\bm{u}^s,
  \\
  &\Delta \overline{\sigma}
  -e^{(w^s+\overline{\psi})}+e^{w^s}
  +e^{-\phi^s-\overline{\sigma}}-e^{-\phi^s}
  \\ 
  &\qquad = \sum_{j=2}^3\left[\left(-(\partial_{y_j}M)\partial_{y_1}+\partial_{y_j}\right)
   ((\partial_{y_j}M)\partial_{y_1}\overline{\sigma})
   +(\partial_{y_j}M)\partial^2_{y_1y_j}\overline{\sigma}\right], \quad
  (t,y)\in {\mathbb R}_+\times {\mathbb R}^3_+
 \end{align*}
with the initial and boundary data
\begin{gather*}
  (\overline{\psi},\overline{\bm{\eta}})(0,y)=(\log\rho_0-w^s,\bm{u}_0-\bm{u}^s)(y),
  \quad 
  \lim_{|y|\rightarrow \infty} (\overline{\psi},\overline{\bm{\eta}},\overline\sigma)(t,y)=0,
  \quad 
  \overline{\sigma}(t,0,y_2,y_3)=0.
\end{gather*}
We remark that the left hand sides of the above three equations 
are essentially same as equations (1.13) in \cite{NOS}
and all terms of the right hand side have $\nabla M$.
Therefore, if $\|M\|_5 \ll 1$, by the method of 
the proof of Theorem 1.3 in \cite{NOS} with tiny modifications,
one can show that 
the solution $(\overline{\psi},\overline{\bm{\eta}},\overline{\sigma})$
to the rewritten problem 
exists globally in time and decays algebraically fast as $t$ tends to infinity.
These facts immediately verify Theorem \ref{1.4}.
\end{proof}

\section{General inequalities}\lb{Appendix1}

\begin{lem}
Let $l=0,1,2,\cdots$ and $\beta \in [0,1]$.
Suppose that $A \in {\cal B}^\infty(B(0,r))$, 
$A(0)=0$, and $\tilde{A} \in {\cal B}^{l+1}(\overline{\Om})$,
where $B(0,r)\subset \mathbb R^n$ denotes a ball
of center $O$ and radius $r \in (0,1]$.
If $f\in L^\infty(\Om) \cap H^l(\Om)$,
$g\in H^l_\beta(\Om)$, and $e^{\beta x_1/2}g \in L^\infty(\Om)$, it holds that
\begin{gather}
\|fg\|_{l,\beta}
\leq C(\|f\|_{L^\infty}\|g\|_{l,\beta}
+\|f\|_{l}\|e^{\beta x_1/2}g\|_{L^\infty}),
\lb{A1} \\
\|A(f)\|_l \leq C\|f\|_l 
\quad \text{if $\|f\|_{L^\infty} \leq r/2$}.
\lb{A3} 
\end{gather}
If $f,\nabla f\in L^\infty(\Om) \cap H^{l}(\Om)$,
$g\in H^l_\beta(\Om)$, and $e^{\beta x_1/2}g \in L^\infty(\Om)$,
the following inequalities on the commutator $[\nabla^l,\, \cdot\,]$ hold.
\begin{gather}
\|[\nabla^{l+1},f] g \|_{0,\beta} \leq 
C(\|\nabla f\|_{L^\infty}\|g\|_{l,\beta}
+\|\nabla f\|_{l}\|e^{\beta x_1/2}g\|_{L^\infty}),
\lb{A2} \\
\|[\nabla^{l+1},\tilde{A}] g \|_{0,\beta} 
\leq C \left(\sum_{i=1}^{l+1}\|\nabla^{i} \tilde{A}\|_{L^\infty} \right)
\|g\|_{l,\beta}.
\lb{A4} 
\end{gather}
Here $C$ is a positive constant independent of $f$, $g$, and $\beta$. 
\end{lem}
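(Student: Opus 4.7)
The plan is to reduce every weighted inequality to its unweighted Moser--type counterpart by exploiting the equivalence $\|g\|_{l,\beta} \approx \|e^{\beta x_1/2} g\|_l$ with constants independent of $\beta \in [0,1]$. This equivalence is essentially the content of \eqref{eqiv0}, and it extends to all derivatives since $\nabla^k e^{\beta x_1/2} = (\beta/2)^{k_1} e^{\beta x_1/2}$ involves only the constant $\beta/2 \le 1/2$; hence by Leibniz, $|\nabla^j(e^{\beta x_1/2} g)| \le C e^{\beta x_1/2} \sum_{k \le j}|\nabla^k g|$ with $C$ independent of $\beta$, and the reverse bound is analogous.

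For \eqref{A1}, I would set $\tilde g := e^{\beta x_1/2} g$, so that $e^{\beta x_1/2}(fg) = f\tilde g$. The classical Moser product estimate
\begin{equation*}
\|f\tilde g\|_l \le C(\|f\|_{L^\infty}\|\tilde g\|_l + \|f\|_l \|\tilde g\|_{L^\infty})
\end{equation*}
then yields \eqref{A1} after passing back through the equivalence above. The classical estimate itself is obtained by Leibniz's rule and Gagliardo--Nirenberg interpolation of $\|\nabla^k f\|_{L^{p_1}}\|\nabla^{l-k}\tilde g\|_{L^{p_2}}$ with $1/p_1 + 1/p_2 = 1/2$. Inequality \eqref{A3} is the standard composition estimate: by the Fa\`a di Bruno formula, $\nabla^l A(f)$ is a sum of terms $A^{(k)}(f)\prod_{j=1}^k \nabla^{i_j} f$ with $i_1 + \cdots + i_k = l$ and $i_j \ge 1$. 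Since $\|f\|_{L^\infty} \le r/2$ and $A \in \mathcal B^\infty(B(0,r))$, each $A^{(k)}(f)$ is bounded, and Gagliardo--Nirenberg controls the product $\prod \|\nabla^{i_j} f\|_{L^{p_j}}$ by $\|f\|_{L^\infty}^{1-1/k}\|\nabla^l f\|_{L^2}^{1/k}$. The base case $l=0$ uses $A(0)=0$ and the mean value theorem to give $|A(f)| \le C|f|$.

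For the commutator \eqref{A2}, Leibniz gives $[\nabla^{l+1},f]g = \sum_{k=1}^{l+1}\binom{l+1}{k}(\nabla^k f)(\nabla^{l+1-k} g)$, so every summand carries at least one derivative on $f$ and at most $l$ on $g$. Setting $\tilde g := e^{\beta x_1/2} g$ and applying the classical commutator Moser estimate
\begin{equation*}
\|[\nabla^{l+1},f]\tilde g\|_{L^2} \le C(\|\nabla f\|_{L^\infty}\|\tilde g\|_l + \|\nabla f\|_l \|\tilde g\|_{L^\infty})
\end{equation*}
(proved by the same Gagliardo--Nirenberg interpolation, keeping $k\ge 1$ on $f$ so that $\nabla^k f = \nabla^{k-1}(\nabla f)$) gives \eqref{A2} after the equivalence. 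Inequality \eqref{A4} is the simplest: Leibniz yields $[\nabla^{l+1},\tilde A]g = \sum_{k=1}^{l+1}\binom{l+1}{k}(\nabla^k \tilde A)(\nabla^{l+1-k} g)$, and pointwise control of each $\nabla^k\tilde A$ in $L^\infty$ gives
\begin{equation*}
\|(\nabla^k \tilde A)(\nabla^{l+1-k} g)\|_{0,\beta}
\le \|\nabla^k \tilde A\|_{L^\infty}\|\nabla^{l+1-k}g\|_{0,\beta}
\le \|\nabla^k \tilde A\|_{L^\infty}\|g\|_{l,\beta}.
\end{equation*}

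The only genuine subtlety is checking that the equivalence constant between $\|\cdot\|_{l,\beta}$ and $\|e^{\beta x_1/2}\cdot\|_l$ is $\beta$-uniform, and that the classical Moser constants depend only on $l$ and the dimension, not on $\beta$ or the particular functions involved. Both facts follow because $\beta \le 1$ makes every factor of $\beta$ that arises from differentiating the weight harmlessly bounded, and because Gagliardo--Nirenberg constants on $\Omega$ are geometric in nature. I do not anticipate a hard step; the proposal is essentially bookkeeping around the equivalence \eqref{eqiv0} and the classical Moser calculus.
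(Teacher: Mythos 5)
Your approach is the same as the paper's: reduce each weighted inequality to its unweighted Moser counterpart via the norm equivalence \eqref{eqiv0}, together with the observation that $e^{\beta x_1/2}$ commutes with $\nabla$ up to $\beta$-uniform lower-order corrections. Your treatments of \eqref{A1}, \eqref{A3}, and \eqref{A4} match what the paper does (the paper quotes Lemmas 4.8--4.9 of Racke for \eqref{A3} and the unweighted estimates \eqref{A6}--\eqref{A7} that underlie everything else).

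One point in your handling of \eqref{A2} needs tightening. As written, you assert that applying the classical commutator estimate $\|[\nabla^{l+1},f]\tilde g\|_{L^2}\le\cdots$ ``gives \eqref{A2} after the equivalence.'' But the equivalence \eqref{eqiv0} does not convert $\|[\nabla^{l+1},f]g\|_{0,\beta}$ into $\|[\nabla^{l+1},f]\tilde g\|_{L^2}$, because the weight does not commute with derivatives of $g$:
\begin{equation*}
e^{\beta x_1/2}[\nabla^{l+1},f]g
=\sum_{k=1}^{l+1}\binom{l+1}{k}\,(\nabla^k f)\,e^{\beta x_1/2}\nabla^{l+1-k}g
\neq [\nabla^{l+1},f]\tilde g .
\end{equation*}
The paper closes precisely this gap by first proving, by induction, that $e^{\beta x_1/2}\D^{\bm{a}-\bm{b}}g$ is a linear combination of $\D^{\bm{c}}(e^{\beta x_1/2}g)$ with $\bm{c}\le\bm{a}-\bm{b}$ and $\beta$-bounded coefficients, so that $e^{\beta x_1/2}[\D^{\bm{a}},f]g$ becomes a linear combination of products $(\D^{\bm{b}_1}\D^{\bm{b}_2}f)\D^{\bm{c}}(e^{\beta x_1/2}g)$ with $|\bm{b}_2|=1$ and $|\bm{b}_1|+|\bm{c}|\le l$; it then applies the bilinear estimate \eqref{A7} term by term with $\nabla f$ in the role of $f$. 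Your preamble already contains the needed Leibniz commutation fact, so the omission is a matter of bookkeeping rather than a missing idea; nonetheless you should carry out the re-expansion of $e^{\beta x_1/2}\nabla^{l+1-k}g$ in terms of $\nabla^{\bm{c}}\tilde g$ explicitly before invoking the bilinear bound, rather than substituting $\tilde g$ directly into the commutator.
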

\begin{proof}
Following the proofs of Lemmas 4.8 and 4.9 in \cite{Rac},
we first have \er{A3},
\begin{gather}
\|(\D_x^{\bm{a}}f)(\D_x^{\bm{b}}g)\| 
\leq C(\|f\|_{L^\infty}\|g\|_{l}+\|f\|_{l}\|g\|_{L^\infty})
\quad  \text{if $|\bm{a}|+|\bm{b}| \leq l$},
\lb{A7}
\\
\|fg\|_{l} \leq C(\|f\|_{L^\infty}\|g\|_{l}+\|f\|_{l}\|g\|_{L^\infty}).
\lb{A6}
\end{gather}
Let us show \er{A1} by using \er{A6}. It is easy to see that
\[
\|e^{\beta x_1/2}(fg)\|_l \leq C(\|f\|_{L^\infty}\|e^{\beta x_1/2}g\|_{l}
+\|f\|_{l}\|e^{\beta x_1/2}g\|_{L^\infty}).
\]
This together with the equivalence of norms \er{eqiv0} leads to \er{A1}.

For the commutator $[\nabla^{l+1},\, \cdot\,]$, one can obtain easily \er{A4}.
Therefore, we prove only inequality \er{A2}. 
For any $\bm{a}$ with $|\bm{a}|=l+1$, 
\[
e^{\beta x_1/2}[\D^{\bm{a}},f] g =
\sum_{\bm{b} \leq \bm{a}, |\bm{b}|\neq 0} 
C[\bm{a},\bm{b}] (\D^{\bm{b}} f) e^{\beta x_1/2}(\D^{\bm{a}-\bm{b}} g).
\]
Furthermore, it is shown by induction that
\[
e^{\beta x_1/2}(\D^{\bm{a}-\bm{b}} g) =
\sum_{\bm{c} \leq \bm{a}-\bm{b}} C[\bm{c},\beta]\D^{\bm{c}}(e^{\beta x_1/2}g).
\]
Eventually, $e^{\beta x_1/2}[\D^{\bm{a}},f] g$ can be represented 
by a linear combination of terms
\[
(\D^{\bm{b}_1} \D^{\bm{b}_2} f) \D^{\bm{c}}(e^{\beta x_1/2}g),
\]
where $|\bm{b}_2|=1$, $|\bm{b}_1|\leq l$, $|\bm{c}| \leq l$, and $|\bm{b}_1|+|\bm{c}|\leq l$.
Then, applying \er{A7} to these terms, we conclude \er{A2}.
\end{proof}

\begin{lem}\lb{Gell1}
Let $l=0,1,2,\ldots$.
Consider the boundary value problem
\begin{gather*}
-\Delta\sigma +\sigma = \psi \quad \text{in $\Om$},
\\
\sigma=0 \quad \text{on $\D \Om$}.
\end{gather*}
If $\psi \in H^l(\Om)$, 
then this problem has a unique solution $\sigma \in H^{l+2}(\Om)$.
Moreover, there exists a positive constant $C$ independent of $\psi$ and $\sigma$ such that
\begin{equation*}
 \|\sigma\|_{l+2} \leq C\|\psi\|_{l}.
\end{equation*}
\end{lem}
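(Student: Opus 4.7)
The plan is to establish existence and uniqueness first, then obtain the $H^{l+2}$ regularity by flattening the boundary and applying standard elliptic estimates on the half-space $\mathbb{R}^3_+$. For existence and uniqueness with $l=0$, I would apply the Lax--Milgram theorem to the bilinear form $a(u,v) = \int_\Om (\nabla u \cdot \nabla v + uv)\,dx$ on $H^1_0(\Om)$, which is trivially coercive since the zero-order term provides the missing Poincar\'e-type control on this unbounded domain. Testing against $\sigma$ itself yields $\|\sigma\|_1 \leq \|\psi\|_0$ immediately.

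For higher regularity, the natural step is to straighten the graph boundary using the diffeomorphism $\Phi : \Om \to \mathbb{R}^3_+$ defined by $y_1 = x_1 - M(x_2,x_3)$, $y_2 = x_2$, $y_3 = x_3$. Since $M \in \bigcap_k H^k(\mathbb{R}^2) \subset \mathcal{B}^\infty(\mathbb{R}^2)$, the Jacobian is smooth and uniformly bounded with uniformly bounded inverse. Writing $\tilde\sigma(y) = \sigma(\Phi^{-1}(y))$ and $\tilde\psi(y) = \psi(\Phi^{-1}(y))$, the equation becomes a uniformly elliptic second-order equation
\begin{equation*}
-\sum_{i,j=1}^3 a_{ij}(y_2,y_3)\,\partial_{y_i y_j}^2 \tilde\sigma
+\sum_{i=1}^3 b_i(y_2,y_3)\,\partial_{y_i}\tilde\sigma + \tilde\sigma = \tilde\psi
\quad\text{in }\mathbb{R}^3_+,\qquad \tilde\sigma|_{y_1=0}=0,
\end{equation*}
with coefficients in $\mathcal{B}^\infty$ depending only on $\nabla M$ and $\nabla^2 M$, and the ellipticity constant controlled by $\|M\|_{\mathcal{B}^1}$.

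For this half-space problem I would run the standard tangential difference-quotient argument: for tangential directions $y_2, y_3$ one tests the equation against difference quotients of $\tilde\sigma$, obtaining $L^2$ bounds on $\partial_{y_i}\nabla_{y'}\tilde\sigma$ for $i'\in\{2,3\}$, where $\nabla_{y'}=(\partial_{y_2},\partial_{y_3})$. Then the $\partial_{y_1}^2\tilde\sigma$ derivative is recovered algebraically from the equation since $a_{11}$ is bounded below. This yields $\|\tilde\sigma\|_{H^2(\mathbb{R}^3_+)} \leq C\|\tilde\psi\|_{L^2}$, hence the case $l=0$ in the original coordinates. The induction on $l$ then proceeds by differentiating the transformed equation $l$ times in tangential variables, applying the established $H^2$ estimate to the differentiated equation (whose right-hand side lies in $L^2$ by the induction hypothesis and \er{A4}), and finally using the equation to recover $\partial_{y_1}$-derivatives of order up to $l+2$.

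The only mildly delicate point is the unboundedness of $\Om$: since the coefficients $a_{ij}, b_i$ depend on $(y_2,y_3)$ which ranges over all of $\mathbb{R}^2$, one cannot quote a localized boundary-regularity result directly. However, the uniform ellipticity and uniform smoothness of coefficients (guaranteed by $M\in\mathcal{B}^\infty$) make the tangential difference-quotient method go through globally on $\mathbb{R}^3_+$ without modification, and the zero-order term $+\tilde\sigma$ absorbs the absence of Poincar\'e. Pulling back via $\Phi^{-1}$ and using the fact that $\Phi$ and its derivatives are uniformly bounded gives the asserted estimate $\|\sigma\|_{l+2} \leq C\|\psi\|_l$ with $C$ depending only on $\|M\|_{\mathcal{B}^{l+2}}$.
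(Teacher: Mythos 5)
Your proof is correct and follows essentially the same route the paper indicates: the paper simply cites Theorems 4 and 5 of Section 6.3 in Evans \cite{Ev} (Lax--Milgram plus boundary flattening plus tangential difference quotients), and you have reproduced exactly that argument, with the sensible modification that the graph structure of $\partial\Om$ permits a single global flattening map and the uniform bounds from $M\in\cB^\infty$ let the difference-quotient estimates run globally on the unbounded half-space without the localization used in Evans for bounded domains.
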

\begin{proof}
This can be shown in much the same way as Theorems 4 and 5 in Section 6.3 in \cite{Ev}.
\end{proof}
\end{appendix}

\end{document}